\RequirePackage{rotating}
\documentclass[12,ptoneside]{amsart}

\usepackage{amsmath,amstext,amsthm,amssymb,bbm,amsbsy}
\usepackage{mathtools}
\usepackage[numbers]{natbib}
\usepackage{enumerate}
\usepackage{bm}
\usepackage[ruled,linesnumbered]{algorithm2e}
\usepackage{multirow}
\usepackage{tabularx}
\usepackage{makecell}
\usepackage{booktabs}
\usepackage{xcolor}

\usepackage[letterpaper, margin=1in]{geometry}

\usepackage{subfig}
\usepackage{enumitem}
\usepackage{rotating}

\usepackage{amsfonts}
\usepackage{graphicx}
\usepackage{epstopdf}
\usepackage{algorithmic}
\ifpdf
  \DeclareGraphicsExtensions{.eps,.pdf,.png,.jpg}
\else
  \DeclareGraphicsExtensions{.eps}
\fi

\usepackage{cleveref}

\newcommand{\update}[1]{{\color{black} #1}}
\newcommand{\updatenew}[1]{{\color{black} #1}}

\setlist[enumerate]{leftmargin=.25in}
\setlist[itemize]{leftmargin=.25in}

\newtheorem{prop}{Proposition}[section]
\newtheorem{lem}{Lemma}[section]
\newtheorem{remark}{Remark}[section]
\newtheorem{cor}{Corollary}[section]

\DeclareMathOperator*{\argmin}{arg\,min}
\DeclareMathOperator*{\argmax}{arg\,max}

\newcommand{\normiii}[1]{{\left\vert\kern-0.25ex\left\vert\kern-0.25ex\left\vert #1 
    \right\vert\kern-0.25ex\right\vert\kern-0.25ex\right\vert}}

\def\Rn{\mathbb{R}^n}
\def\RN{\mathbb{R}^N}

\def\R{\mathbb{R}}
\def\gmRn{\Gamma_0(\Rn)}
\def\conv{\square}
\def\alplimj{\alpha_{j,\infty}}
\def\vlimj{v_{j,\infty}}
\def\N{\mathbb{N}}
\def\dom{\mathrm{dom}~}
\def\ri{\mathrm{ri\ }}
\def\epi{\mathrm{epi~}}

\begin{document}

\title{On Decomposition Models in Imaging Sciences and Multi-time Hamilton-Jacobi Partial Differential Equations}

\author{J\'er\^ome Darbon}
\address{Department of Applied Mathematics, Brown University, Providence, RI}
\email{jerome\_darbon@brown.edu}
\author{Tingwei Meng}
\address{Department of Applied Mathematics, Brown University, Providence, RI}
\email{tingwei\_meng@brown.edu}

\thanks{The authors are listed in alphabetical order. This work was funded by NSF 1820821}

\maketitle

\begin{abstract}
This paper provides new theoretical connections between multi-time Hamilton-Jacobi partial differential equations and variational image decomposition models in imaging sciences. We show that the minimal values of these optimization problems are governed by multi-time Hamilton-Jacobi partial differential equations. The minimizers of these optimization problems can be represented using the momentum in the corresponding Hamilton-Jacobi partial differential equation.  Moreover, variational behaviors of both the minimizers and the momentum are investigated as the regularization parameters approach zero. In addition, we provide a new perspective from convex analysis to prove the uniqueness of convex solutions to Hamilton-Jacobi equations. Finally, we consider image decomposition models that do not have unique minimizers and we propose a regularization approach to perform the analysis using multi-time Hamilton-Jacobi partial differential equations.
\end{abstract}

\section{Introduction}
In the late 20th century, the Hamilton-Jacobi (HJ) equation was widely studied in the field of partial differential equations (PDEs). To be specific, the solution $S(x,t)$ defined for $x\in\Rn$, $t\geq 0$ satisfies the following Cauchy problem
\begin{equation*}
\begin{cases}
\frac{\partial S(x,t)}{\partial t} + H(x,t,S(x,t),\nabla_x S(x,t)) = 0, &x\in \Rn, t>0;\\
S(x,0) = J(x), &x\in \Rn,
\end{cases}
\end{equation*}
where $H$ is the Hamiltonian and $J$ is the initial data. When the Hamiltonian only depends on the spatial gradient $\nabla_x S(x,t)$, under some regularity and convexity assumptions, the solution is given by the Hopf formula or Lax formula \cite{bardi1984hopf,hopf1965generalized}
\begin{align*}
S(x,t) &= \sup_{p\in \R^n} \langle p, x\rangle - J^*(p) - tH(p) & \text{(Hopf formula)}\\
    &= \inf_{u\in\R^n} J(u) + t H^*\left(\frac{x - u}{t}\right) & \text{(Lax formula)}
\end{align*}
where $J^*$ and $H^*$ are the Legendre transform of the functions $J$ and $H$, respectively. From the physics point of view, HJ PDE describes the movement of a particle in a physics model whose energy function is given by the Hamiltonian $H$. To be specific, the variables $x$ and $t$ are the current position and time of the particle. The characteristic line of the PDE gives the trajectory of the particle. The momentum is given by the spatial gradient $\nabla_x S(x,t)$ which coincides with the maximizer in the Hopf formula. The velocity is given by $\frac{x-u}{t}$ where $u$ is the minimizer in the Lax formula.

We refer the readers to the review paper \cite{crandall1992user} for thorough details and \cite{Darbon1,imbert:2001:jnca} for connections between convex analysis and HJ equations.
An extension of this PDE is to consider the time variable $t$ in a higher dimensional space $\R^N$, in which case the PDE system is called the multi-time Hamilton-Jacobi equation, first discussed by Rochet from an economic point of view \cite{rochet1985taxation}. Later, Lions and Rochet \cite{lions1986hopf} considered the multi-time HJ equations when the Hamiltonians are convex functions which only depend on the momentum. They proposed the generalized Hopf formula by writing it as the composition of several semigroups of the corresponding single-time HJ operators.
Following their work, several existence and uniqueness results \cite{barles2001commutation,cardin2008commuting,motta2006nonsmooth,plaskacz2002oleinik, tho2005hopf} were provided in more general cases, for example, when the Hamiltonians have spatial or time dependence.

It is well known that the HJ equation has a deep relationship with optimal control \cite{bressan2007introduction} and differential games \cite{evans1983differential,souganidis1985max}. 
Later, Darbon \cite{Darbon1} provided a representation formula for the minimizers of a specific kind of optimization problem, which relates the minimizers to the spatial gradients of the solutions to the HJ equations. As we will see below, many models in imaging sciences can be viewed from a perspective of HJ PDEs.
Following that work, we generalize the results to multi-time HJ equations and a larger set of optimization problems, including the decomposition models in image processing.

\bigbreak
In the past few decades, many decomposition models have been proposed in image processing. These models are applied to different practical problems, such as inpainting \cite{Bertalmio2003,ELAD2005340}, image classification \cite{AUJOL20061004}, and road detection \cite{Gilles2010}. Here, we give a brief overview of convex variational models in this area. There are many models that cannot be fully listed here, for which we refer the readers to \cite{chan2005image,gilles2009image}.

The basic idea of image decomposition is to regard an image $x$ as a summation of several components $\{u_j\}$, and solve the following minimization problem:
\begin{equation}\label{eqt:imagemodel}
\argmin_{u_0 + \cdots + u_N = x} f_0(u_0) + \sum_{j=1}^N \lambda_j f_j(u_j).
\end{equation}
Here, each function $f_j$ is designed to characterize the corresponding component $u_j$. One may tune the parameters $\{\lambda_j\}$ to put emphasis on different components. There are many celebrated decomposition models in the literature of imaging sciences.
In the introduction we mention the continuous versions of the models, while later in the main part of this paper we will work with their discrete versions.
The first widely used decomposition model is the Rudin-Osher-Fatemi (ROF) model, proposed in \cite{rudin1992nonlinear}, which applies the total variation (TV) semi-norm and $\|\cdot\|_{L^2}^2$ to recognize the geometry and noise in an image, respectively. 
In the continuous setting, for any function $u\in L^1(\Omega)$ and $\Omega \subset \R^2$, the TV semi-norm of $u$ is defined by
\begin{equation*}
    \|u\|_{TV} := \sup\left\{\int_{\Omega}u(x){\rm div} \phi(x) dx \colon \phi \in C_c^1(\Omega, \R^2), \|\phi\|_{L^\infty} \leq 1\right\}.
\end{equation*}
Here and after in the introduction, the derivatives and divergence are in the distribution sense. The space $BV(\Omega)$ is the space containing all functions of bounded variation, defined by
\begin{equation*}
    BV(\Omega) = \{u\in L^1(\Omega)\colon \|u\|_{TV} < +\infty\}.
\end{equation*}
Under these settings, the ROF model solves the following problem
\begin{equation*}
\argmin_{u\in BV(\Omega)} \|u\|_{TV} + \frac{1}{2\lambda}\|x-u\|_{L^2}^2.
\end{equation*}
The mathematical analysis for the ROF model is provided in \cite{Acar1994Analysis, Allard2008TotalI, Allard2008TotalII, Allard2009TotalIII, aubert.02.book, Brezis2019Remarks, Burger2004Convergence, Casas1999Regularization, Caselles2007Discont, Caselles2011Regularity, Caselles2015Total, Chambolle2010Introduction, Chambolle2016Total, Chambolle1997, Chavent1997Regularization, Choksi2014Remarks, Dobson1996Analysis, Hintermuller2018Function, HINTERMULLER2017Analytical, Nikolova2004Weakly, Ring2000Structural, Valkonen2015Jump, Vogel2002Computational}.
Later, Meyer \cite{meyer2001oscillating} pointed out the disadvantage of $\|\cdot\|_{L^2}^2$ in capturing oscillating patterns. In order to overcome this disadvantage, he suggested using the norm in either of the three spaces $E,F,G$ to replace it, \update{where these three spaces are defined as follows. We use the notations of Meyer to describe these spaces \cite{meyer2001oscillating}. First, define the space of functions of bounded mean oscillation ($BMO$) by
\begin{equation*}
    \begin{split}
        BMO:= \left\{f\in L_{loc}^1(\R^n):\ \sup\left\{\frac{1}{|Q|}\int_Q \left|f(x) - f_Q\right|dx:\ Q \text{ is any ball in }\R^n\right\} <+\infty\right\}\\
        \text{ where the symbol }f_Q\text{ is defined by } f_Q:= \frac{1}{|Q|}\int_Qf(x) dx,
    \end{split}
\end{equation*}
and the homogeneous Besov space $\dot{B}_1^{1,1}$ by
\begin{equation*}
\begin{split}
    \dot{B}_1^{1,1} := \Big\{f\in L^{\frac{n}{n-1}}(\R^n):\ &\sum_{j\in \mathbb{Z}}\sum_{k\in \mathbb{Z}^n} |c(j,k)| 2^{j(1-n/2)} < +\infty, \\
    &\text{ where } \{c(j,k)\} \text{ are the wavelet coefficients of } f \Big\}.
    \end{split}
\end{equation*}
Let $\dot{B}_\infty^{-1,\infty}$ be the dual space of $\dot{B}_1^{1,1}$.}
Then, define $E, F, G$ by $E := \dot{B}_\infty^{-1,\infty}$, $F := div(BMO)$ and $G:= div(L^\infty)$.
To be specific, the space $G$ and $G-$norm are defined as follows
\begin{equation*}
\begin{split}
    &G:= \{f = \partial_1 g_1 + \partial_2 g_2:\ g_1,g_2\in L^\infty(\R^2)\},\\
    &\|f\|_G: = \inf\{\|(g_1^2+g_2^2)^{1/2}\|_{L^\infty}:\ f = \partial_1 g_1 + \partial_2 g_2\}.
\end{split}
\end{equation*}
The space $F$ is similarly defined by replacing the space $L^\infty$ in the above definition with the $BMO$ space. 
\update{The corresponding models proposed by Meyer are stated as follows}
\begin{equation}\label{eqt:intro_meyer}
    \begin{split}
        &\update{\argmin_{u\in BV(\Omega)} \|u\|_{TV} + \lambda \|x-u\|_X, \quad \text{where the space $X$ can be $E$, $F$ or $G$.}}
    \end{split}
\end{equation}
For mathematical analysis of these models, we refer the readers to \cite{GARNETT200725,Gilles2010,Le2005BMO}. In \cite{GARNETT200725}, the space $E$ is also generalized to any homogeneous Besov space $\dot{B}_p^{\alpha-2,q}$, where $p,q\in[1,+\infty]$ and $\alpha \in (0,2)$.
However, Meyer's models are hard to solve numerically. There are mainly two approaches to numerically solve the model with $G-$norm. The first approach is approximating $L^\infty$ in the definition of $G$ by $L^p$ \cite{Vese2004}.
Osher et al. \cite{OSV} proposed an equivalent formulation called OSV when $p=2$. In a word, OSV uses the square of $H^{-1}-$norm instead of $G-$norm. 
\update{To be specific, the OSV model solves}
\begin{equation*}
\update{
    \argmin_{u\in BV(\Omega)} \|u\|_{TV} + \lambda \int_\Omega \left|\nabla (\Delta)^{-1}(x-u)\right|^2 dxdy.}
\end{equation*}
The other approach called $A^2BC$ model is proposed by Aujol et al. \cite{aujol2003image,aujol2005image}, replacing the $G-$norm with the indicator function of balls in the space $G$. \update{In other words, it solves the following problem
\begin{equation}\label{eqt:intro_a2bc}
    \argmin_{u\in BV(\Omega)} \|u\|_{TV} + I\{ \|x-u\|_G\leq \mu\},
\end{equation}
where $I\{\cdot\}$ denotes the indicator function whose definition will be given in \cref{sec:bkgd}.
It is shown that this $A^2BC$ model gives the solution to Meyer's model \cref{eqt:intro_meyer} with $X = G$ when the parameter $\mu$ is appropriately chosen. 
In practice, they use a Moreau-Yosida type approximation and solve the following problem instead
\begin{equation}\label{eqt:intro_a2bc2}
    \argmin_{u\in BV(\Omega),\ v\in G} \|u\|_{TV} + I\{ \|v\|_G\leq \mu\} + \frac{1}{2\lambda} \|x-u-v\|_{L^2}^2, 
\end{equation}
This regularized model converges to \cref{eqt:intro_a2bc} as the parameter $\lambda$ approaches zero.}
Moreover, it is easy to implement using Chambolle's projection method \cite{Chambolle2004}. 
Similarly, in \cite{aujol2005dual}, the indicator function of the $E-$ball is used to replace the $E-$norm, \update{which provides a similar numerical implementation approach to the Meyer's model \cref{eqt:intro_meyer} with $X = E$}.

In the above models, an image is decomposed into a geometrical part and an oscillating part. However, for a noisy image, the oscillating part may contain both the texture in the original image and the noise. To split these two parts, a $u+v+w$ model is proposed in \cite{aujol2005dual}, which constrains the $G-$norm of the texture part and the $E-$norm of the noisy part. Later, Gilles \cite{Gilles2007} modified the $u+v+w$ model with a coefficient assigned to each pixel to smoothly indicate whether it is in texture or noise. He also modified the $A^2BC$ model by requiring the $G-$norm of the noise to be much smaller than the $G-$norm of the texture.
In \cite{AUJOL2006916,Duval2009,Duval2010}, the authors extended some of the abovementioned models, which are originally proposed for gray-scale images, to color images.
Besides, there are many other functions used in image decomposition. For example, the $L^1-$norm \cite{Alliney1997,aujol2006structure,Chan2005Aspects,Nikolova2004} is used to promote sparsity or remove salt and pepper noise. In \cite{Aujol2006Constrained,aujol2006structure}, the quadratic form $\langle \cdot, K \cdot\rangle$, where $K$ is a linear symmetric positive operator, is used for adaptive kernel selection of the texture component. Note that this quadratic form generalizes the $L^2$ term in ROF and the $H^{-1}$ term in OSV.

The previous work \cite{Darbon1} clarifies the relationship between single-time HJ equations and decomposition models with two terms (i.e. $N=1$ in \cref{eqt:imagemodel}), such as the ROF model, Meyer's models and some of their variations.
However, as mentioned above, there are many other models handling three or more components. Also, in practice, one may modify a model by adding a quadratic term for numerical consideration, \update{such as in \cref{eqt:intro_a2bc2}}. This kind of modification is applied to most of the above models. As a result, the objective function in the numerical implementation actually contains three or more terms. 
On the other hand, new models can be constructed by regarding the functions mentioned above as building blocks and combining them together. For instance, the morphological component analysis \cite{Fadili:2010:ieee,Starck2004,Starck2005} combines ROF model and $L^1$ minimization for the coefficients with respect to two sets of dictionaries chosen for the representation of texture and geometry. Another example is \cite{CHAN2007464}, which adds a higher order term $\alpha \|\Delta v\|_{L^2}^2$ to the models introduced above, in order to reduce the staircase effect.
\update{Actually, the higher order terms in image processing are widely studied in the literature. Two important models are the TV-TV$^2$ infimal convolution model \cite{Chambolle1997} and the Total Generalized Variation (TGV) model \cite{Bredies2010Total}. In fact, after discretization, the higher order linear operators are discretized using some matrices. In other words, the results in this paper can be applied to the discrete models with higher order terms by regarding them as matrix multiplication. In conclusion, }it is valuable to generalize the previous work \cite{Darbon1} and provide a framework to analyze the models involving more than two components. \update{Also, our proposed framework is suitable for a large class of discrete decomposition models in imaging sciences, even including some models containing higher order terms.}

\bigbreak

Now, we briefly introduce the intuition and the basic setup for our framework and demonstrate the idea using some experimental results of the \update{discrete} $A^2BC$ model. In general, for a discrete decomposition model \cref{eqt:imagemodel}, an image is regarded as a vector $x\in \Rn$, where $n$ is the number of pixels.
If we can relate each $f_j$, $j\geq 1$, to a Hamiltonian and $f_0$ to an initial function, then the minimal value, regarded as a function of the input data $x$ and the parameters $\{\lambda_j\}$, relates to the solution of the corresponding multi-time HJ equation. Here, the parameters $\{\lambda_j\}$ are regarded as time variables. 

\begin{figure}[htbp]
\centering
\subfloat[]{\label{fig:stripa}\includegraphics[width = 0.3\textwidth]{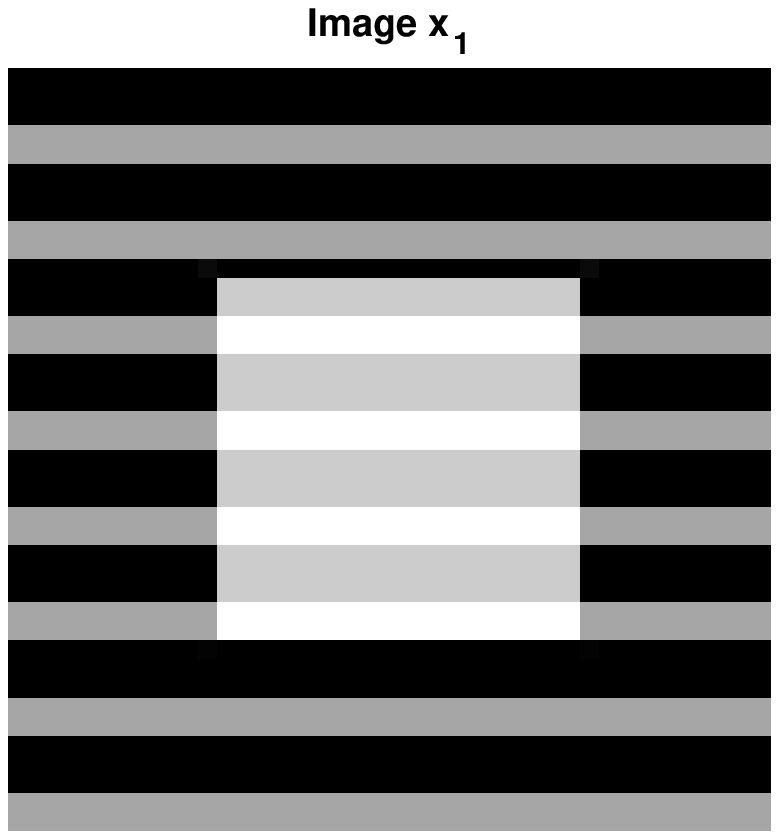}}
\subfloat[]{\label{fig:stripb}\includegraphics[width = 0.3\textwidth]{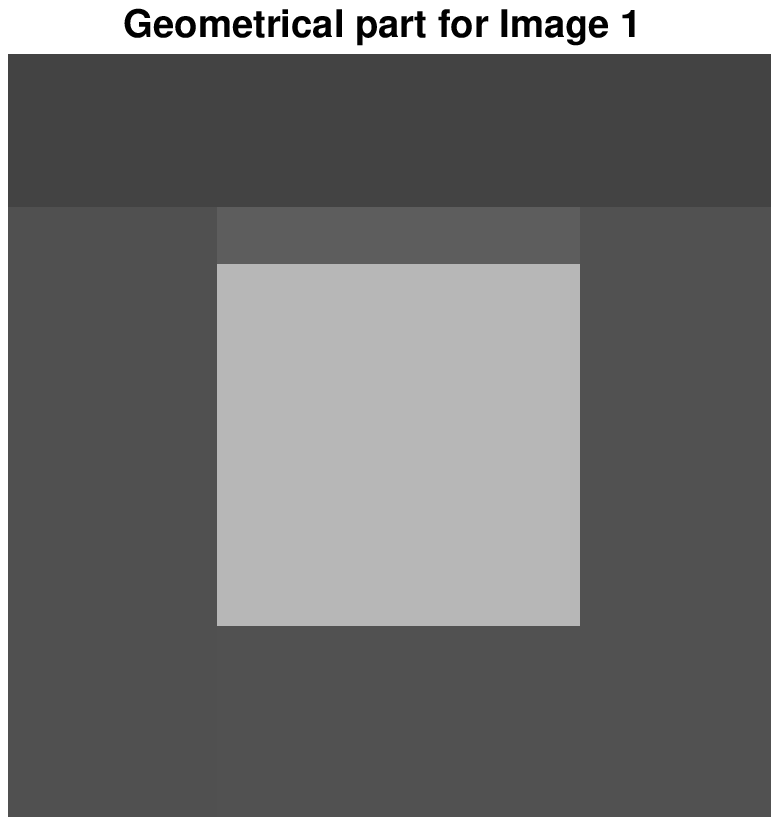}}
\subfloat[]{\label{fig:stripc}\includegraphics[width = 0.3\textwidth]{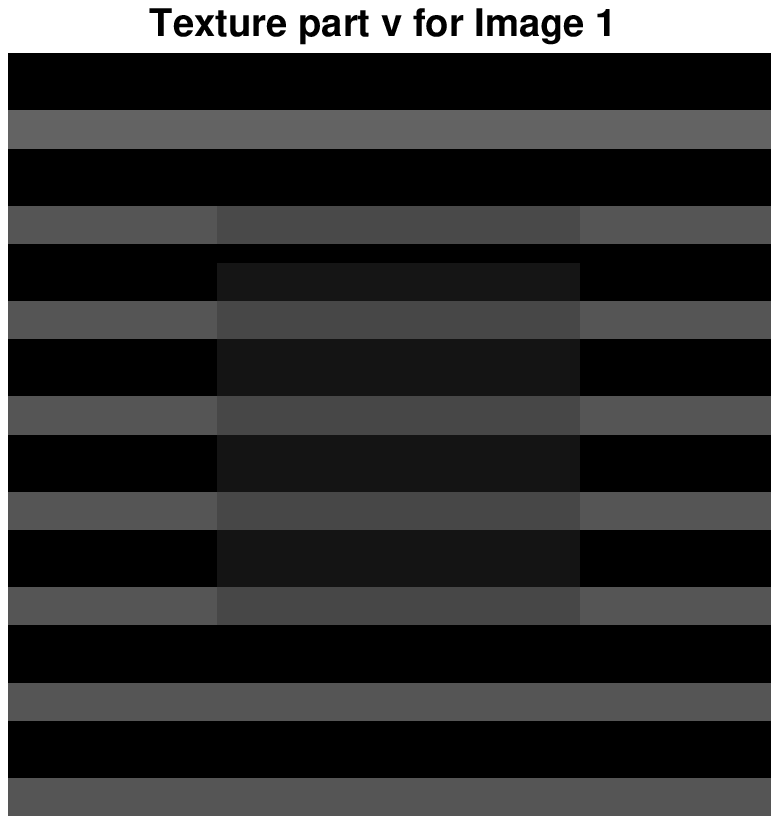}}\\
\subfloat[]{\label{fig:stripd}\includegraphics[width = 0.3\textwidth]{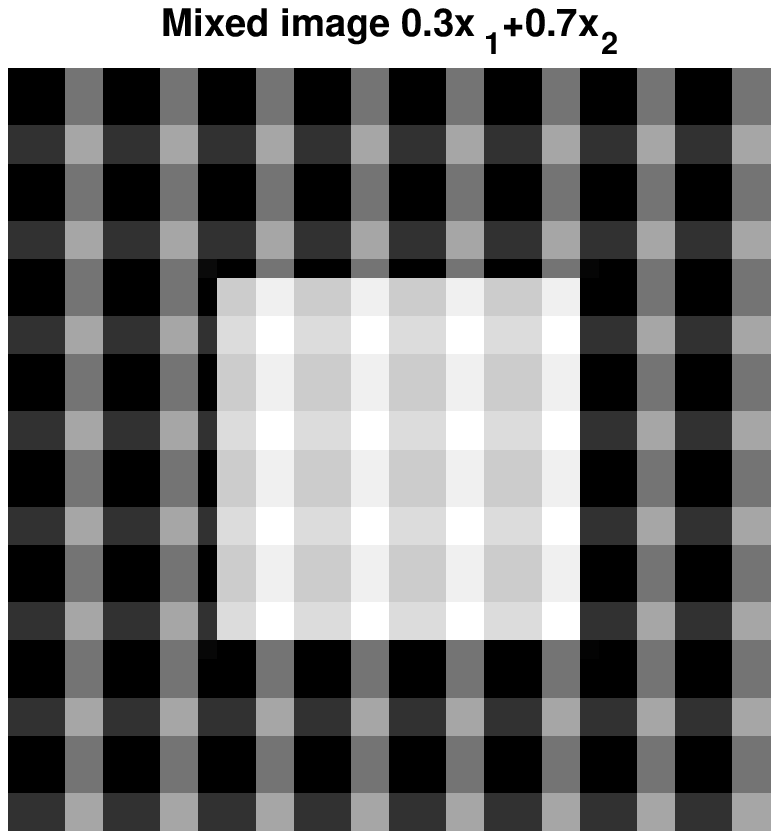}}
\subfloat[]{\label{fig:stripe}\includegraphics[width = 0.3\textwidth]{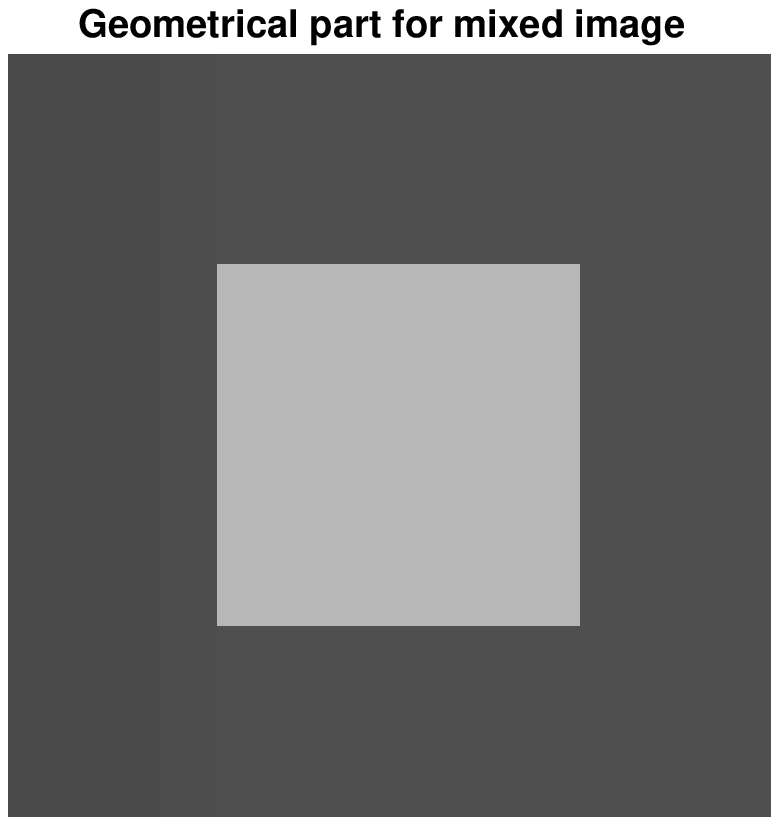}}
\subfloat[]{\label{fig:stripf}\includegraphics[width = 0.3\textwidth]{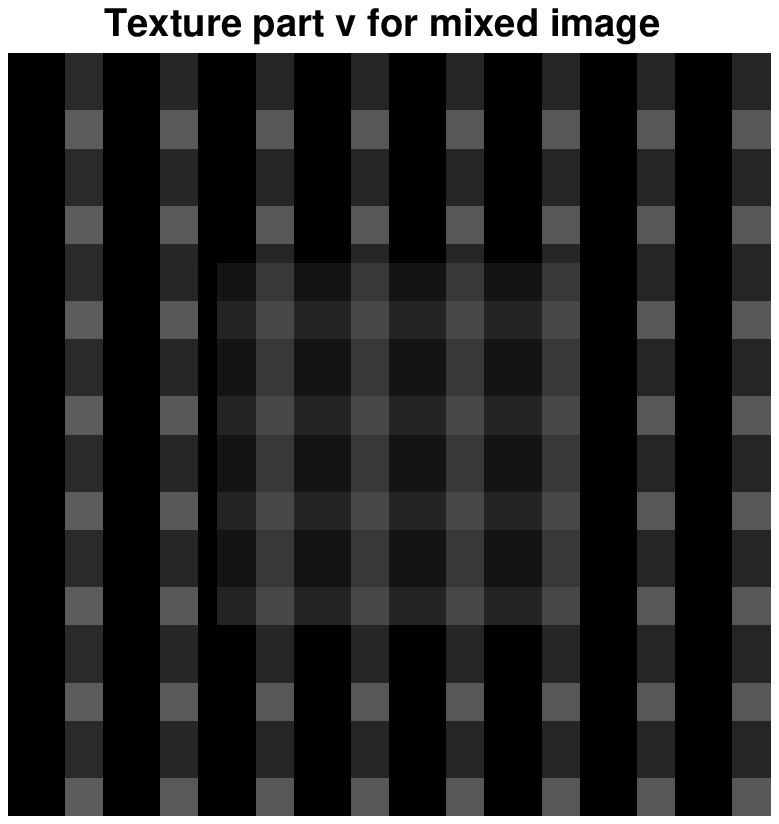}}
\caption{The $A^2BC$ model is applied to an artificial image. The original image $x_1$ and the corresponding minimizers $u,v$ are shown in (a)-(c). The convex combination $0.3 x_1 + 0.7 x_2$ of $x_1$ and its rotation $x_2$ is shown in (d), whose minimizers are shown in (e)-(f).}
\label{fig:strip}
\end{figure}

For example, the \update{discrete} $A^2BC$ model solves the following optimization problem:
\begin{equation}\label{eqt:AABFC}
S(x, \mu, \lambda) := \min_{u,v\in \Rn} J(u) + J^*\left(\frac{v}{\mu}\right) + \frac{1}{2\lambda} \|x-u-v\|_2^2.
\end{equation}
The desired quantities are the minimizers, denoted as $u(x, \mu, \lambda)$ and $v(x, \mu, \lambda)$.
\update{Here, the discrete total variation semi-norm $J:\ \R^{m_1\times m_2}\to \R$ is defined as follows
\begin{equation}\label{eqt:discreteTV}
    J(u):= \sum_{i=1}^{m_1-1}\sum_{j=1}^{m_2-1} |u_{i+1,j} - u_{i,j}| + |u_{i,j+1} - u_{i,j}|.
\end{equation}
\updatenew{In this paper, we identify the space $\R^{m_1\times m_2}$ containing all matrices with $m_1$ rows and $m_2$ columns with the Euclidean space $\R^n$ where $n=m_1m_2$.
The discrete total variation $J$ defined above is the anisotropic version, which will be used in this paper.}
Its Legendre transform $J^*$ is the indicator function of the unit ball in the dual space. \updatenew{To be specific, let $\|\cdot\|_G$ be the dual norm of $J$, which is given by 
\begin{equation*}
\begin{split}
    \|v\|_G = \inf\Biggl\{&\sup_{\substack{1\leq i\leq m_1\\ 1\leq j\leq m_2}} \sqrt{(g_{i,j})^2 + (h_{i,j})^2} \colon \quad v_{i,j} = g_{i,j} - g_{i-1,j} + h_{i,j} - h_{i, j-1}, \quad \\
    &\quad \quad g_{0,j} = g_{m_1,j} = h_{i,0} = h_{i,m_2}=0, \  g_{i,j}, h_{i,j}\in \R \quad \forall \ 1\leq i\leq m_1, 1\leq j\leq m_2\Biggr\}.
\end{split}
\end{equation*}
Then, we have $J^*(p) = I\{\|p\|_G\leq 1\}$ for any $p\in \R^n$ where $I\{\cdot\}$ denotes the indicator function.} Notice that any indicator function is invariant under multiplication with a positive constant, then we have $\mu J^* = J^*$.}
Hence, the above optimization problem is equivalent to 
\begin{equation*}
S(x, \mu, \lambda) = \min_{u,v\in \Rn} J(u) + \mu J^*\left(\frac{v}{\mu}\right) + \frac{\lambda}{2} \left\|\frac{x-u-v}{\lambda}\right\|_2^2.
\end{equation*}
We shall see that such a representation for $S$ will allow us to show that $S$ satisfies the following multi-time HJ equation
\begin{equation*}
    \begin{cases}
    \frac{\partial S(x, \mu, \lambda)}{\partial \mu} + J(\nabla_x S(x, \mu, \lambda)) = 0, &x\in\Rn, \mu>0, \lambda>0;\\
    \frac{\partial S(x, \mu, \lambda)}{\partial \lambda} + \frac{1}{2}\|\nabla_x S(x, \mu, \lambda)\|_2^2 = 0, &x\in\Rn, \mu>0, \lambda>0;\\
    S(x,0,0) = J(x), &x\in \Rn.
    \end{cases}
\end{equation*}

\begin{figure}[tbp!]
    \centering
    \subfloat[]{\label{fig:stripg}\includegraphics[width = 0.5\textwidth]{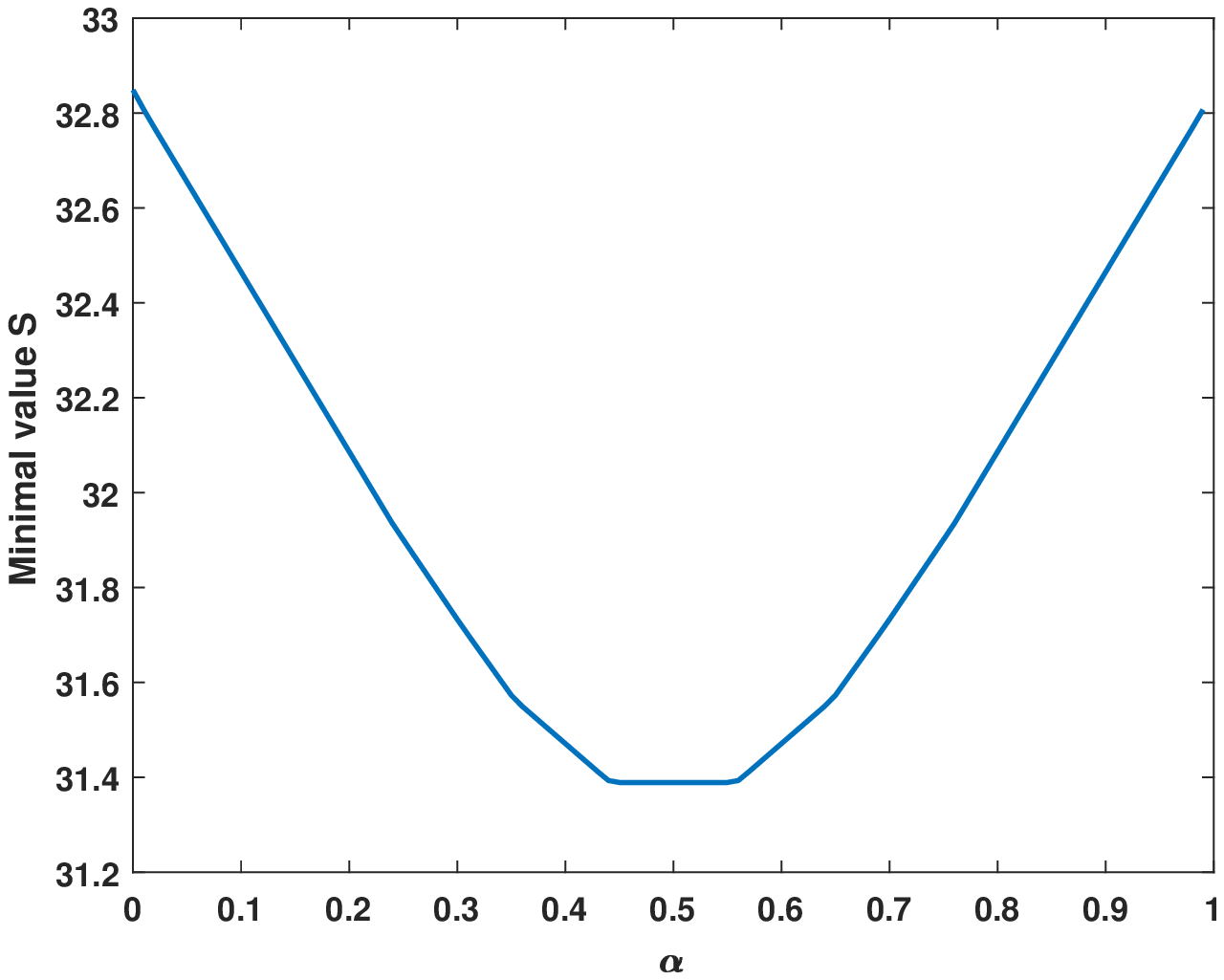}}
\subfloat[]{\label{fig:striph}\includegraphics[width = 0.5\textwidth]{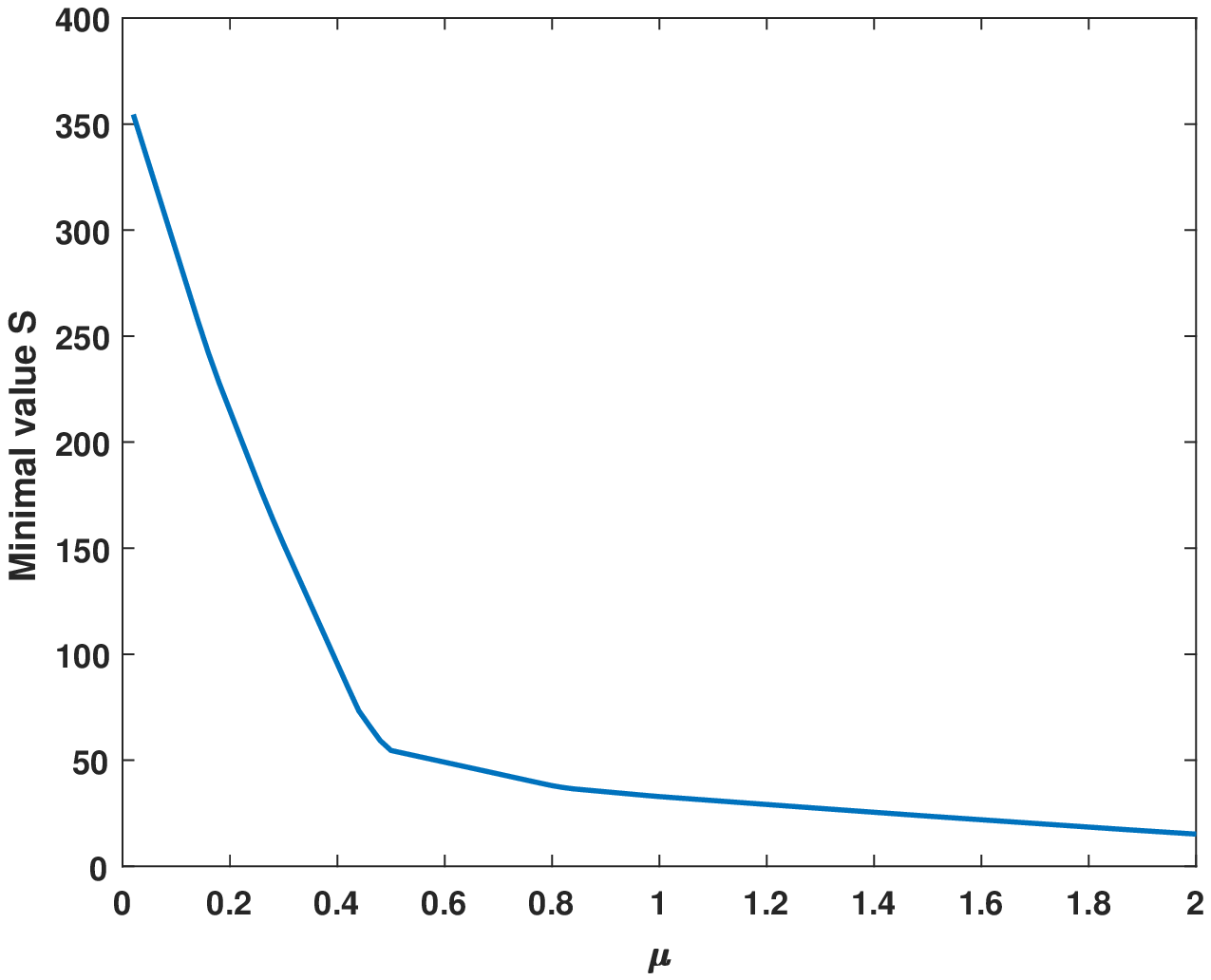}}\\
\subfloat[]{\label{fig:stripi}\includegraphics[width = 0.5\textwidth]{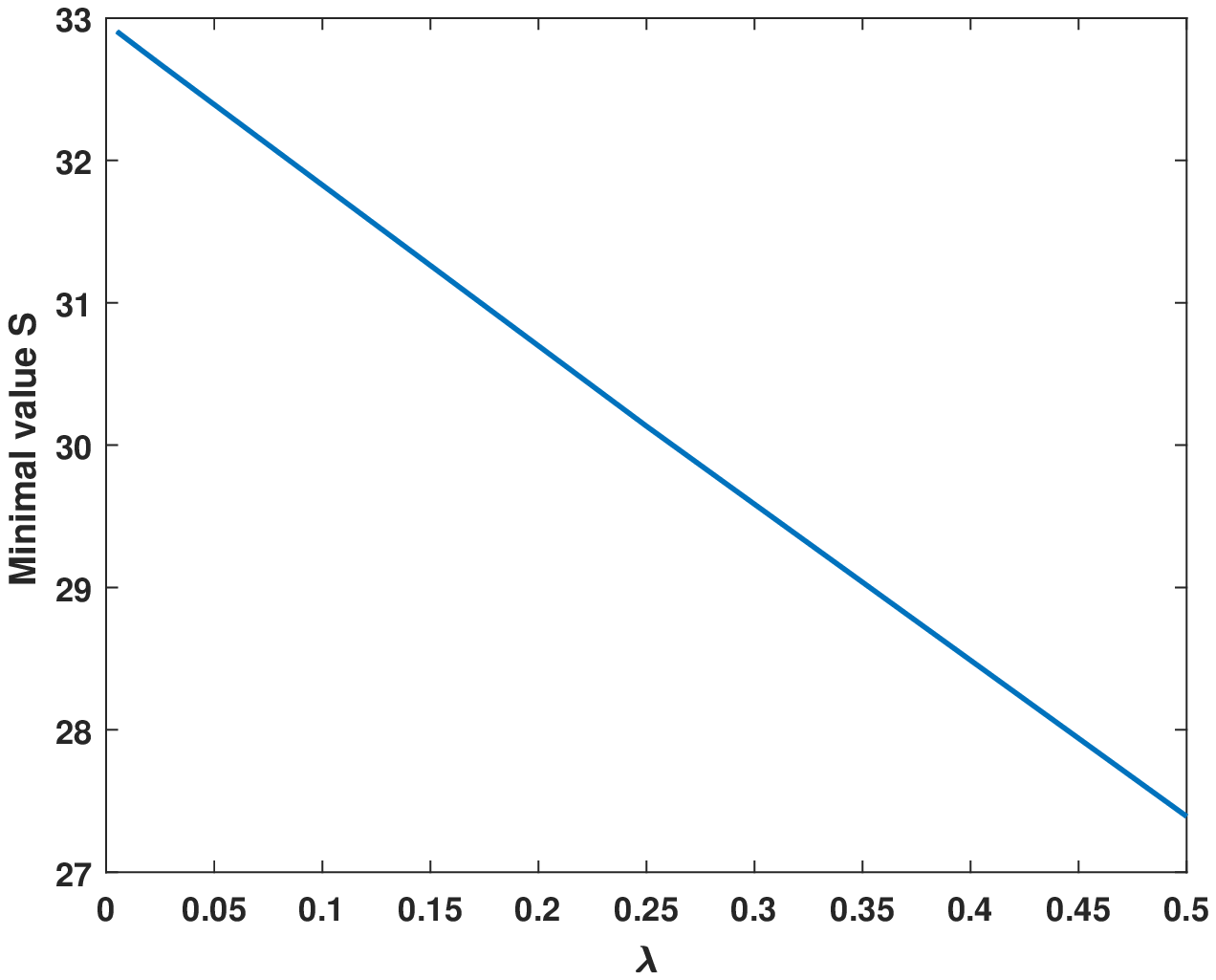}}
    \caption{The graphs of the minimal values $S$ with respect to the variables $\alpha$, $\mu$ and $\lambda$ in the first example are shown in (a)-(c), respectively. \updatenew{To be specific, (a) shows the function $\alpha \mapsto S(\alpha x_1 + (1-\alpha) x_2, \alpha \mu_1 + (1-\alpha) \mu_2, \alpha \lambda_1 + (1-\alpha) \lambda_2)$, 
    (b) shows the function $\mu\mapsto S(x_1, \mu, \lambda_1)$, and (c) shows the function $\lambda \mapsto S(x_1, \mu_1, \lambda)$.}}
    \label{fig:strip2}
\end{figure}

\begin{figure}[htbp!]
\centering
\subfloat[]{\label{fig:noisy1}\includegraphics[width = 0.3\textwidth]{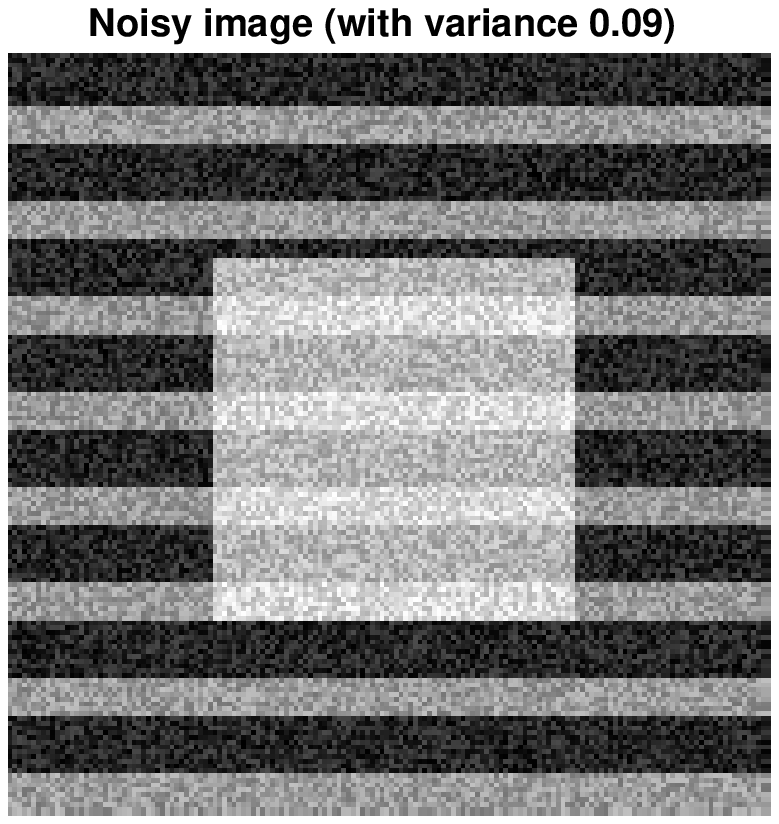}}
\subfloat[]{\label{fig:noisy2}\includegraphics[width = 0.3\textwidth]{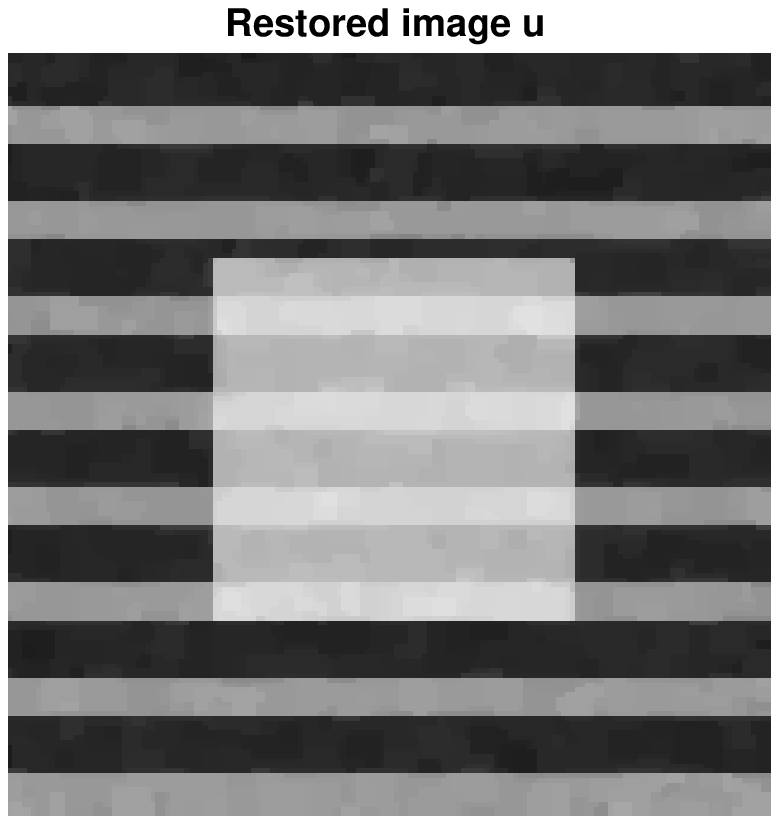}}
\subfloat[]{\label{fig:noisy3}\includegraphics[width = 0.3\textwidth]{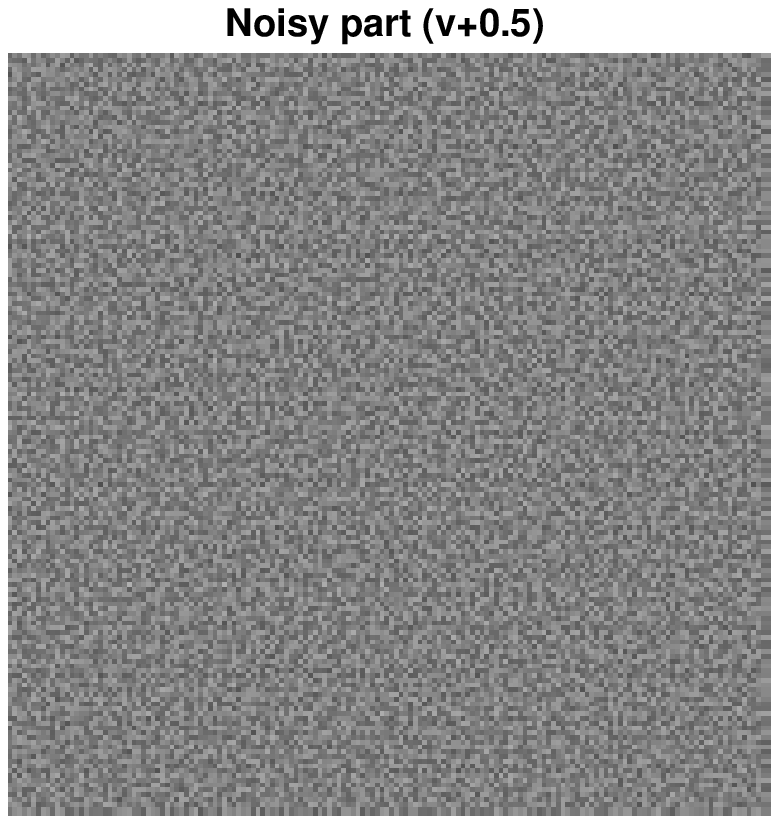}}
\caption{The $A^2BC$ model is applied to the noisy test image shown in (a). The corresponding minimizers $u$ and $v$ are shown in (b) and (c), respectively.}
\label{fig:noisy_strip}
\end{figure}

\begin{figure}[htbp!]
\centering
\subfloat[]{\label{fig:barbara_full1}\includegraphics[width = 0.3\textwidth]{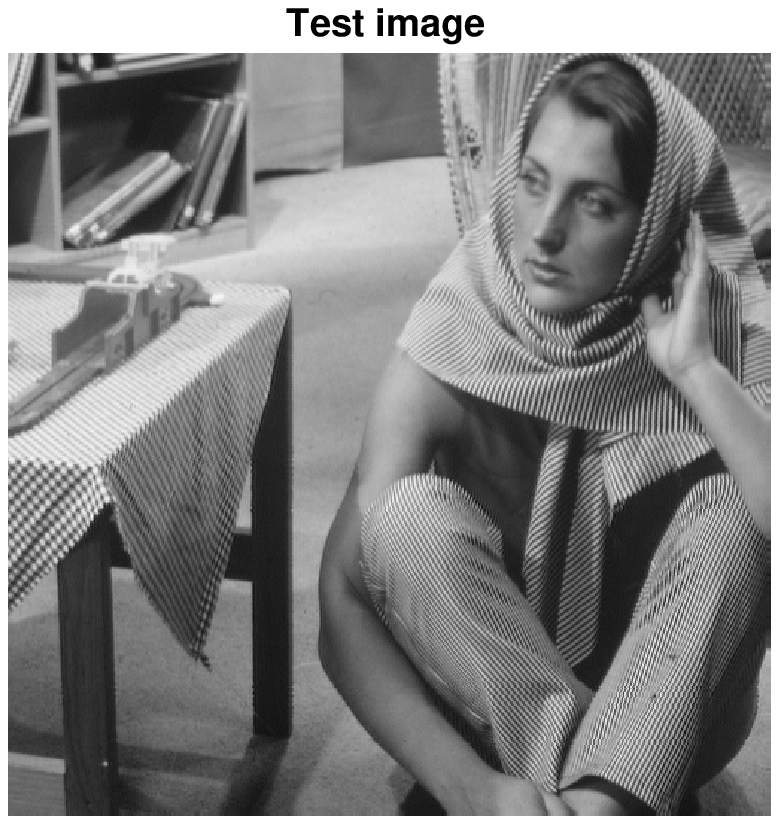}}
\subfloat[]{\label{fig:barbara_full2}\includegraphics[width = 0.3\textwidth]{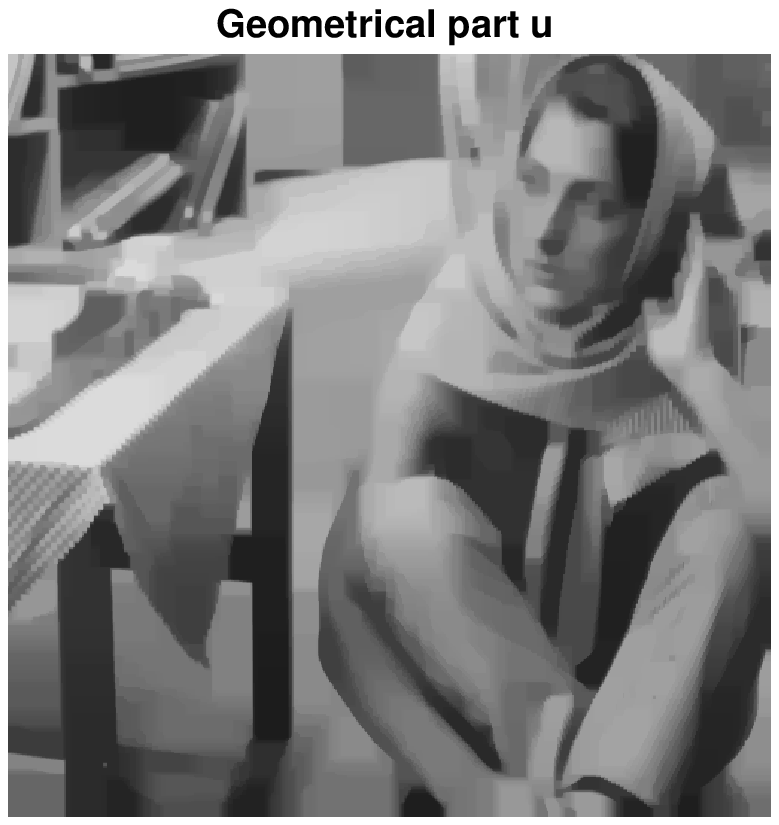}}
\subfloat[]{\label{fig:barbara_full3}\includegraphics[width = 0.3\textwidth]{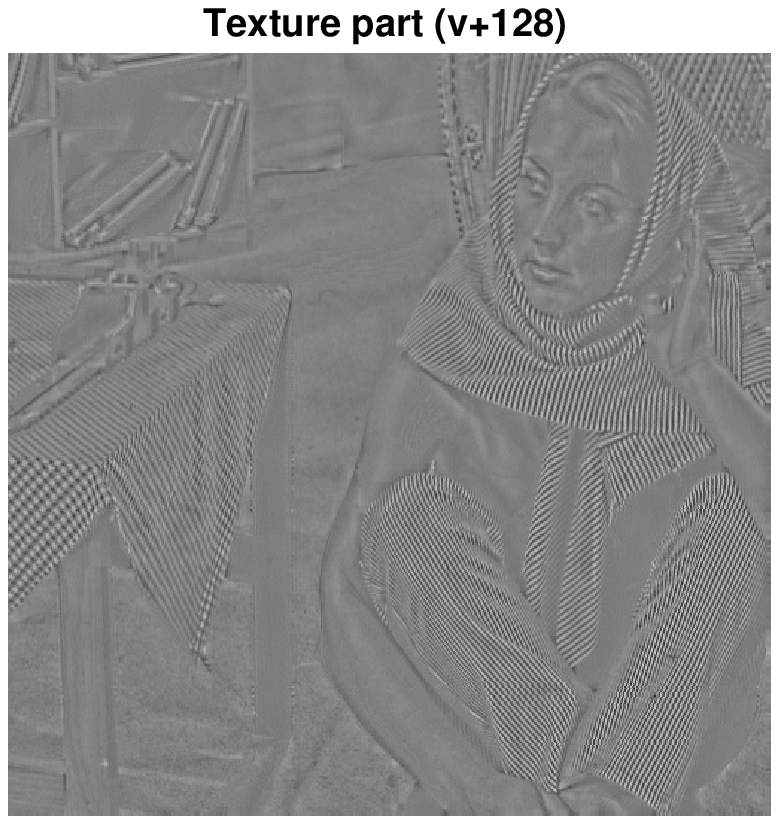}}
\caption{The $A^2BC$ model is applied to the test image ``Barbara". The original image is shown in (a). The corresponding minimizers $u$ and $v$ are shown in (b) and (c), respectively.}
\label{fig:barbara_full}
\end{figure}

In \cref{fig:strip,fig:strip2,fig:noisy_strip,fig:barbara_full,fig:barbara,fig:barbara2}, the minimizers $u,v$ and the minimal values $S$ for the corresponding input images are shown. 
To compute the minimizers, we apply a splitting algorithm to convert the optimization problem (\ref{eqt:AABFC}) to two subproblems involving computing the proximal point of $\lambda J$ and computing the projection to a $\mu-$ball of Meyer's norm. The second subproblem is the dual problem to the first one. As a result, for both subproblems, we can apply the algorithm in \cite{chambolle.09.ijcv,darbon2006image,hochbaum.01.jacm} to obtain the exact minimizers.

\begin{figure}[htbp!]
\centering
\subfloat[]{\label{fig:barbaraa}\includegraphics[width = 0.3\textwidth]{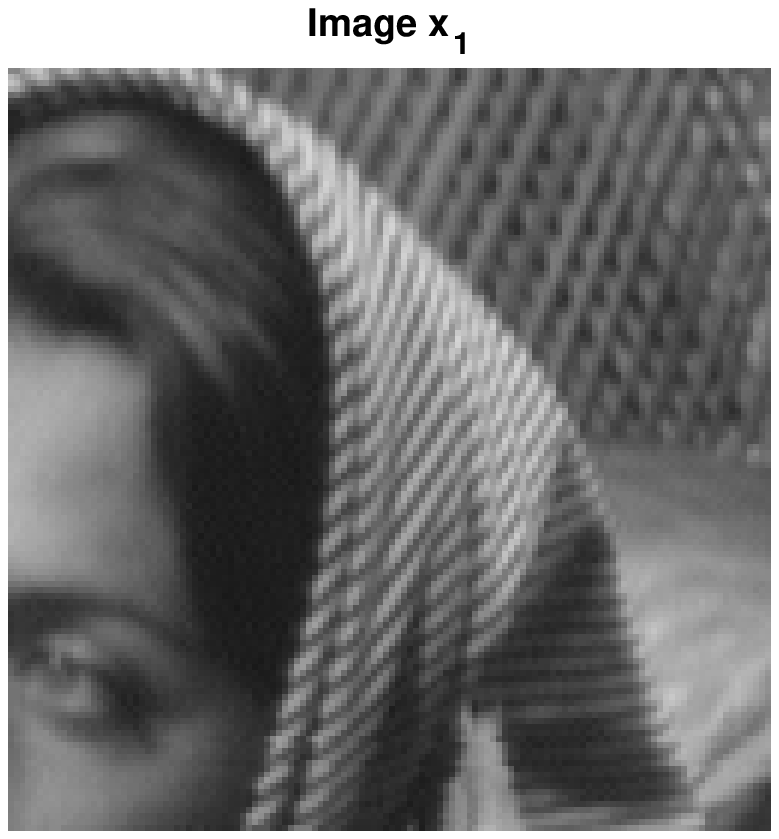}}
\subfloat[]{\label{fig:barbarab}\includegraphics[width = 0.3\textwidth]{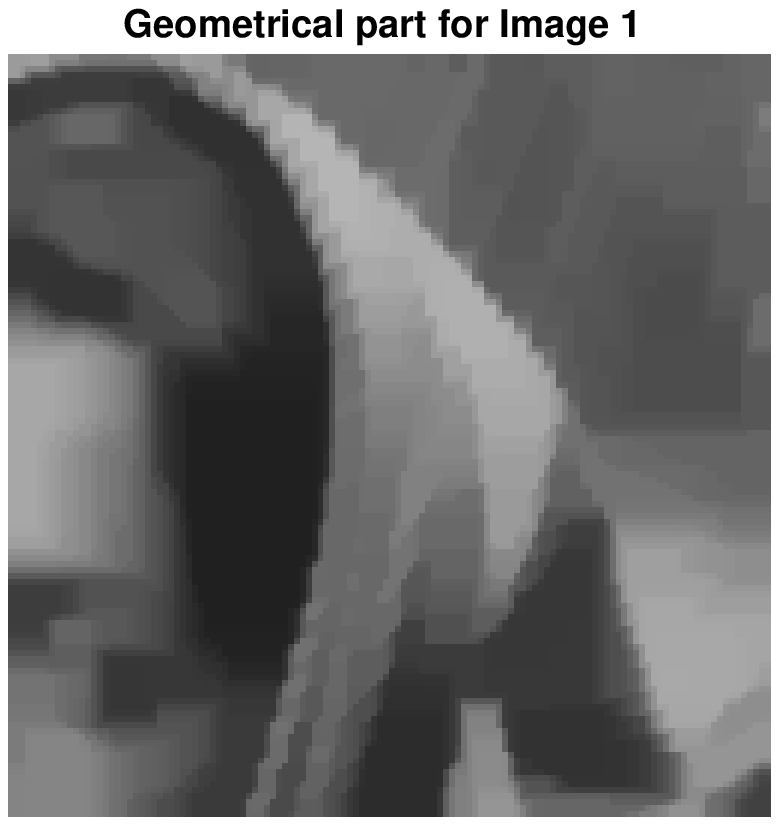}}
\subfloat[]{\label{fig:barbarac}\includegraphics[width = 0.3\textwidth]{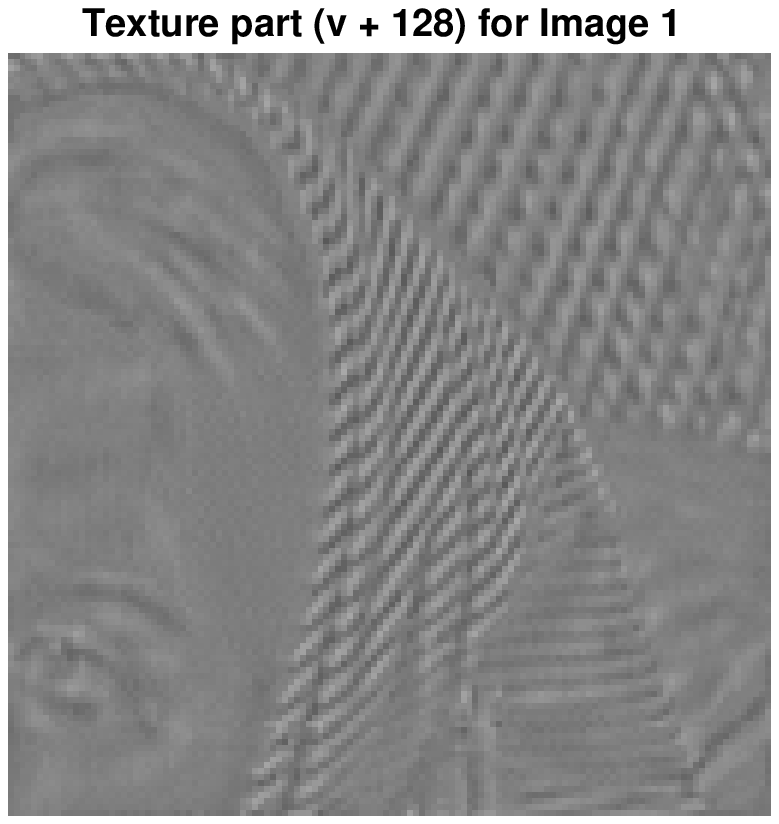}}\\
\subfloat[]{\label{fig:barbarad}\includegraphics[width = 0.3\textwidth]{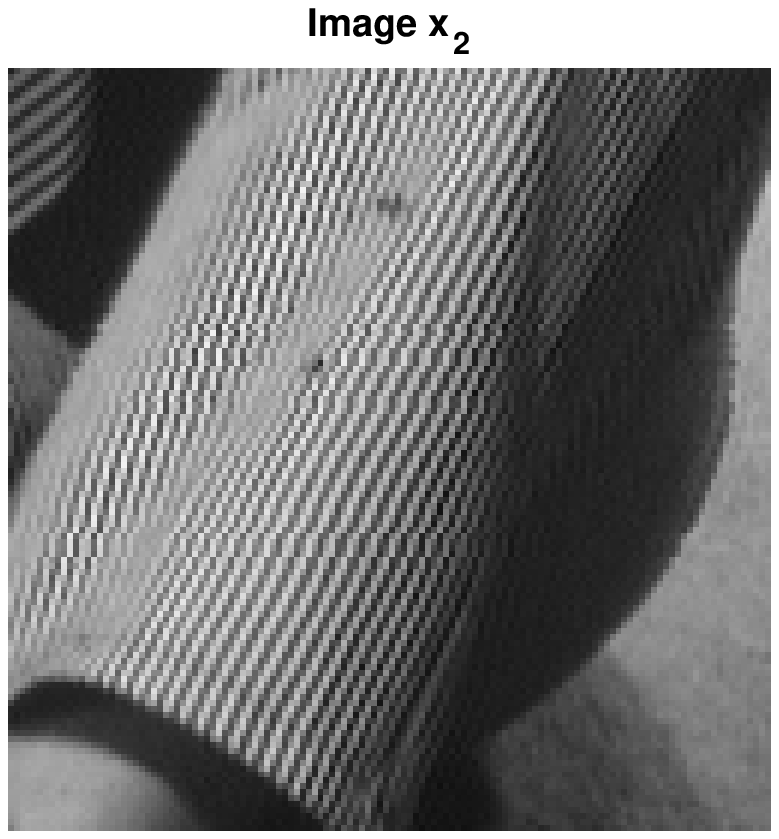}}
\subfloat[]{\label{fig:barbarae}\includegraphics[width = 0.3\textwidth]{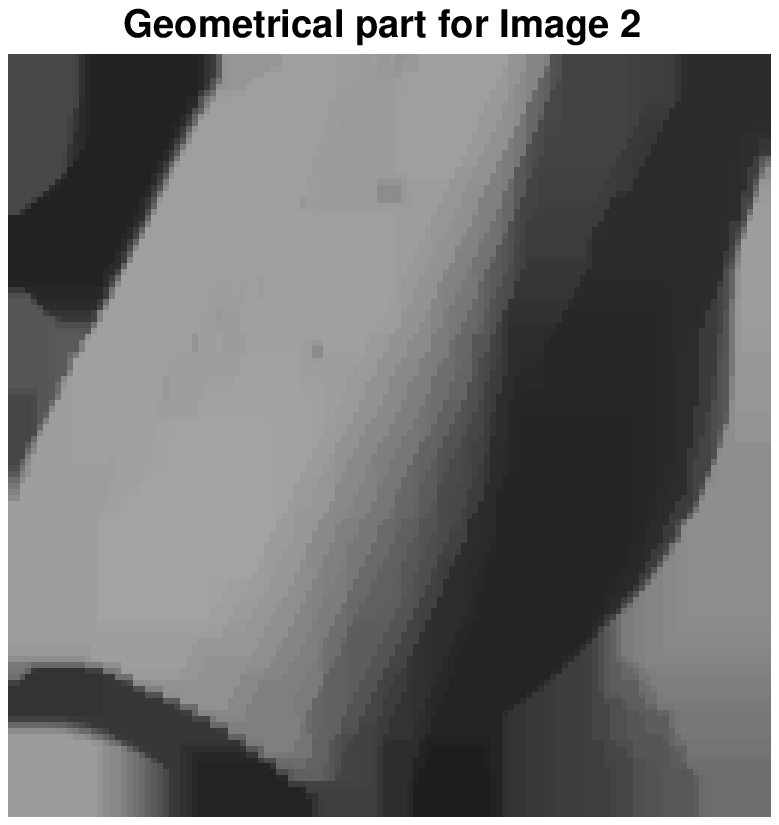}}
\subfloat[]{\label{fig:barbaraf}\includegraphics[width = 0.3\textwidth]{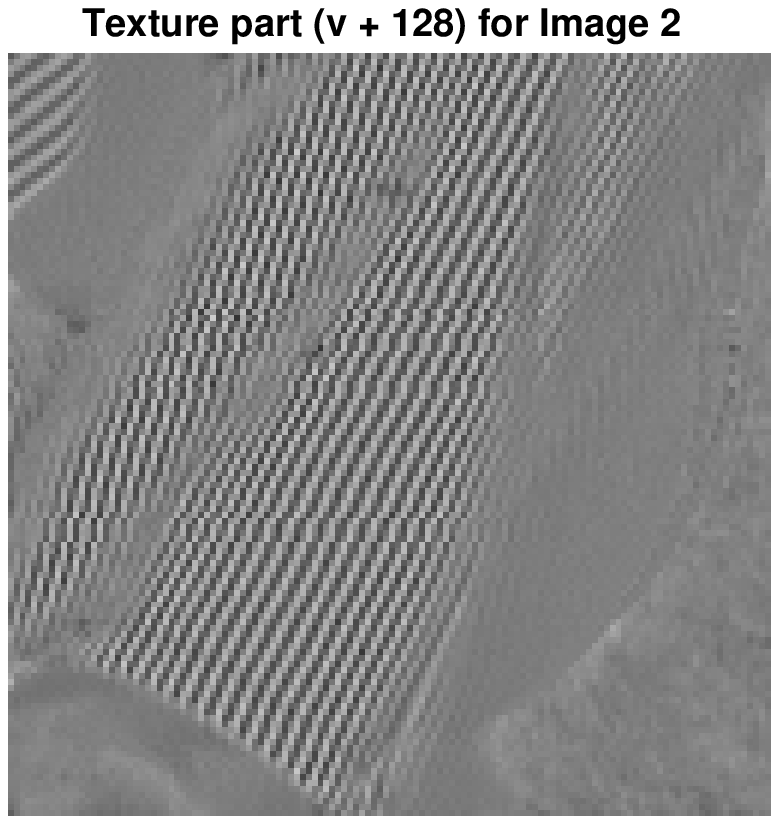}}\\
\subfloat[]{\label{fig:barbarag}\includegraphics[width = 0.3\textwidth]{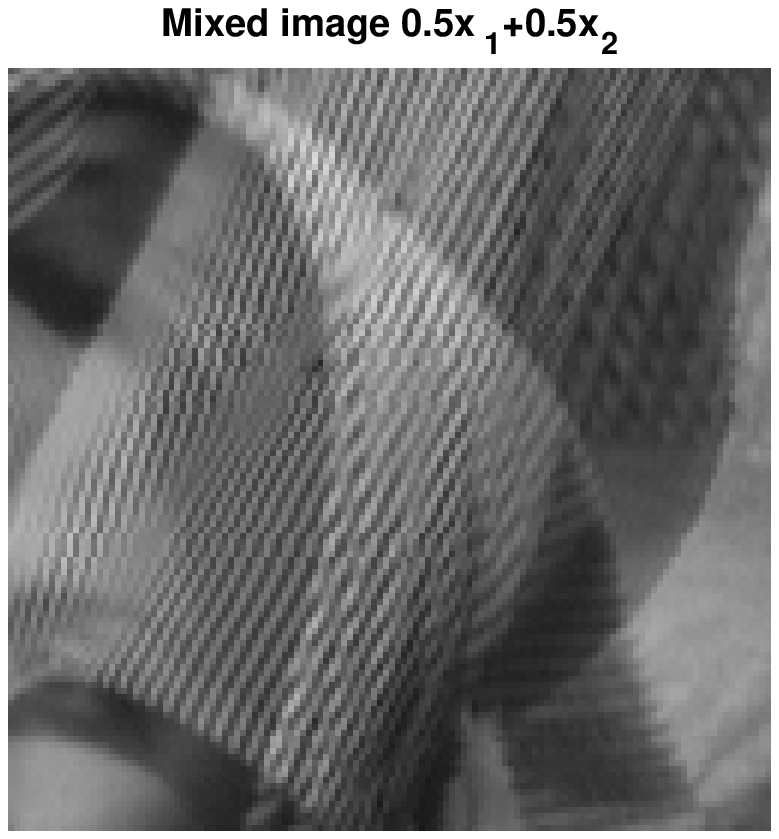}}
\subfloat[]{\label{fig:barbarah}\includegraphics[width = 0.3\textwidth]{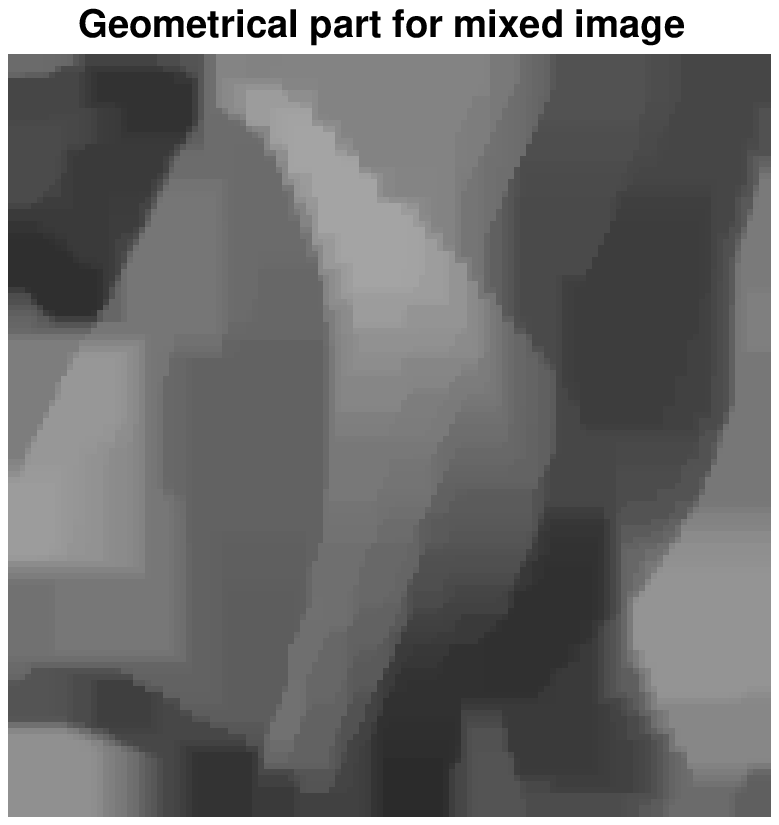}}
\subfloat[]{\label{fig:barbarai}\includegraphics[width = 0.3\textwidth]{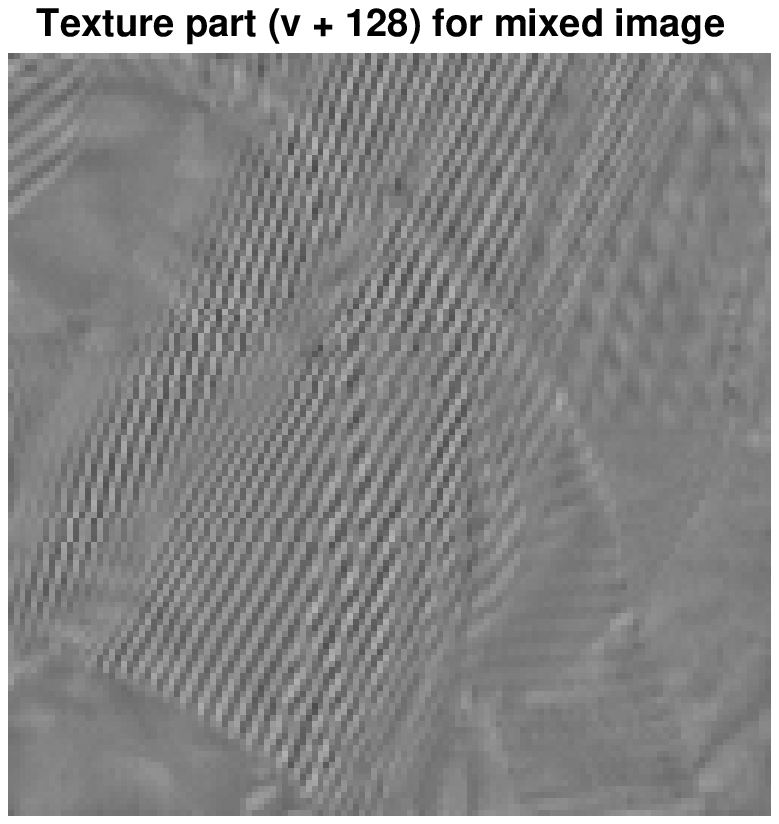}}
\caption{The $A^2BC$ model is applied to two parts of the image ``Barbara". The original image $x_1$, $x_2$ and corresponding minimizers $u,v$ are shown in (a)-(f). The convex combination $0.5 x_1 + 0.5 x_2$ and its minimizers are shown in (g)-(i).}
\label{fig:barbara}
\end{figure}

In the first example, the test image $x_1$ is shown in \cref{fig:stripa}. We consider the following parameters $\mu_1 = 1, \lambda_1 = 0.01$. The corresponding minimizers $u$ and $v$ are shown in \cref{fig:stripb,fig:stripc}.
When $x=x_1$, $\lambda = \lambda_1$ are fixed, the minimal values $S(x_1, \mu, \lambda_1)$ can be regarded as a function of $\mu$, whose graph is plotted in \cref{fig:striph}. Similarly, the graph of $S(x_1, \mu_1, \lambda)$ is plotted in \cref{fig:stripi}. To illustrate the variation of $S$ with respect to $x$, we choose another image $x_2$ with corresponding suitable parameters $\mu_2$, $\lambda_2$, and plot the function values $f: \alpha \mapsto S(\alpha x_1 + (1-\alpha) x_2, \alpha \mu_1 + (1-\alpha) \mu_2, \alpha \lambda_1 + (1-\alpha) \lambda_2)$ with $\alpha\in [0,1]$. In this example, $x_2$ is chosen to be a rotation of $x_1$, and the parameters remain the same: $\mu_2=\mu_1$, $\lambda_2=\lambda_1$.  The graph of $f$  is plotted in \cref{fig:stripg}. We also show an example of the mixed image $x = \alpha x_1 + (1-\alpha) x_2$ for $\alpha = 0.3$ and the corresponding minimizers $u,v$ in \cref{fig:stripd,fig:stripe,fig:stripf}. In addition, the $A^2BC$ model (with parameters $\mu = 0.06, \lambda = 0.01$) is applied to a noisy image shown in \cref{fig:noisy1}, whose minimizers are shown in \cref{fig:noisy2,fig:noisy3}.

\begin{figure}[tbp!]
    \centering
    \subfloat[]{\label{fig:barbaraj}\includegraphics[width = 0.5\textwidth]{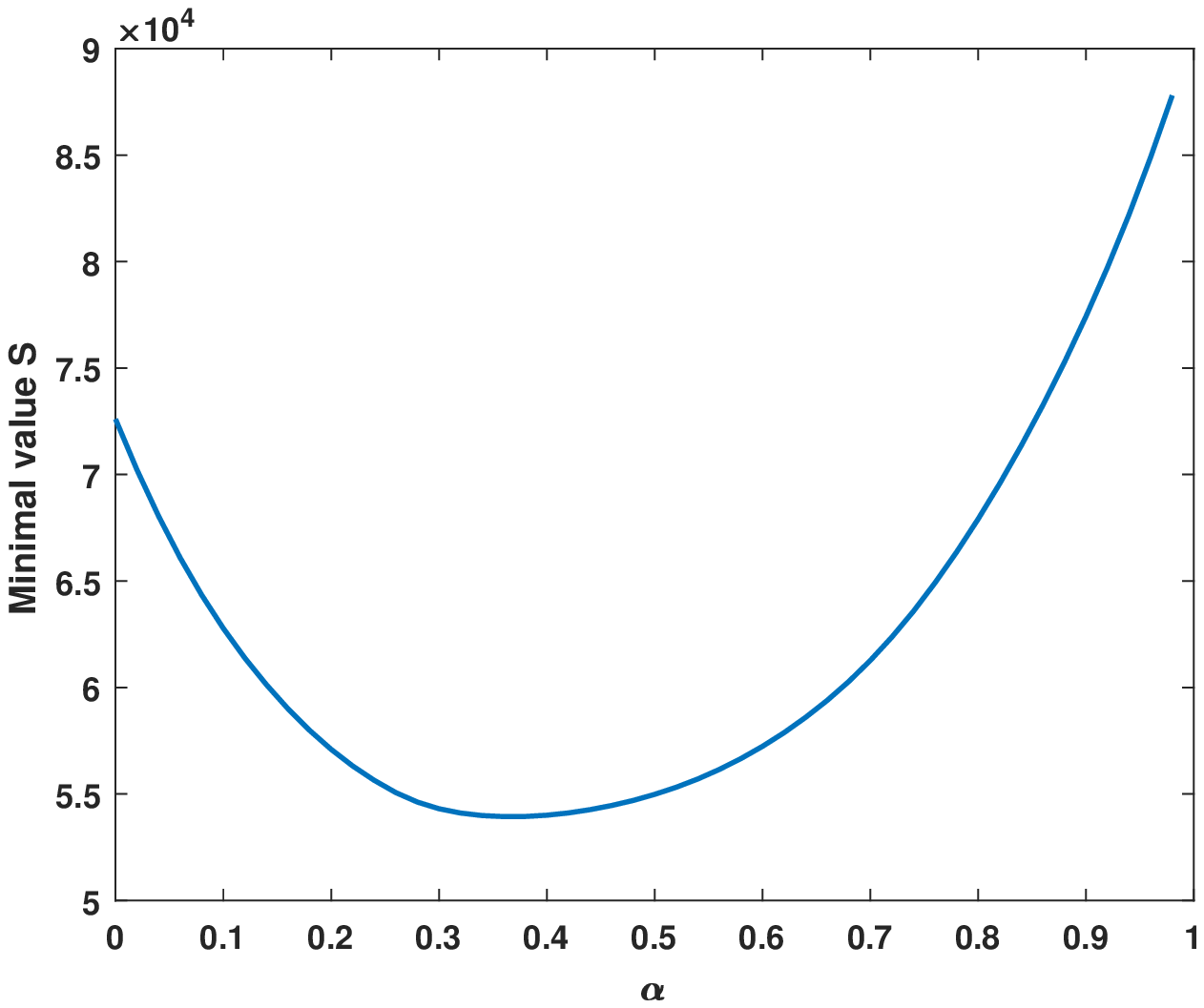}}
\subfloat[]{\label{fig:barbarak}\includegraphics[width = 0.5\textwidth]{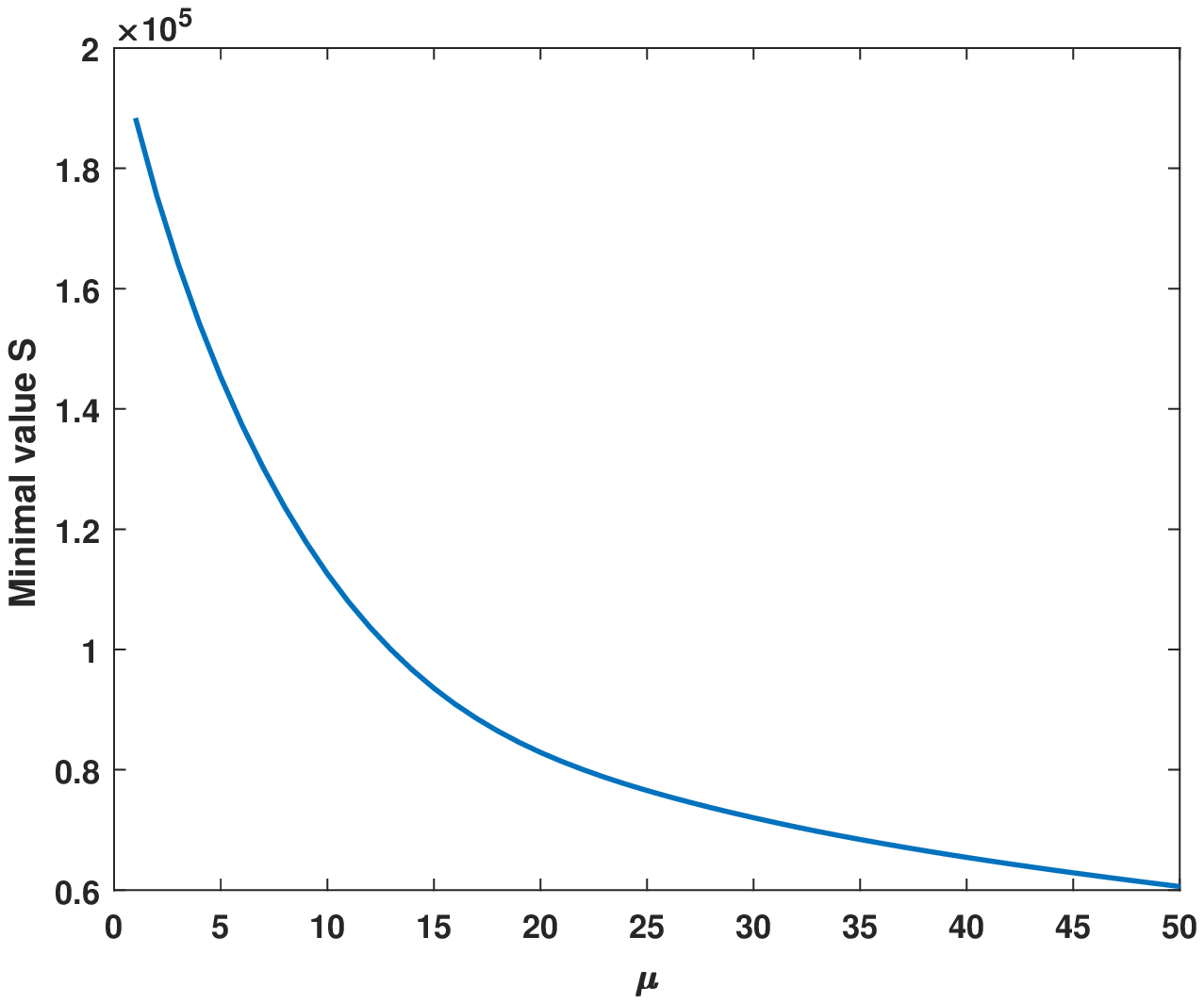}}\\
\subfloat[]{\label{fig:barbaral}\includegraphics[width = 0.5\textwidth]{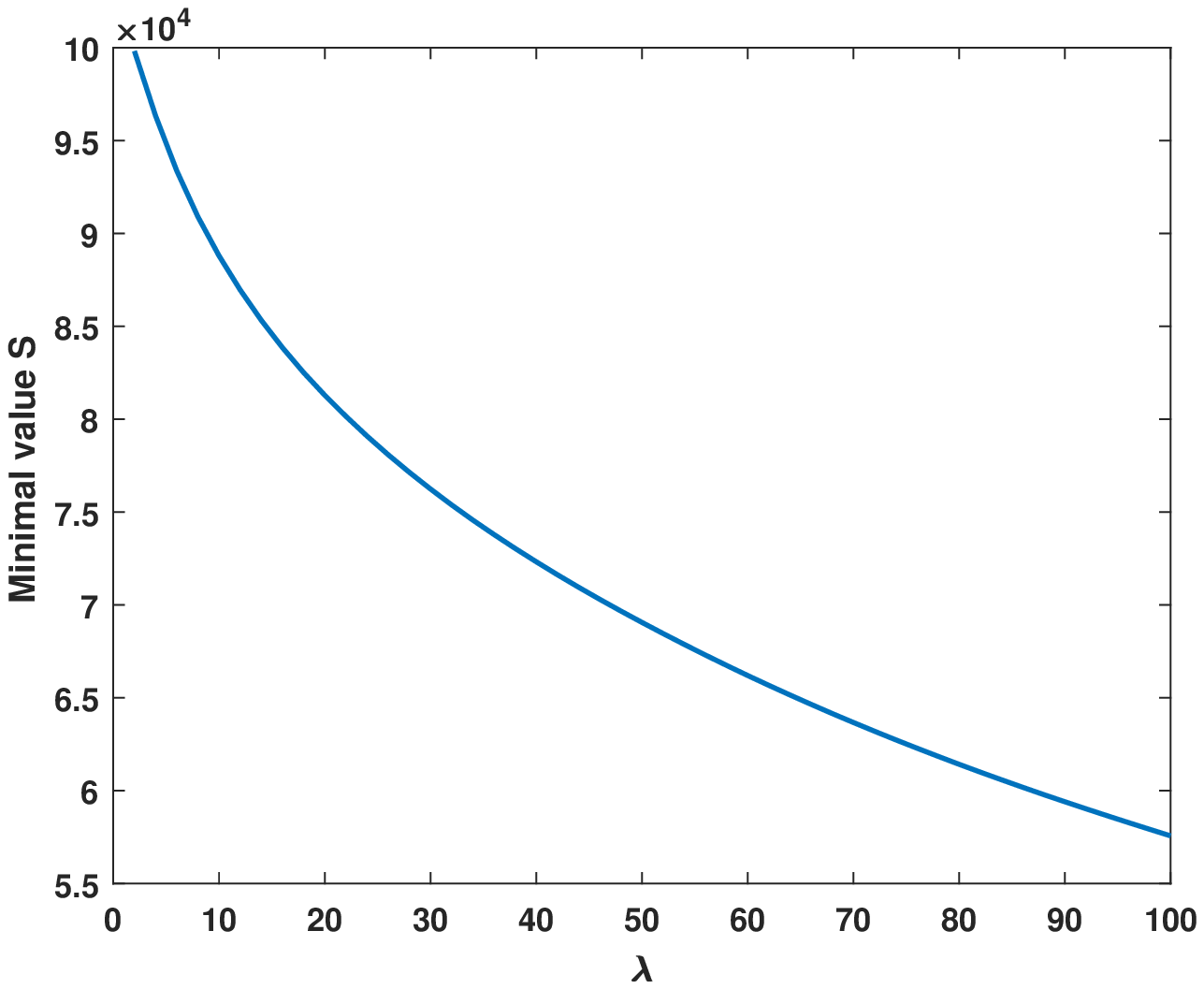}}
    \caption{The graphs of the minimal values $S$ with respect to the variables $\alpha$, $\mu$ and $\lambda$ in the second example are plotted in (a)-(c), respectively.}
    \label{fig:barbara2}
\end{figure}

The test image ``Barbara" is used in the second example. The original image and the corresponding minimizers $u,v$ in the $A^2BC$ model with parameters $\mu = 30, \lambda =8$ are shown in \cref{fig:barbara_full}. To demonstrate the variations of the minimal values, we choose two parts $x_1,x_2$ of the image, shown in \cref{fig:barbaraa,fig:barbarad}, and repeat the experiment in the first example. Setting $\mu_1 = 16$, $\mu_2 = 24$, $\lambda_1 = 8$, and $\lambda_2 = 12$, the corresponding minimizers $u, v$ are shown in \cref{fig:barbarab,fig:barbarac,fig:barbarae,fig:barbaraf}. The mixed image ($\alpha = 0.5$) and minimizers are shown in \cref{fig:barbarag,fig:barbarah,fig:barbarai}, and the dependence of $S$ on $x,\mu, \lambda$ is shown in \cref{fig:barbaraj,fig:barbarak,fig:barbaral}.

It can be seen from \cref{fig:strip2,fig:barbara2} that $S$ is a convex function with respect to the input image $x$ and the parameters. \updatenew{This can be proved with a similar argument as in the proof of \cref{prop:convex}.} In this paper, more properties about $S$ and the minimizers $u, v$ are revealed. 

\bigbreak

\update{\bf{Our contribution.}} The contribution of this paper is the theoretical results connecting the multi-time HJ equation and some optimization models such as decomposition models in imaging sciences. 
\update{There are three parts in this paper. In the first part, we consider the decomposition models and the corresponding dual problems, and investigate the properties of their optimizers and optimal values.}
To be specific, for some optimization problems, the minimal value coincides with the solution $S(x,t_1,\cdots,t_N)$ to a corresponding multi-time HJ equation. \update{This relationship in the case of single-time HJ equations has been studied in \cite{Darbon1}. 
We generalize the representation formula for the minimizer $u_j$ and the variational analysis results of $S$ and $\nabla_x S$ in \cite{Darbon1} to the case of multi-time HJ equations. Moreover, we present a new variational analysis of the scaled minimizer $\frac{u_j}{t_j}$. In the variational analysis, we consider a sequence $\{(x_k, t_{1,k},\cdots, t_{N,k})\}_k$, whose elements are perturbed variables near the point $(x,0,\cdots,0)$ and the perturbation becomes smaller when $k$ is larger.
We show that the limits of the corresponding spatial gradients $\nabla_x S$ and the scaled minimizers $\frac{u_j}{t_j}$ solve two optimization problems which are dual to each other.
In the second part, we prove the uniqueness of the convex solution to the multi-time HJ equation under some specific assumptions. In the field of PDEs, the uniqueness of the viscosity solution has been widely studied, for which we refer the readers to \cite{crandall1992user} and the references listed there. Here, our contribution is to provide a new perspective from convex analysis and use the duality technique to prove the uniqueness of the convex solution.}
At last, we propose a regularization method for the decomposition problems which may have non-unique minimizers or non-differentiable minimal values. 
\update{The regularization method is used to select a unique minimizer $u_{\lambda,\mu}$ and a unique gradient $p_{\lambda,\mu}$ of the minimal function where $\lambda$ and $\mu$ are some positive parameters. In fact, the gradient $p_{\lambda,\mu}$ coincides with the maximizer in the corresponding dual problem. This regularization method can be regarded as a generalization of the Moreau-Yosida approximation, which is introduced, for example, in \cite{Aubin:1984:DIS:577434, brezis1973operateurs}. Instead of only considering the primal problem as in the Moreau-Yosida approximation, our contribution here is to consider both the primal problem and the dual problem at the same time. Then, we apply the variational analysis result in the first part to prove the convergence of $u_{\lambda,\mu}$ and $p_{\lambda,\mu}$. We show that they converge to the $l^2$-projection of zero onto the corresponding sets of the original problems, when the regularization parameters $\lambda$ and $\mu$ approach zero in a comparable rate.}

\update{\bf{Organization of the paper. }}The paper is organized as follows. \Cref{sec:bkgd} gives a brief review of the convex optimization theorems which are used in the later proofs. The main results are stated in \cref{sec:prop,sec:uniq,sec:degenerate}.
In \cref{sec:prop}, the connection between some decomposition models and the multi-time HJ equation is shown.
\Cref{thm:u_formula} provides the representation formula for the minimizers $u_j$ of some decomposition models. Also, we investigate the variational behaviors of the minimal value $S$, the momentum $\nabla_x S$ and the velocities $\frac{u_j}{t_j}$ in \cref{thm:conv_grad}. \Cref{sec:uniq} is devoted to the proof of the uniqueness of the convex solution to the multi-time HJ equation. 
In \cref{sec:degenerate}, we present a regularization method for the degenerate cases which do not satisfy the assumptions in \cref{sec:prop}. The method is demonstrated using a specific example but the analysis can be easily applied to other models.
Finally, some conclusions are drawn in \cref{sec:conclusion}.

\section{Mathematical Background} \label{sec:bkgd}

\newcolumntype{s}{>{\hsize=.6\hsize}X}
\begin{table}[tbp]
\centering
\update{
 \caption{Notations used in this paper. Here, we use $C$ to denote a set, $f$ to denote a function and $x,d$ to denote vectors in $\Rn$.}
 \label{tab:notations}
\begin{tabularx}{\textwidth}{ l|s|X }
\hline
 Notation & Meaning & Definition\\
 \hline
 $\dom f$ & domain of $f$ & $\{x\in \Rn:\ f(x)\in\R\}$
 \\
 $\ri C$ & relative interior of $C$ & the interior of $C$ with respect to the minimal hyperplane containing $C$ in $\Rn$
 \\
 $N_C(x)$ & normal cone of $C$ at $x$ & $\{q\in\Rn:\ \langle q, y-x\rangle \leq 0 \text{ for any } y\in C\}$
 \\
 $C_\infty(x)$ & asymptotic cone of $C$ & $\{d\in \Rn:\ x+td\in C \text{ for all } t>0\}$
 \\
 $\epi f$ & epigraph of $f$ & $\{(x,t)\in\Rn\times\R:\ x\in \dom f,\ t\geq f(x)\}$
 \\
 $\gmRn$ & a useful and standard class of convex functions & the set containing all proper, convex, l.s.c. functions from $\Rn$ to $\R\cup\{+\infty\}$
 \\
 $f'(x,d)$ & directional derivative of $f$ at $x$ along the direction $d$ & $\lim_{h \to 0^+} \frac{1}{h}(f(x+hd) - f(x))$
 \\
 $\partial f(x)$ & subdifferential of $f$ at $x$ & $\{p\in\Rn:\ f(y)\geq f(x) + \langle p, y-x\rangle \ \forall y\in\Rn\}$
 \\
 $I_C$ & the indicator function of $C$ & If $x\in C$, then define $I_C(x):=0$. Otherwise, define $I_C(x):= +\infty$.
 \\
 $f^*$ & Legendre transform of $f$ & $f^*(p) := \sup_{x\in \Rn} \langle p, x\rangle - f(x)$
 \\
 $f\conv g$ & inf-convolution of $f$ and $g$ & $(f\conv g)(x) := \inf_{u\in \Rn} f(u) + g(x-u)$
 \\
\hline
 \end{tabularx}
 }
\end{table}

In this section, several basic definitions and theorems in convex analysis are reviewed. All the results and notations can be found in \cite{convex_analy_book1,convex_analy_book2}. We also refer the readers to \cite{Bauschke:2011:CAM:2028633,Borwein,RockafellarConvexAnalysis}.

First, a set $C$ in $\Rn$ is convex if $\alpha x+ (1-\alpha)y\in C$ whenever $x,y\in C$ and $\alpha\in [0,1]$. The relative interior of $C$, denoted as $\ri C$, is the interior of $C$ with respect to the minimal hyperplane containing $C$ in $\Rn$. For any convex set $C$, the normal cone of $C$ at $x\in C$, denoted by $N_C(x)$, can be characterized by
\begin{equation}\label{eqt:def_normal_cone}
    q\in N_C(x) \text{ if and only if } \langle q, y-x\rangle \leq 0 \text{ for any } y\in C.
\end{equation}
\update{Here, we use the angle bracket $\langle \cdot, \cdot \rangle$ to denote the inner product operator in any Euclidean space $\R^n$.}
For any closed convex set $C$ and any point $x\in C$, one can define the asymptotic cone of $C$, denoted as $C_\infty(x)$, by
\begin{equation}\label{eqt:defasym}
    C_\infty(x) = \{d\in \Rn:\ x+td\in C \text{ for all } t>0\}.
\end{equation}
In fact, the asymptotic cone is independent of $x$, as stated in the following result.
\begin{prop}\cite[Prop.III.2.2.1]{convex_analy_book1}
\label{prop:asym}
Let $C$ be a closed convex set and $x,y\in C$. Then $C_\infty(x) = C_\infty(y)$. In other words, for any $d\in C_\infty(x)$, $y+td\in C$ for any $t>0$.
\end{prop}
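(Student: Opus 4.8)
The plan is to prove the two inclusions $C_\infty(x) \subseteq C_\infty(y)$ and $C_\infty(y) \subseteq C_\infty(x)$; since the roles of $x$ and $y$ are symmetric, it suffices to establish the first one. So I would fix an arbitrary direction $d \in C_\infty(x)$, which by definition means $x + sd \in C$ for every $s > 0$, and fix an arbitrary $t > 0$; the goal is then to show $y + td \in C$, and because $t > 0$ is arbitrary this yields $d \in C_\infty(y)$.

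The key step is to exhibit $y + td$ as a limit of points that are manifestly in $C$, using only convexity and closedness. For each $s \geq t$, I would form the convex combination of the two points $x + sd \in C$ and $y \in C$ with weights $\lambda = t/s \in (0,1]$ and $1 - \lambda$. By convexity of $C$, the point
\begin{equation*}
\frac{t}{s}(x + sd) + \left(1 - \frac{t}{s}\right) y = y + \frac{t}{s}(x - y) + td
\end{equation*}
belongs to $C$. The coefficient $t/s$ in front of $d$ is chosen precisely so that the $d$-component of the combination equals $t$ for every $s$, while the residual perturbation $\frac{t}{s}(x-y)$ shrinks as $s$ grows.

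Letting $s \to +\infty$, these points converge to $y + td$, and since $C$ is closed the limit lies in $C$. This gives $y + td \in C$ for every $t > 0$, hence $d \in C_\infty(y)$ and $C_\infty(x) \subseteq C_\infty(y)$; interchanging $x$ and $y$ gives the reverse inclusion and therefore equality. The two essential ingredients are the convexity of $C$, to form the combination, and its closedness, to pass to the limit. The only point demanding a little care—and the closest thing to an obstacle here—is the choice of the parametrization $\lambda = t/s$, which must simultaneously pin the $d$-coefficient at $t$ and drive the residual term $\frac{t}{s}(x-y)$ to zero as $s \to +\infty$; once this choice is made, the argument follows immediately.
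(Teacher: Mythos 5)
Your proof is correct, and it is essentially the classical argument behind the cited result \cite[Prop.III.2.2.1]{convex_analy_book1}: the paper itself gives no proof (it defers to the reference), and the reference's proof is precisely your construction of $y+td$ as the limit, as $s\to+\infty$, of the convex combinations $\tfrac{t}{s}(x+sd)+\bigl(1-\tfrac{t}{s}\bigr)y\in C$, followed by closedness of $C$. Nothing is missing; the choice $\lambda=t/s$ is exactly the standard device.
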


A function $f: \Rn \to \R\cup\{+\infty\}$ is said to be convex if for any $\alpha \in (0,1)$ and any $x,y\in \Rn$,
\begin{equation*}
f(\alpha x + (1-\alpha)y)\leq \alpha f(x) + (1-\alpha) f(y).
\end{equation*}
The function $f$ is called proper if it is not identically equal to $+\infty$.
The domain of $f$, denoted by $\dom f$, is defined to be the set where $f$ does not take the value $+\infty$. The epigraph of $f$, denoted as $\epi f$, is defined by:
\begin{equation*}
\epi f := \{(x,t):\ x\in \dom f,\ t\geq f(x)\}.
\end{equation*}

Then, $f$ is convex (proper, or lower semi-continuous, respectively) if and only if $\epi f$ is convex (non-empty, or closed, respectively). 
We denote $\gmRn$ to be the set of proper, convex and lower semi-continuous (l.s.c) functions from $\Rn$ to $\R\cup\{+\infty\}$. In this section, we only consider the functions in $\gmRn$. These functions have good continuity properties, which are stated below.
\begin{prop} \cite[Lem.IV.3.1.1 and Chap.I.3.1 - 3.2]{convex_analy_book1}
\label{prop1}
Let $f\in \gmRn$. If $x\in \ri \dom f$, then $f$ is continuous at $x$ in $\dom f$. If $x\in \dom f \setminus \ri \dom f$, then for any $y\in \ri \dom f$,
\begin{equation*}
f(x) = \lim_{t\to 0^+} f(x + t(y-x)).
\end{equation*}
\end{prop}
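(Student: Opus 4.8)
The plan is to establish the two assertions separately, treating continuity on the relative interior first and then the behaviour along segments reaching the relative boundary.

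For the first assertion I would reduce to the full-dimensional case. Let $A$ denote the affine hull of $\dom f$; restricting attention to $A$, the relative interior $\ri\dom f$ becomes the ordinary interior of $\dom f$ inside $A$, so it suffices to prove that a finite convex function on an open convex subset of a Euclidean space is continuous there, and I may assume $x\in\mathrm{int}\,\dom f$. The first step is \emph{local boundedness from above}: choosing an affine basis $v_0,\dots,v_d\in\dom f$ whose simplex is a neighbourhood of $x$ inside $A$, every point of that simplex is a convex combination of the $v_i$, so convexity (Jensen's inequality) gives $f\le \max_i f(v_i)=:M$ there. The second step promotes this to \emph{local Lipschitz continuity}. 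Boundedness below on a slightly smaller ball $B(x,r)$ comes from the reflection identity $x=\tfrac12\big(z+(2x-z)\big)$ together with convexity, which yields $f(z)\ge 2f(x)-M$. For the Lipschitz estimate I would take two points $y_1,y_2\in B(x,r)$, extend the segment past $y_2$ to a point $y_3$ at controlled distance, write $y_2$ as an explicit convex combination of $y_1$ and $y_3$, and apply convexity to bound $f(y_2)-f(y_1)$ by a constant multiple of $\|y_2-y_1\|$; swapping the roles of $y_1$ and $y_2$ gives the two-sided bound. Lipschitz continuity on a neighbourhood of $x$ relative to $\dom f$ implies the claimed continuity.

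For the second assertion, fix $y\in\ri\dom f$ and $x\in\dom f\setminus\ri\dom f$, and set $\phi(t):=f(x+t(y-x))$ for $t\in[0,1]$. Since $\dom f$ is convex and both endpoints lie in it, $\phi$ is a finite convex function of one real variable. The argument is a two-sided squeeze at $t=0^+$. Convexity applied to the decomposition $t=\tfrac{s-t}{s}\cdot 0+\tfrac{t}{s}\cdot s$ for fixed $0<s\le 1$ gives $\phi(t)\le \tfrac{s-t}{s}\phi(0)+\tfrac{t}{s}\phi(s)$, and letting $t\to 0^+$ yields $\limsup_{t\to0^+}\phi(t)\le\phi(0)=f(x)$. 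In the other direction, lower semicontinuity of $f$ (recall $f\in\gmRn$) gives $\liminf_{t\to0^+}\phi(t)\ge f(x)$. Combining the two inequalities forces $\lim_{t\to0^+}\phi(t)=f(x)$, which is exactly the stated limit.

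I expect the main obstacle to be the first assertion, specifically the passage from mere upper boundedness to the Lipschitz estimate and the clean reduction to the affine hull; the geometric constructions (the simplex giving the upper bound, the reflection giving the lower bound, and the collinear third point giving the modulus of continuity) are elementary but must be set up carefully so that all auxiliary points remain inside the relevant neighbourhood where the bounds hold. By contrast, the second assertion is short once lower semicontinuity is invoked, since convexity alone only controls the $\limsup$, and it is precisely the membership $f\in\gmRn$ that supplies the matching lower bound.
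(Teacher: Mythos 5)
Your proposal is correct. The paper offers no proof of \cref{prop1} at all---the statement is quoted directly from \cite{convex_analy_book1}---and your argument (simplex upper bound, reflection lower bound, and collinear-point Lipschitz estimate for continuity on $\ri \dom f$; the convexity/lower-semicontinuity squeeze along the segment for the limit at relative-boundary points, where you rightly note that convexity alone controls only the $\limsup$ and membership in $\gmRn$ supplies the matching $\liminf$) is essentially the standard textbook proof found in that reference.
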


For any $f\in \gmRn$ and $x\in \dom f$, the directional derivative at $x$ along any direction $d$, denoted as $f'(x,d)$, is well-defined in $\R\cup \{\pm\infty\}$. When $f$ is differentiable at $x$, $f'(x,\cdot) = \langle \nabla f(x), \cdot\rangle$ is a linear function. In general, when $f$ is not differentiable, $f'(x,\cdot)$ is only sublinear, in which case we can consider the linear functions dominated by it. Each normal vector of such linear functions gives a subgradient of $f$ at $x$, whose formal definition is given below. Also, the rigorous statement about the relation we described above between the directional derivatives and subgradients is given in \cref{prop5}. 

A vector $p$ is called a subgradient of $f$ at $x$ if it satisfies
\begin{equation*}
f(y)\geq f(x) + \langle p, y-x\rangle, \text{ for any } y\in\Rn.
\end{equation*}
The collection of all such subgradients is called the subdifferential of $f$ at $x$, denoted as $\partial f(x)$. It is easy to check that $0\in \partial f(x)$ if and only if $x$ is a minimizer of $f$. As a result, one can check whether $x$ is a minimizer by computing the subdifferential.

As is well known, the subdifferential operator is a (maximal) monotone operator. To be specific, 
\begin{equation}\label{eqt:subdiff_monotone}
\langle p-q, x-y\rangle \geq 0 \text{ for any }p\in \partial f(x)\text{ and }q\in \partial f(y).
\end{equation}
Moreover, in most cases, the subdifferential operator commutes with summation.
\begin{prop} \cite[Cor.XI.3.1.2]{convex_analy_book2}
\label{prop7}
Let $f,g\in \gmRn$. Assume $\ri \dom f \cap \ri \dom g \neq \emptyset$. Then $\partial (f+g)(x) = \partial f(x) + \partial g(x)$ for any $x\in \dom f \cap \dom g$.
\end{prop}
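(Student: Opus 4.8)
The plan is to prove the two inclusions separately, the easy one being $\partial f(x) + \partial g(x) \subseteq \partial(f+g)(x)$, which needs no qualification condition. If $p\in\partial f(x)$ and $q\in\partial g(x)$, I would simply add the two defining subgradient inequalities $f(y)\geq f(x)+\langle p,y-x\rangle$ and $g(y)\geq g(x)+\langle q,y-x\rangle$ to obtain $(f+g)(y)\geq (f+g)(x)+\langle p+q,y-x\rangle$ for every $y\in\Rn$, so that $p+q\in\partial(f+g)(x)$. This direction is purely formal.

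The substance is the reverse inclusion $\partial(f+g)(x)\subseteq \partial f(x)+\partial g(x)$, where the hypothesis $\ri\dom f\cap\ri\dom g\neq\emptyset$ is essential. First I would reduce to the case where $x$ is a minimizer. Given $s\in\partial(f+g)(x)$, I replace $f$ by $\tilde f := f-\langle s,\cdot\rangle$, so that $\partial\tilde f(x)=\partial f(x)-s$ and $0\in\partial(\tilde f+g)(x)$, i.e. $x$ minimizes $\tilde f+g$. It then suffices to produce a vector $p$ with $p\in\partial\tilde f(x)$ and $-p\in\partial g(x)$: writing $p=p'-s$ with $p'\in\partial f(x)$ gives $-p=s-p'\in\partial g(x)$, whence $s=p'+(s-p')\in\partial f(x)+\partial g(x)$.

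To construct such a $p$ in the minimizer case, the plan is a separation argument in $\Rn\times\R$. I would consider the shifted epigraph $A:=\{(d,\mu):\mu\geq \tilde f(x+d)-\tilde f(x)\}$ together with the set $B:=\{(d,\mu):\mu\leq g(x)-g(x+d)\}$; both are convex, $A$ being an epigraph and $B$ the hypograph of a concave function. Because $x$ minimizes $\tilde f+g$, one has $\tilde f(x+d)-\tilde f(x)\geq g(x)-g(x+d)$ for all $d$, which forces $\ri A\cap\ri B=\emptyset$ (any common point of the relative interiors would satisfy $\mu>\tilde f(x+d)-\tilde f(x)\geq g(x)-g(x+d)>\mu$). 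The standard proper separation theorem for convex sets with disjoint relative interiors then provides a nonzero $(p,\sigma)\in\Rn\times\R$ separating $A$ and $B$.

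The hard part will be showing that the separating functional is non-vertical, i.e. $\sigma\neq 0$, since only then can one normalize $\sigma$ and read off the two subgradient inequalities yielding $p\in\partial\tilde f(x)$ and $-p\in\partial g(x)$. This is exactly where the constraint qualification enters: fixing a point $z\in\ri\dom f\cap\ri\dom g$, a vertical hyperplane ($\sigma=0$) would have to separate the overlapping relative interiors of $\dom\tilde f$ and $\dom g$ near $z$, which is impossible, so I would use $z$ to rule out $\sigma=0$. An alternative route worth noting is the conjugate formulation: under the same qualification one has $(f+g)^*=f^*\conv g^*$ with the inf-convolution attained, and combining this with the subdifferential inversion rule gives the same decomposition; but since this ultimately rests on the same separation fact, I would favor the direct geometric argument above.
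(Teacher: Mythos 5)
The paper offers no proof of this proposition to compare against: it is quoted as a background fact, with the proof deferred to the cited reference \cite[Cor.XI.3.1.2]{convex_analy_book2}. So the only question is whether your argument stands on its own, and it does --- what you have sketched is the classical Moreau--Rockafellar proof, and each step is sound: the elementary inclusion by adding subgradient inequalities; the tilt reduction $\tilde f = f - \langle s,\cdot\rangle$ (which leaves $\dom f$, and hence the qualification, unchanged, and only uses the trivial rule for adding a linear function, so it is not circular); the disjointness of the relative interiors of the shifted epigraph $A$ and hypograph $B$; proper separation; and the use of the qualification to force a non-vertical hyperplane. Three details should be made explicit in a full write-up. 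First, your parenthetical strict inequalities rest on the identity $\ri \epi h = \{(d,\mu):\ d\in \ri\dom h,\ \mu > h(d)\}$ for proper convex $h$; this is standard but needs to be invoked. Second, before normalizing $\sigma$ you must pin down its sign: since $A$ is unbounded upward and $B$ unbounded downward in the $\mu$-direction, the separating functional can only be bounded appropriately if $\sigma \geq 0$, and only then does the qualification upgrade this to $\sigma > 0$. Third, ruling out $\sigma = 0$ genuinely requires \emph{proper} separation, not mere separation: if $\dom f$ and $\dom g$ both lay in a common affine hyperplane, a vertical hyperplane would still separate $A$ and $B$ improperly; what is impossible when $\ri\dom f \cap \ri\dom g \neq \emptyset$ is proper separation of the domains, by the criterion that two convex sets are properly separable if and only if their relative interiors are disjoint. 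Finally, your closing remark about the conjugate route can be made concrete inside this paper's own toolkit: applying \cref{prop4} to $f^*$ and $g^*$ gives $(f+g)^* = f^*\conv g^*$ with attained infimum and $\partial(f^*\conv g^*)(s) = \partial f^*(q)\cap \partial g^*(s-q)$ for any minimizer $q$, and then \cref{prop3} converts $s\in\partial(f+g)(x)$ into $q\in\partial f(x)$ and $s-q\in\partial g(x)$; as you note, this is not more elementary, since \cref{prop4} is itself proved by the same separation technique.
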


Here, we give one simple example. For any convex set $C$, the indicator function $I_C$ is defined by
\begin{equation*}
    I_C(x) := \begin{cases}
    0, &x\in C;\\
    +\infty, &x\not\in C.
    \end{cases}
\end{equation*}
In this paper, we also use the notation $I\{\cdot\}$ to denote the indicator function if the set $C$ is given in the form of some constraints. \update{By definition, the indicator function $I_C$ remains the same after multiplying by a positive constant, i.e. we have $\alpha I_C = I_C$ for any $\alpha >0$.} One can compute the subdifferential of the indicator function and obtain
\begin{equation} \label{eqt:subgrad_indicator}
    \partial I_C(x) = N_C(x).
\end{equation}

\bigbreak
Next, we introduce one important transform in convex analysis called Legendre transform. For any function $f\in \gmRn$, the Legendre transform of $f$, denoted as $f^*$, is defined by
\begin{equation}\label{eqt:Legendre}
f^*(p) := \sup_{x\in \Rn} \langle p, x\rangle - f(x).
\end{equation}
Legendre transform gives a duality relationship between $f$ and $f^*$. In other words, if $f\in \gmRn$, then $f^*\in \gmRn$ and $f^{**} = f$.
Similarly, along with this duality relationship, some properties are dual to others, as stated in the following proposition. (Here and after, a function $g$ is called 1-coercive if $\lim_{\|x\|\to +\infty} g(x)/\|x\| = +\infty$.)
\begin{prop} \cite[Chap.X.4.1]{convex_analy_book2}
\label{prop6}
Let $f\in \gmRn$. Then $f$ is finite-valued if and only if $f^*$ is 1-coercive. Also, $f$ is differentiable if and only if $f^*$ is strictly convex.
\end{prop}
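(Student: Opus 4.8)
The plan is to prove the two stated equivalences separately, in each case leaning on the biconjugation identity $f^{**}=f$ (valid for $f\in\gmRn$) together with the characterization of equality in the Fenchel--Young inequality. For the first equivalence I would isolate the auxiliary claim that a function $g\in\gmRn$ is $1$-coercive if and only if its conjugate $g^*$ is finite-valued on all of $\Rn$; applying this claim to $g=f^*$ and using $f^{**}=f$ then immediately gives ``$f^*$ is $1$-coercive iff $f$ is finite-valued.'' To prove the claim from the definition \eqref{eqt:Legendre}, suppose first that $g$ is $1$-coercive and fix $p$. Since $g(x)/\|x\|\to+\infty$, for $\|x\|$ large we have $g(x)\geq (\|p\|+1)\|x\|$, so $\langle p,x\rangle - g(x)\leq \|p\|\|x\| - (\|p\|+1)\|x\| = -\|x\|\to-\infty$; hence the supremum defining $g^*(p)$ is attained over a bounded set, on which the proper lower semicontinuous function $g$ is bounded below, so $g^*(p)<+\infty$. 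Conversely, if $g^*$ is finite everywhere then $\dom g^*=\Rn$, so by \cref{prop1} it is continuous and therefore bounded by some constant $C_R$ on each sphere $\{\|p\|=R\}$; the Fenchel--Young inequality $g(x)\geq\langle p,x\rangle - g^*(p)$ evaluated at $p=R\,x/\|x\|$ yields $g(x)\geq R\|x\|-C_R$, whence $\liminf_{\|x\|\to\infty} g(x)/\|x\|\geq R$ for every $R$, i.e. $g$ is $1$-coercive.

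For the second equivalence, the engine is the subdifferential inversion that follows from \eqref{eqt:Legendre}: the inequality $f^*(p)\geq\langle p,x\rangle - f(x)$ holds always, with equality exactly when $p\in\partial f(x)$, and by the symmetry $f^{**}=f$ this equality is also equivalent to $x\in\partial f^*(p)$. Thus $x\in\partial f^*(p)\iff p\in\partial f(x)$. Combining this inversion with the standard fact that a function in $\gmRn$ is differentiable at an interior point of its domain precisely when its subdifferential there is a singleton, the whole statement reduces to the purely convex-analytic lemma: a function $g\in\gmRn$ is strictly convex if and only if its subdifferential is injective, in the sense that $\partial g(p)\cap\partial g(q)=\emptyset$ whenever $p\neq q$.

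I would establish this lemma by contraposition on each side. If some $x\in\partial g(p)\cap\partial g(q)$ with $p\neq q$, then the affine function $\ell(r)=g(p)+\langle x,r-p\rangle$ is a global minorant of $g$ touching it at both $p$ and $q$; adding the supporting inequalities at $p$ and $q$ forces $g(q)=g(p)+\langle x,q-p\rangle$, and then evaluating the support inequality and convexity at the midpoint $m=\tfrac12(p+q)$ pins $g(m)=\tfrac12 g(p)+\tfrac12 g(q)$, contradicting strict convexity. Conversely, if $g$ is not strictly convex it is affine on some nondegenerate segment $[p,q]$; picking $x\in\partial g(m)$ at a relatively interior point $m$ gives a global affine minorant $\ell$ touching at $m$, and on the segment both $g$ and $\ell$ are affine with $g\geq\ell$ and equality at the interior point $m$, so they coincide on all of $[p,q]$ (a nonnegative affine function vanishing at an interior point vanishes identically). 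Hence $x\in\partial g(p)\cap\partial g(q)$ and $\partial g$ is not injective. Chaining the inversion rule with this lemma applied to $g=f^*$ gives the equivalences: $f^*$ strictly convex $\iff$ $\partial f^*$ injective $\iff$ no point $x$ carries two distinct subgradients of $f$ $\iff$ $\partial f(x)$ is a singleton at every relevant $x$ $\iff$ $f$ is differentiable.

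The main obstacle is not any single estimate but the careful handling of domains and boundary points. In the first part one must be sure the supremum in $g^*$ is genuinely finite rather than merely bounded above, which is exactly where properness and lower semicontinuity enter. In the second part the delicate issues are that the equivalence ``singleton subdifferential $\iff$ differentiable'' is clean only at points of $\int\dom f$, and that $\partial g(m)$ is guaranteed nonempty only for $m\in\ri\dom g$; I would therefore run the affine-segment argument on a relatively interior subsegment and use \cref{prop1} to transfer continuity and the conclusion to the endpoints. Making these relative-interior reductions precise, and justifying the inversion $\partial f^*=(\partial f)^{-1}$ cleanly at boundary points of $\dom f^*$, are the steps that require the most attention.
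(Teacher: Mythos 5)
The paper does not actually prove this proposition; it is quoted verbatim from \cite[Chap.X.4.1]{convex\_analy\_book2}, so your attempt can only be judged on its own correctness. Your first equivalence is correct and complete: the auxiliary claim ``$g$ is $1$-coercive iff $g^*$ is finite-valued,'' proved via the two estimates you give, combined with biconjugation $f^{**}=f$, is exactly the standard argument, and your handling of the finiteness of the supremum (properness for $>-\infty$, lower semicontinuity on bounded sets for $<+\infty$) is sound.

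The second equivalence, however, has a genuine gap, and it sits precisely at the point you flagged as ``requiring the most attention.'' Your lemma ``$g\in\gmRn$ is strictly convex iff $\partial g$ is injective'' is false in the direction you need for ``$f$ differentiable $\Rightarrow f^*$ strictly convex.'' Take $g(x,y)=-\sqrt{y}+\epsilon yx^2+I_{[0,1]^2}(x,y)$ with $0<\epsilon<1/8$. Then $g\in\Gamma_0(\R^2)$: its Hessian is positive definite on the open square, so the only nondegenerate segments on which $g$ is affine lie in the bottom edge $\{y=0\}$, where $g\equiv 0$; hence $g$ is \emph{not} strictly convex. But the $-\sqrt{y}$ term forces $\partial g(x_0,0)=\emptyset$ for every $x_0$, so no two distinct points of $\dom\partial g$ share a subgradient: $\partial g$ \emph{is} injective. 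Consequently $f:=g^*$ is finite-valued (since $\dom g$ is compact and $g\geq -1$) with $\partial f(w)$ nonempty and, by injectivity of $\partial g$ and the inversion rule, at most a singleton for every $w$; thus $f$ is differentiable on all of $\R^2$, yet $f^*=g$ is not strictly convex. So the failure is not merely in your lemma: the implication ``$f$ differentiable $\Rightarrow f^*$ strictly convex'' is itself false for general $f\in\gmRn$ (the clean general duality is between \emph{essential} smoothness and \emph{essential} strict convexity, i.e.\ strict convexity on convex subsets of $\dom \partial f^*$). Your proposed repair --- running the affine-segment argument on a relatively interior subsegment and transferring to the endpoints via \cref{prop1} --- cannot close this: in the example the affine segment lies entirely in the relative boundary of $\dom g$, while $g$ is strictly convex on $\ri \dom g$, so there is nothing to transfer. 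The argument does become airtight in the regime where the paper invokes \cref{prop6}: assumption (H1) makes the relevant function finite-valued and $1$-coercive, so by your own first part both $f$ and $f^*$ are finite on all of $\Rn$; then every subdifferential is nonempty, no affine segment can hide on a boundary, and your lemma together with the inversion $p\in\partial f(x)\iff x\in\partial f^*(p)$ yields both directions. You should therefore either add finite-valuedness of $f$ and $f^*$ as a standing hypothesis for the second equivalence, or restate it with the essential-strict-convexity qualifier.
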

In particular, the subgradients can be characterized by the maximizers in \cref{eqt:Legendre}.
\begin{prop} \cite[Cor.X.1.4.4]{convex_analy_book2}
\label{prop3}
Let $f\in \gmRn$ and $p,x \in \Rn$. Then $p\in \partial f(x)$ if and only if $x\in \partial f^*(p)$, if and only if $f(x) + f^*(p) = \langle p, x\rangle$.
\end{prop}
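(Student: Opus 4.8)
The plan is to establish that the three conditions are equivalent by pivoting everything on the middle condition, the Fenchel--Young equality $f(x) + f^*(p) = \langle p, x\rangle$. The natural starting point is the Fenchel--Young inequality, which is immediate from the definition of the Legendre transform in \cref{eqt:Legendre}: since $f^*(p) = \sup_{y\in\Rn} \langle p, y\rangle - f(y)$, taking $y = x$ gives $f^*(p) \geq \langle p, x\rangle - f(x)$, that is $f(x) + f^*(p) \geq \langle p, x\rangle$ for every pair $p, x$. Consequently the equality condition is equivalent to the reverse inequality $f(x) + f^*(p) \leq \langle p, x\rangle$, and it is this reverse inequality that I would match up with the subgradient relation.

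First I would show that $p \in \partial f(x)$ holds if and only if $f(x) + f^*(p) = \langle p, x\rangle$. For the forward direction, the subgradient inequality $f(y) \geq f(x) + \langle p, y-x\rangle$ rearranges to $\langle p, x\rangle - f(x) \geq \langle p, y\rangle - f(y)$ for all $y$; taking the supremum over $y$ yields $\langle p, x\rangle - f(x) \geq f^*(p)$, which combined with Fenchel--Young forces equality. Conversely, if the equality holds, then $\langle p, x\rangle - f(x) = f^*(p) \geq \langle p, y\rangle - f(y)$ for every $y$ by the definition of $f^*$, and rearranging recovers exactly the defining inequality of $p \in \partial f(x)$.

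The remaining equivalence, $x \in \partial f^*(p)$ if and only if $f(x) + f^*(p) = \langle p, x\rangle$, I would obtain for free by symmetry. Since $f \in \gmRn$ implies $f^* \in \gmRn$, the equivalence just proved applies verbatim with $f^*$ in place of $f$, giving that $x \in \partial f^*(p)$ if and only if $f^*(p) + f^{**}(x) = \langle p, x\rangle$. Invoking the biduality $f^{**} = f$, already recorded in the excerpt as part of the duality relationship for the Legendre transform, this last equality is identical to $f(x) + f^*(p) = \langle p, x\rangle$, which closes the cycle of equivalences.

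There is no serious obstacle here; the whole argument is elementary bookkeeping around the definition of $f^*$ and the Fenchel--Young inequality, and the only structural point worth emphasizing is the use of $f^{**} = f$ to avoid repeating the argument in the dual variable. The single step requiring a moment of care is recognizing that the statement is genuinely symmetric in the pairs $(f, x)$ and $(f^*, p)$, so that proving one equivalence between a subgradient relation and the Fenchel--Young equality and then dualizing suffices to deliver the full proposition.
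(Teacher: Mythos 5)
Your proof is correct. The paper does not prove this proposition at all --- it is quoted verbatim from the cited convex analysis reference (Cor.X.1.4.4 of Hiriart-Urruty and Lemar\'echal) --- and your argument, namely the Fenchel--Young inequality, the equivalence of the subgradient inequality with the equality case, and then dualization via $f^{**}=f$ (the one step that genuinely uses $f\in\gmRn$), is exactly the standard proof found in such references, so there is nothing further to compare.
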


The concepts we introduced above, including directional derivatives, subgradients and Legendre transform, can be linked all together by the following proposition. 
\begin{prop} \cite[Example X.2.4.3]{convex_analy_book2}
\label{prop5}
Let $f\in \gmRn$ and $x\in \dom f$ such that $\partial f(x)$ is nonempty, then $(f'(x,\cdot))^* = I_{\partial f(x)}$. Moreover, if $x\in \ri \dom f$, then $f'(x,\cdot)\in \gmRn$, hence $f'(x,\cdot) = I_{\partial f(x)}^*$.
\end{prop}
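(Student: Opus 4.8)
The plan is to prove both assertions through a single characterization of the subdifferential in terms of the directional derivative. First I would establish the key fact that $p\in\partial f(x)$ if and only if $\langle p,d\rangle\leq f'(x,d)$ for every $d\in\Rn$. The forward direction follows by inserting $y=x+hd$ into the subgradient inequality and dividing by $h>0$; since convexity makes the difference quotient $h\mapsto \frac{1}{h}(f(x+hd)-f(x))$ nondecreasing, its limit as $h\to 0^+$ is its infimum, so $\langle p,d\rangle\leq f'(x,d)$. The converse uses the same monotonicity in reverse: $\langle p,d\rangle\leq f'(x,d)\leq \frac{1}{h}(f(x+hd)-f(x))$ for every $h>0$, and taking $h=1$ with $d=y-x$ recovers the subgradient inequality. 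Along the way I would record two structural facts: because $\partial f(x)\neq\emptyset$, every $p\in\partial f(x)$ gives $f'(x,d)\geq\langle p,d\rangle>-\infty$, so $f'(x,\cdot)$ never takes the value $-\infty$; and $f'(x,\cdot)$ is positively homogeneous, i.e. $f'(x,td)=tf'(x,d)$ for $t>0$, by the substitution $s=ht$ in the defining limit.

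Next I would compute $(f'(x,\cdot))^*$ directly. Since $f'(x,0)=0$, the quantity $\langle p,d\rangle-f'(x,d)$ equals $0$ at $d=0$, so the supremum defining $(f'(x,\cdot))^*(p)$ is always at least $0$. If $p\in\partial f(x)$, the characterization gives $\langle p,d\rangle-f'(x,d)\leq 0$ for all $d$, whence the supremum equals $0$. If $p\notin\partial f(x)$, there is a direction $d_0$ with $\langle p,d_0\rangle-f'(x,d_0)>0$ (and $f'(x,d_0)$ is then finite), and positive homogeneity makes $\langle p,td_0\rangle-f'(x,td_0)=t(\langle p,d_0\rangle-f'(x,d_0))\to+\infty$ as $t\to+\infty$, so the supremum is $+\infty$. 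Thus $(f'(x,\cdot))^*$ equals $0$ on $\partial f(x)$ and $+\infty$ elsewhere, i.e. $(f'(x,\cdot))^*=I_{\partial f(x)}$, which is the first assertion.

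For the second assertion I would show $f'(x,\cdot)\in\gmRn$ when $x\in\ri\dom f$ and then invoke biconjugation. Convexity is immediate from positive homogeneity together with subadditivity (itself a consequence of convexity of $f$), and properness holds because $f'(x,0)=0$ is finite while the value $-\infty$ is excluded by the first paragraph. The substantive point is lower semicontinuity. When $x\in\ri\dom f$ the subdifferential is nonempty and, letting $V$ denote the linear subspace parallel to $\mathrm{aff}(\dom f)$, I would argue that $\dom(f'(x,\cdot))=V$: for $d\in V$ both $x\pm\epsilon d$ lie in $\dom f$ for small $\epsilon>0$, giving a finite upper bound on $f'(x,d)$, whereas for $d\notin V$ the ray $x+hd$ leaves the affine hull of $\dom f$, forcing $f'(x,d)=+\infty$. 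Because $f'(x,\cdot)$ is a finite convex function on the subspace $V$ it is continuous there, and since $V$ is closed with $f'(x,\cdot)\equiv+\infty$ on the open complement of $V$, lower semicontinuity holds at every point. With $f'(x,\cdot)\in\gmRn$ established, the duality relation $f^{**}=f$ for functions in $\gmRn$ combined with the first assertion yields $f'(x,\cdot)=(f'(x,\cdot))^{**}=(I_{\partial f(x)})^*=I_{\partial f(x)}^*$.

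I expect the lower-semicontinuity step to be the main obstacle, since it is the only place where the hypothesis $x\in\ri\dom f$ is genuinely used: the relative-interior assumption is precisely what pins the effective domain of $f'(x,\cdot)$ down to the closed subspace $V$ and rules out the boundary directions along which a merely convex directional-derivative function can fail to be closed.
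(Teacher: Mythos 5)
The paper itself gives no proof of this proposition; it is quoted as background directly from the cited reference [Example X.2.4.3 of Hiriart-Urruty--Lemar\'echal], so there is no internal argument to compare against. Your proof is correct and is essentially the classical one: the characterization $p\in\partial f(x)$ if and only if $\langle p,d\rangle\leq f'(x,d)$ for all $d$ (via monotonicity of the difference quotient), positive homogeneity to force the conjugate $(f'(x,\cdot))^*$ to take only the values $0$ on $\partial f(x)$ and $+\infty$ off it, and then biconjugation once $f'(x,\cdot)\in\gmRn$ is established. Your handling of the only delicate step, lower semicontinuity, is also sound: the hypothesis $x\in\ri\dom f$ pins $\dom f'(x,\cdot)$ down to the closed subspace $V$ parallel to the affine hull of $\dom f$, the restriction to $V$ is a finite convex function and hence continuous, and splitting any convergent sequence of directions into those inside and outside $V$ gives lower semicontinuity everywhere. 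The auxiliary facts you invoke without proof (subadditivity of the directional derivative, continuity of finite convex functions in finite dimensions, nonemptiness of the subdifferential on the relative interior) are standard, and the last one is in any case part of the standing hypothesis, so there is no gap.
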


Except from Legendre transform, there is another operator to construct convex functions called inf-convolution. Given two functions $f,g\in \gmRn$, assume there exists an affine function $l$ such that $f(x)\geq l(x)$ and $g(x)\geq l(x)$ for any $x\in\Rn$. Then, the inf-convolution between $f$ and $g$, denoted as $f\conv g$, is a convex function taking values in $\R\cup \{+\infty\}$. The definition of the inf-convolution $f\conv g$ is given by
\begin{equation} \label{eqt:infconv}
(f\conv g)(x) := \inf_{u\in \Rn} f(u) + g(x-u).
\end{equation}
In the following proposition, the relation between Legendre transform and inf-convolution is stated. Actually, the Hopf formula and Lax formula introduced in the next section are formulated using Legendre transform and inf-convolution operator, respectively.
As a result, these two operators play a significant role in our analysis in this paper.
\begin{prop} \cite[Thm.X.2.3.2 and Thm.XI.3.4.1]{convex_analy_book2}
\label{prop4}
Let $f,g \in \gmRn$. Assume the intersection of $\ri \dom f^*$ and $\ri \dom g^*$ is non-empty. Then $f\conv g\in \gmRn$ and $f\conv g = (f^*+g^*)^*$. Moreover, for any $x\in \dom f\conv g$, the optimization problem \cref{eqt:infconv} has at least one minimizer, and $\partial (f\conv g)(x) = \partial f(u) \cap \partial g(x-u)$ for any minimizer $u$.
\end{prop}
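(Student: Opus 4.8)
The plan is to reduce everything to the single conjugacy identity $(f\conv g)^* = f^*+g^*$ and then to extract the equality, the attainment, and the subdifferential rule from the Fenchel--Young relation in \cref{prop3}, invoking the qualification $\ri\dom f^* \cap \ri\dom g^* \neq \emptyset$ only where strong duality is genuinely needed. First I would compute the conjugate of the inf-convolution directly from \cref{eqt:infconv} and \cref{eqt:Legendre}:
\begin{equation*}
(f\conv g)^*(p) = \sup_{x,u\in\Rn}\left[\langle p, x\rangle - f(u) - g(x-u)\right] = \sup_{u,w\in\Rn}\left[\langle p,u\rangle - f(u) + \langle p,w\rangle - g(w)\right] = f^*(p) + g^*(p),
\end{equation*}
where the substitution $w = x-u$ decouples the two suprema. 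This identity holds for arbitrary $f,g$. Since $f,g\in\gmRn$ gives $f^*,g^*\in\gmRn$, and the hypothesis forces $\dom f^*\cap\dom g^*\neq\emptyset$, the sum $f^*+g^*$ is proper, convex and l.s.c., hence $f^*+g^*\in\gmRn$ and $h:=(f^*+g^*)^* \in \gmRn$. Conjugating the displayed identity yields $h = (f\conv g)^{**}$, and since the biconjugate lies below the function, I obtain the weak-duality bound $h \leq f\conv g$ pointwise.

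Next I would establish the reverse inequality together with attainment for $x\in\ri\dom h$, which is the crux. Since $h\in\gmRn$ is continuous on $\ri\dom h$, the subdifferential $\partial h(x)$ is nonempty there; pick $p\in\partial h(x)$. By \cref{prop3}, $x\in\partial h^*(p) = \partial(f^*+g^*)(p)$. Now the qualification $\ri\dom f^*\cap\ri\dom g^*\neq\emptyset$ is exactly the hypothesis of \cref{prop7} applied to $f^*$ and $g^*$, so $\partial(f^*+g^*)(p) = \partial f^*(p)+\partial g^*(p)$; thus $x = u + w$ with $u\in\partial f^*(p)$ and $w\in\partial g^*(p)$. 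Applying \cref{prop3} in the reverse direction gives $p\in\partial f(u)$ and $p\in\partial g(x-u)$, together with the Fenchel--Young equalities $f(u)+f^*(p)=\langle p,u\rangle$ and $g(x-u)+g^*(p)=\langle p,x-u\rangle$. Adding these and subtracting the Fenchel--Young equality $h(x)+f^*(p)+g^*(p)=\langle p,x\rangle$ for $h$ at $x$ produces $f(u)+g(x-u) = h(x)$. Hence the infimum in \cref{eqt:infconv} is attained at $u$ and $f\conv g(x)=h(x)$, which upgrades the weak-duality bound to equality.

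For the subdifferential formula I would argue entirely through \cref{prop3}, now that $f\conv g = h$ and a minimizer exists. Let $u$ be any minimizer, so $h(x)=f(u)+g(x-u)$, and recall $h^*=f^*+g^*$. For $p\in\Rn$ the relation $p\in\partial(f\conv g)(x)$ is equivalent to $h(x)+h^*(p)=\langle p,x\rangle$, i.e. to
\begin{equation*}
\left[f(u)+f^*(p)-\langle p,u\rangle\right] + \left[g(x-u)+g^*(p)-\langle p,x-u\rangle\right] = 0.
\end{equation*}
Each bracket is nonnegative by the Fenchel--Young inequality, so their vanishing sum forces each to vanish, which by \cref{prop3} means precisely $p\in\partial f(u)$ and $p\in\partial g(x-u)$. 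Reading the equivalence in both directions gives $\partial(f\conv g)(x) = \partial f(u)\cap\partial g(x-u)$ for every minimizer $u$.

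The main obstacle I anticipate is attainment at points $x$ on the relative boundary of $\dom h$, where $\partial h(x)$ may be empty and the subgradient argument of the second paragraph does not directly apply. I would handle these by the limiting device of \cref{prop1}: taking $y\in\ri\dom h$ and $x_t = x+t(y-x)$, one has $h(x)=\lim_{t\to0^+}h(x_t)$ with each $x_t$ admitting a minimizer $u_t$; the qualification condition controls the recession directions of $f^*+g^*$, which dually bounds the family $\{u_t\}$ and lets me extract a convergent subsequence whose limit is a minimizer for $x$. This recession and coercivity bookkeeping is routine but is the only genuinely technical step, and it is exactly where the relative-interior hypothesis cannot be dispensed with.
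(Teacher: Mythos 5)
This proposition is not proved in the paper at all: it is quoted as background from Hiriart-Urruty and Lemar\'echal (the cited Thm.~X.2.3.2 and Thm.~XI.3.4.1), so there is no in-paper argument to compare against and your proposal must stand on its own. Its first three paragraphs do: the computation $(f\conv g)^* = f^*+g^*$, the weak-duality bound $h:=(f^*+g^*)^*\leq f\conv g$, the use of \cref{prop7} applied to $f^*,g^*$ (this is indeed exactly where the relative-interior hypothesis enters), the Fenchel--Young bookkeeping giving equality and attainment on $\ri\dom h$, and the two-bracket argument for $\partial(f\conv g)(x) = \partial f(u)\cap\partial g(x-u)$ are all correct and efficiently organized.

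The genuine gap is the boundedness claim in your last paragraph: the qualification does \emph{not} bound the family $\{u_t\}$ of minimizers, because minimizer sets can be unbounded even when the hypothesis holds. Take $n=1$ and $f(u)=g(u)=u$; then $f^*=g^*=I_{\{1\}}$, so $\ri\dom f^*\cap\ri\dom g^*=\{1\}\neq\emptyset$, yet $u\mapsto f(u)+g(x-u)\equiv x$ is constant and \emph{every} $u\in\R$ is a minimizer, so an arbitrary selection $u_t$ need not have any convergent subsequence. What the qualification actually gives is structural: if $d\neq 0$ is a direction in which $\Phi_x(u):=f(u)+g(x-u)$ is nonincreasing at infinity, then the linear form $\langle\cdot,d\rangle$ separates $\dom f^*$ from $\dom g^*$; since their relative interiors meet, this separation must be improper, so both domains lie in a common hyperplane $\{p:\langle p,d\rangle=c\}$, whence $f$ and $g$ are affine along $d$ and $\Phi_x$ is \emph{constant} along $d$. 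These directions therefore form a subspace $L$, every minimizer set is invariant under translations by $L$, and the repair is a selection rule: take $u_t$ to be the minimizer lying in $L^{\perp}$ (equivalently, of minimal norm). A joint recession argument for $(u,y)\mapsto f(u)+g(y-u)$ with $y$ ranging over the compact segment $[x,y]$ then shows this selection is bounded, and your lower-semicontinuity passage to the limit goes through. Two smaller remarks: the same recession argument applied at the boundary point $x$ itself already yields attainment there (the limiting device is only needed to prove $f\conv g\leq h$, i.e.\ lower semicontinuity of $f\conv g$); and the relative-interior hypothesis is also indispensable in your second paragraph through \cref{prop7}, so it is not true that the last step is the only place it cannot be dispensed with.
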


\section{Properties of the Solutions to the Multi-time Hamilton-Jacobi Equations} \label{sec:prop}
In this section, we provide a representation formula for the minimizers in the Lax formula and highlight the relation of the minimizers and the momentum in the multi-time HJ equation. 
Also, we  investigate the variational behaviors of both the solution to the multi-time HJ equation and the corresponding momentum when time variables approach zero. Moreover, we also present a new result stating the variational behaviors of the velocities, which has not been developed before, even for the single-time case. Similar to the duality relation of the Hopf and Lax formulas, the cluster points of the minimizers and momentum solve two optimization problems, which are also dual to each other. An illustration is given in \update{the upper part of} \cref{fig:illustrationprop}.

\begin{sidewaysfigure}
\centering
\includegraphics[width=1\textheight,height=0.8\textwidth,keepaspectratio]{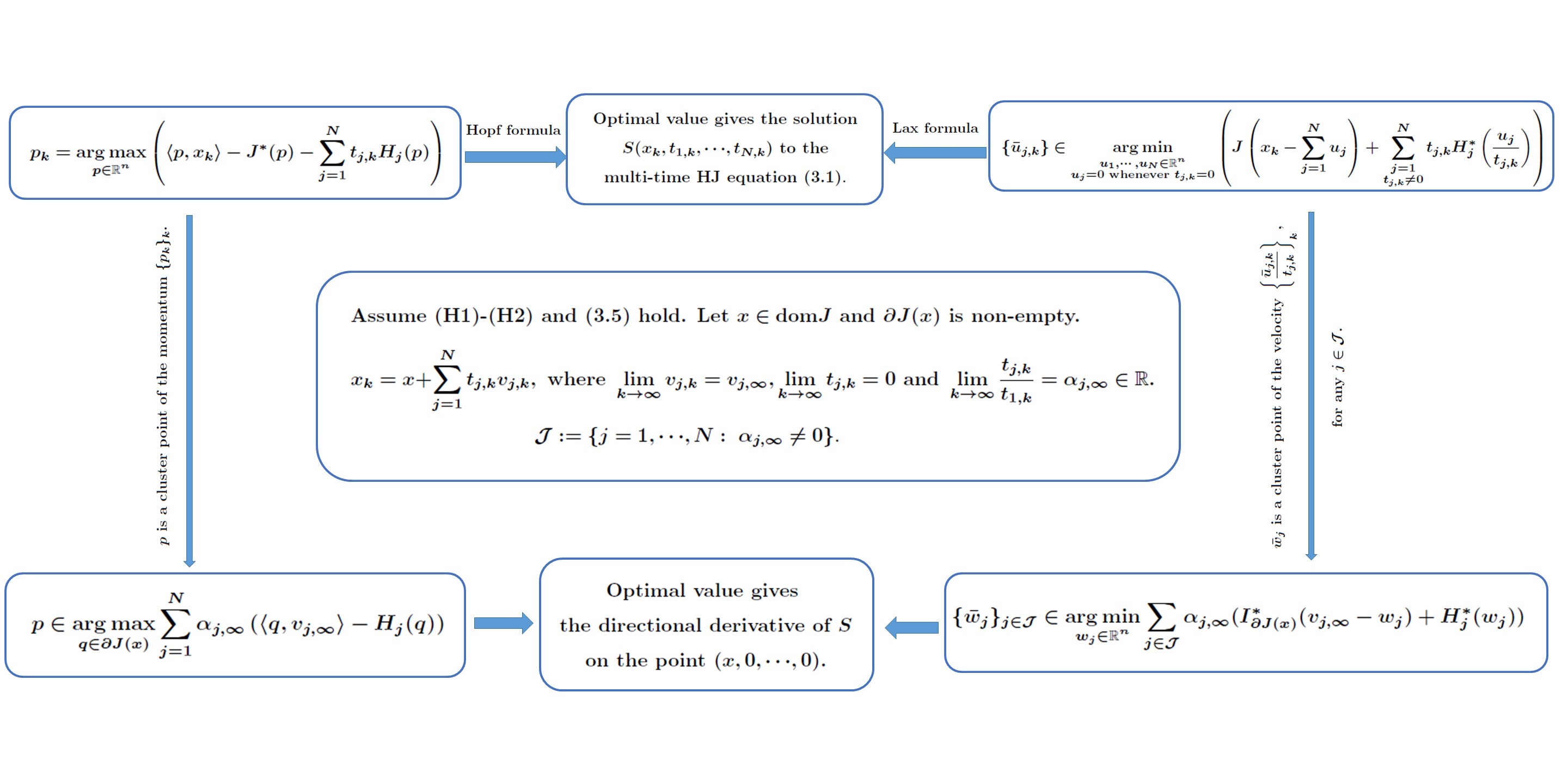}
\caption{This is an illustration for \cref{thm:conv_grad}. It shows the relation of four optimization problems and the multi-time HJ equation. Here, $\bar{u}_{j,k}$ denotes the minimizer $u_j(x_k, t_{1,k}. \cdots, t_{N,k}).$}
\label{fig:illustrationprop}
\end{sidewaysfigure}

We consider the solution $S(x,t_1,\cdots,t_N)$ to the following multi-time HJ equation
\begin{equation} \label{eqt:H-J}
\begin{cases}
\frac{\partial S}{\partial t_j} + H_j(\nabla_x S) = 0 \text{ for any }j\in\{1,\cdots, N\}, &x\in\Rn, t_1,\cdots,t_N>0; \\
S(x,0,\cdots,0) = J(x), &x\in \Rn.
\end{cases}
\end{equation}
Here, we only consider the multi-time HJ equations whose Hamiltonians only depend on the momentum $\nabla_x S$.
Several conditions are imposed on the Hamiltonians $\{H_j\}$ and the initial data $J$ in this section. 
To be specific, we assume
\setlist[enumerate]{leftmargin=.5in}
\setlist[itemize]{leftmargin=.5in}
\begin{itemize}
\item[(H1)] $H_j: \Rn\to \R$, is convex and 1-coercive for any $j=1,\cdots,N$. Moreover, at least one of them is strictly convex;
\item[(H2)] $J\in \gmRn$.
\end{itemize}
\setlist[enumerate]{leftmargin=.25in}
\setlist[itemize]{leftmargin=.25in}
From the assumption (H1), by \cref{prop6}, it is known that $H_j^*$ is also finite-valued, convex and 1-coercive for any $j=1,\cdots,N$. Moreover, at least one $H_j^*$ is differentiable.

\bigbreak
It is well known that in this case the unique classical solution is given by the Hopf formula \update{\cite{lions1986hopf, tho2005hopf} stated as follows}
\begin{equation} \label{eqt:hopf}
S_H(x,t_1,\cdots,t_N) := \left(J^* + \sum_{j=1}^N t_jH_j\right)^*(x) = \sup_{p\in\Rn} \left(\langle p,x\rangle - J^*(p) - \sum_{j=1}^N t_jH_j(p)\right),
\end{equation}
and the Lax formula \update{\cite{tho2005hopf} stated as follows}
\begin{equation}\label{eqt:lax}
\begin{split}
S_L(x,t_1,\cdots,t_N) :=& \left(J\conv (t_1H_1)^* \conv \cdots \conv (t_NH_N)^*\right)(x) \\
=& \inf_{\substack{u_1,\cdots,u_N\in\Rn\\u_j=0\text{ whenever } t_j=0}} \left( J\left(x-\sum_{j=1}^Nu_j\right) + \sum_{\substack{j=1\\t_j \neq 0}}^N t_jH_j^*\left(\frac{u_j}{t_j}\right)\right),
\end{split}
\end{equation}
for any $x\in\Rn$ and $t_1,\cdots, t_N\geq 0$. We extend $S_H$ and $S_L$ to the whole domain by simply setting the function values to $+\infty$ whenever the function value is not defined. \update{There are some physical interpretations of the HJ PDEs and the optimizers in the above two formulas. Given suitable Hamiltonians $\{H_j\}$ and a suitable initial condition $J$, the HJ PDE \cref{eqt:H-J} describes the movement of a particle. Roughly speaking, in a time interval with length $t_j$, a particle moves along the characteristic line of the $j-$th equation in the PDE system. The velocity in this time interval equals $\frac{u_j}{ t_j}$ where $(u_1,\cdots, u_N)$ denotes the minimizer in the Lax formula \cref{eqt:lax}. On the other hand, the maximizer in the Hopf formula \cref{eqt:hopf} gives the momentum of the particle, which coincides with the spatial gradient $\nabla_x S(x,t_1,\cdots, t_N)$. We refer the reader to \cite{Basdevant.07.book} for details about HJ PDEs and variational principles in physics.}

Under the assumptions (H1) and (H2), $S_H = S_L$, and the value is finite if there exists some $t_j>0$. In addition, the minimizers in the Lax formula \cref{eqt:lax} exist whenever the minimal value is finite. This result can be proved using \cref{prop4}. Also, by \cref{prop3}, it is not hard to check $S_H\in C^1(\Rn\times (0,+\infty)^N)$ and satisfies HJ equation \cref{eqt:H-J}. Moreover, the spatial gradient is the unique maximizer in the Hopf formula \cref{eqt:hopf}. To conclude, the Hopf and Lax formulas express the classical solution to the multi-time HJ equation as two optimization problems.
The Hopf formula provides a physical interpretation and has the momentum $\nabla_x S$ as the maximizer, while its dual problem in the Lax formula is in the same form as some decomposition models in imaging sciences.

The following proposition states that the solution is actually a convex function, hence the techniques in convex analysis can be applied to analyze the solution. The results hold even under weaker assumptions. Actually, a part of the proposition can be further generalized to the case when $J, H_j \in \gmRn$ and $\dom J^* \subseteq \dom H_j$ for any $j$.

\begin{prop}\label{prop:convex}
\update{Let $J, H_j\in \Gamma_0(\Rn)$ and $\dom H_j = \Rn$ for any $j$. Then, $S_H\in \Gamma_0(\mathbb{R}^{n+N})$, whose Legendre transform is given by
\begin{equation*}
S_H^*(p,E^-) = J^*(p) + \sum_{j=1}^N I\{E_j^- + H_j(p)\leq 0\},
\end{equation*}
for any $p\in\Rn$ and $E^- = (E_1^-,\cdots, E_N^-)\in\mathbb{R}^N$.
Here, $I\{\cdot\}$ denotes the indicator function.
Moreover, if the assumptions (H1)-(H2) are satisfied, then $S_H(x,t_1,\cdots, t_N)$ is finite for any $x\in\Rn$ and $t_1,\cdots,t_N\geq 0$ which are not all zero.}
\end{prop}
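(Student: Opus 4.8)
The plan is to recognize $S_H$, viewed as a function on $\mathbb{R}^{n+N}$, as the Legendre transform of the explicit candidate
\[
g(p,E^-) := J^*(p) + \sum_{j=1}^N I\{E_j^- + H_j(p)\leq 0\}, \qquad (p,E^-)\in \Rn\times\R^N,
\]
and then read off both assertions from biconjugation. Concretely, I would first show $g\in\Gamma_0(\mathbb{R}^{n+N})$, and then verify by a direct computation that $g^* = S_H$. Since the Legendre transform sends $\Gamma_0(\mathbb{R}^{n+N})$ into itself and satisfies $f^{**}=f$ there (as recalled in \cref{sec:bkgd}), this simultaneously yields $S_H = g^*\in\Gamma_0(\mathbb{R}^{n+N})$ and $S_H^* = g^{**} = g$, which is precisely the claimed formula.

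To check $g\in\Gamma_0(\mathbb{R}^{n+N})$ I would treat the three defining properties separately. For convexity, each constraint set $\{(p,E_j^-):\ E_j^-+H_j(p)\leq 0\}$ is the $0$-sublevel set of the jointly convex map $(p,E_j^-)\mapsto E_j^- + H_j(p)$ (affine in $E_j^-$, convex in $p$), hence convex, so its indicator is convex; adding the convex function $J^*$ keeps $g$ convex. For lower semicontinuity, $\dom H_j = \Rn$ forces each $H_j$ to be finite convex, hence continuous, so each sublevel set is closed and its indicator is l.s.c.; together with the l.s.c.\ function $J^*$ this makes $g$ l.s.c. For properness, since $J\in\Gamma_0(\Rn)$ we have $J^*\in\Gamma_0(\Rn)$ and thus $\dom J^*\neq\emptyset$; choosing $p_0\in\dom J^*$ and $E_j^- = -H_j(p_0)$ gives $g(p_0,E^-) = J^*(p_0) < +\infty$.

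The computation $g^* = S_H$ is where the only real care is needed. Writing out
\[
g^*(x,t) = \sup_{p\in\Rn}\Bigl(\langle p,x\rangle - J^*(p) + \sup\bigl\{\langle E^-, t\rangle:\ E_j^-\leq -H_j(p)\ \forall j\bigr\}\Bigr),
\]
the inner supremum separates across coordinates, and for each $j$ the one-dimensional problem $\sup\{E_j^- t_j:\ E_j^-\leq -H_j(p)\}$ equals $-t_jH_j(p)$ when $t_j\geq 0$ (attained at the boundary value $E_j^- = -H_j(p)$) and $+\infty$ when $t_j<0$. Hence $g^*(x,t)=+\infty$ off $\RNp$, matching the convention that $S_H$ is set to $+\infty$ where it is undefined, while for $t\in\RNp$ one recovers $\sup_p(\langle p,x\rangle - J^*(p) - \sum_j t_j H_j(p))$, which is exactly the Hopf formula. \textbf{The main obstacle is this case analysis:} one must confirm the boundary value is genuinely attained (so the inner supremum is exactly $-t_jH_j(p)$, not merely an approximating value) and that the $t_j<0$ direction really blows up, so that $g^*$ reproduces $S_H$ together with its $+\infty$ extension rather than some relaxation of it.

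For the final (``moreover'') claim, I would first note that $S_H\in\Gamma_0$ already excludes the value $-\infty$, so it remains only to bound $S_H(x,t)$ above when some $t_{j_0}>0$. By (H1) each $H_j$ is finite convex, hence minorized by an affine function, and by (H2) so is $J^*\in\Gamma_0(\Rn)$; summing these affine minorants with the nonnegative weights $t_j$ and isolating the coercive term shows $\Phi_t(p):= J^*(p)+\sum_j t_j H_j(p) \geq t_{j_0}H_{j_0}(p) + \langle \alpha,p\rangle + \beta$ for suitable $\alpha\in\Rn$, $\beta\in\R$. Since $t_{j_0}>0$ and $H_{j_0}$ is $1$-coercive, the right-hand side is $1$-coercive, so $\Phi_t$ is a $1$-coercive element of $\Gamma_0(\Rn)$. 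Applying \cref{prop6}, its conjugate $S_H(\cdot,t)=\Phi_t^*$ is finite-valued, which gives finiteness of $S_H(x,t)$ for every $x\in\Rn$.
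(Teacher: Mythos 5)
Your proposal is correct and follows essentially the same route as the paper's proof: both introduce the candidate function $F(p,E^-)=J^*(p)+\sum_{j=1}^N I\{E_j^-+H_j(p)\leq 0\}$, verify it lies in $\Gamma_0(\R^{n+N})$, compute its conjugate with the same case split (some $t_k<0$ forces the value $+\infty$, while for $t\in\RNp$ the inner suprema are attained at $E_j^-=-H_j(p)$ and recover the Hopf formula), and conclude $S_H=F^*$ and $S_H^*=F$ by biconjugation. The final finiteness claim is also handled identically, via $1$-coercivity of $J^*+\sum_j t_jH_j$ and \cref{prop6}; you merely spell out details (the $\Gamma_0$ membership and the affine-minorant bound) that the paper leaves as easy checks.
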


\begin{proof}
First, we prove that $S_H$ is the Legendre transform of $F$, where $F$ is defined by
\begin{equation*}
F(p,E^-) := J^*(p) + \sum_{j=1}^N I\{E_j^-+H_j(p)\leq 0\},
\end{equation*}
for any $p\in\Rn$ and any $E^-=(E_1^-,\cdots, E_N^-)\in \RN$. It is easy to check $F\in \Gamma_0(\R^{n+N})$.

By definition, for any $x\in\Rn$ and $t=(t_1,\cdots, t_N)\in\RN$,
\begin{equation} \label{eqt:formula_conjugate}
\begin{split}
F^*(x, t) = \sup_{p\in\Rn, E^-\in\RN} \left( \langle x,p\rangle + \sum_{j=1}^N t_jE_j^- - J^*(p) - \sum_{j=1}^N I\{E_j^-+H_j(p)\leq 0\}\right).
\end{split}
\end{equation}

First, we consider the case when there exists $k$ such that $t_k<0$. Take $p\in \dom J^*$. For any $j\neq k$, take $E_j^- = -H_j(p)$, which is a finite value. From the above equation, 
\begin{equation*}
F^*(x,t) \geq  \langle x, p \rangle + \sum_{j\neq k} t_jE_j^- - J^*(p) + \limsup_{\substack{E_k^-\leq -H_k(p)\\E_k^- \to -\infty}} t_kE_k^-  = +\infty.
\end{equation*}
Hence $F^*(x,t) = +\infty = S_H(x,t)$ if $t_k<0$ for some $k$.

Then, consider the case when $t_1,\cdots, t_N\geq 0$. Let $x\in\Rn$, from \cref{eqt:formula_conjugate}, we obtain
\begin{equation}
\begin{split}
F^*(x, t) &= \sup_{\substack{p\in\Rn\\ E_j^-\leq -H_j(p)\  \forall j}} \left( \langle x,p\rangle + \sum_{j:\ t_j>0} t_jE_j^- - J^*(p)\right)\\
&= \sup_{p\in\Rn}\left[ \sup_{E_j^-\leq -H_j(p)\  \forall j} \left( \langle x,p\rangle + \sum_{j:\ t_j>0} t_jE_j^- - J^*(p)\right)\right]\\
&= \sup_{p\in\Rn} \left(\langle x,p\rangle - \sum_{j:\ t_j>0} t_jH_j(p) - J^*(p)\right) = S_H(x,t).
\end{split}
\end{equation}

Therefore, $S_H = F^*$, which implies $S_H$ is a convex lower semi-continuous function and $F = S_H^*$. \update{Moreover, if there exists some $k$ such that $t_k>0$ and $t_j\geq 0$ for any $j\neq k$, then, by assumption (H1), we deduce that $J^* +\sum_jt_jH_j$ is 1-coercive, which, by \cref{prop6}, implies its Legendre transform $S_H(\cdot,t_1,\cdots, t_N)$ (with respect to x) is finite-valued.}
\end{proof}

By investigating $S_H$ on the boundary of the domain, the solution to a lower time dimensional equation is embedded in the solution to the higher time dimensional equation, in the sense that the restriction of $S_H$ on the subspace $\{(x,t_1,\cdots,t_N):\ t_j=0 \ \forall j\in J\}$ for any index set $J\subset \{1,\cdots,N\}$ is the solution to the corresponding lower time dimensional HJ equation with Hamiltonians $\{H_j\}_{j\not\in J}$. 

\bigbreak
The following proposition states a representation formula for the minimizers in the Lax formula. \update{In the decomposition model \cref{eqt:lax}, a given image $x$ is decomposed into different components including $u_1,\cdots, u_N$ and the residual $x-\sum_{j=1}^N u_j$. However, sometimes the primal minimization problem is difficult to solve, then the following proposition can be applied to compute $(u_1,\cdots, u_N)$ using the momentum $\nabla_x S_L(x,t_1,\cdots, t_N)$. In fact, the momentum is the maximizer of the dual problem in the Hopf formula \cref{eqt:hopf}. In other words, the following proposition gives the relation of the optimizers in the primal decomposition problem and the dual problem.}

\begin{prop} \label{thm:u_formula}
Suppose the assumptions (H1)-(H2) hold. Let $x\in\Rn, t_1,\cdots, t_N\geq 0$ and assume the time variables $\{t_j\}$ are not all zero. 
Denote $(u_1,\cdots, u_N)$ to be any minimizer of the minimization problem in \cref{eqt:lax} with parameters $x$ and $t_1,\cdots, t_N$. Here, each $u_j$ can be regarded as a function of $(x,t_1,\cdots, t_N)$. Then, for any $j$,
\begin{equation}
u_j(x,t_1,\cdots,t_N) \in t_j\partial H_j\left(\nabla_x S_L(x,t_1,\cdots,t_N)\right).
\end{equation}

Specifically, if a stronger assumption is imposed, say, all the Hamiltonians are differentiable, then the minimizer $(u_1,\cdots, u_N)$ is unique and satisfies
\begin{equation*}
 u_j(x,t_1,\cdots,t_N) = t_j\nabla H_j\left(\nabla_x S_L(x,t_1,\cdots,t_N)\right) \text{ for any }j.
\end{equation*}
\end{prop}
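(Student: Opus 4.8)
The plan is to reduce everything to the Fenchel--Young equality characterization of subgradients in \cref{prop3}, applied termwise to the Lax decomposition. Write $g_j := (t_jH_j)^*$, so that the Lax formula \cref{eqt:lax} reads $S_L = J\conv g_1\conv\cdots\conv g_N$, and a given minimizer $(u_1,\dots,u_N)$ together with the residual $v_0 := x-\sum_{j=1}^N u_j$ satisfies $S_L(x) = J(v_0) + \sum_{j=1}^N g_j(u_j)$, where for an index with $t_j=0$ one has $g_j = I_{\{0\}}$ and the constraint forces $u_j=0$. The first ingredient I need is the conjugate of $S_L$ in the $x$-variable. Iterating the identity $f\conv g = (f^*+g^*)^*$ from \cref{prop4}, and using that assumption (H1) makes every $H_j$ finite-valued so that $\dom g_j^* = \dom(t_jH_j) = \Rn$ (whence the relative-interior hypothesis of \cref{prop4} reduces at each stage to $\ri\dom J^*\neq\emptyset$, which is automatic since $J^*\in\gmRn$ is proper), I obtain $S_L^*(p) = J^*(p) + \sum_{j=1}^N t_jH_j(p)$, where I use $g_j^* = t_jH_j$ for every $j$, including the degenerate convention at $t_j=0$.

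The heart of the argument is then a single splitting step. Let $p := \nabla_x S_L(x,t_1,\dots,t_N)$, which is the unique subgradient in $\partial_x S_L(x)$ at a point of differentiability; since $p\in\partial_x S_L(x)$, \cref{prop3} yields the Fenchel--Young equality $S_L(x) + S_L^*(p) = \langle p,x\rangle$. Substituting the two expressions above and expanding $\langle p,x\rangle = \langle p,v_0\rangle + \sum_j\langle p,u_j\rangle$, I rewrite this as
\begin{equation*}
\big[J(v_0)+J^*(p)-\langle p,v_0\rangle\big] + \sum_{j=1}^N\big[g_j(u_j)+g_j^*(p)-\langle p,u_j\rangle\big] = 0.
\end{equation*}
Each bracket is nonnegative by the Fenchel--Young inequality (the inequality direction of \cref{prop3}), so a sum of nonnegative numbers that vanishes forces every bracket to vanish. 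In particular $g_j(u_j)+g_j^*(p) = \langle p,u_j\rangle$, and the equality case of \cref{prop3} applied to $g_j$ gives $u_j\in\partial g_j^*(p) = \partial(t_jH_j)(p) = t_j\partial H_j(p)$, which is exactly $u_j\in t_j\partial H_j(\nabla_x S_L)$. The degenerate indices are consistent: when $t_j=0$ the right-hand side is $0\cdot\partial H_j(p)=\{0\}$ (note $\partial H_j(p)\neq\emptyset$ since $H_j$ is finite and convex), and indeed $u_j=0$.

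For the sharpened statement, if every $H_j$ is differentiable then $\partial H_j(p)$ is the singleton $\{\nabla H_j(p)\}$, so the inclusion collapses to $u_j = t_j\nabla H_j(\nabla_x S_L)$; since this pins each $u_j$ down in terms of the single momentum $p$, every minimizer must equal this common value, giving uniqueness of $(u_1,\dots,u_N)$. The main thing I expect to watch is the well-definedness of $\nabla_x S_L$ on the boundary where some $t_j=0$: assumption (H1) guarantees strict convexity of only one Hamiltonian, so if its time variable is inactive then $S_L^*$ need not be strictly convex and $S_L(\cdot,t)$ need not be differentiable there. This is not fatal to the method, however, since the splitting step uses only that $p$ is \emph{some} element of $\partial_x S_L(x)$; it therefore proves the inclusion for every such $p$, and $\nabla_x S_L$ is read as the momentum wherever $S_L$ is differentiable. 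The remaining care-points---iterating \cref{prop4} with its relative-interior hypothesis, the positive-scaling identities $(t_jH_j)^* = t_jH_j^*(\cdot/t_j)$ and $\partial(t_jH_j)=t_j\partial H_j$, and the $t_j=0$ bookkeeping---are routine.
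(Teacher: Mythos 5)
Your proof is correct, and it takes a genuinely different route from the paper's. The paper stays on the primal side: it applies the subdifferential rule for inf-convolutions (the ``Moreover'' part of \cref{prop4}) inductively to get $\partial_x S_L(x,t_1,\cdots,t_N) = \partial J\left(x-\sum_j u_j\right)\cap\bigcap_{j}\partial H_j^*(u_j/t_j)$, notes that this intersection contains at most one element because some $H_j^*$ is differentiable (the strictly convex Hamiltonian, via \cref{prop6}) and is nonempty because $S_L(\cdot,t_1,\cdots,t_N)$ is finite-valued and convex --- this is exactly how the paper obtains differentiability of $S_L$ in $x$ --- and then flips $\nabla_x S_L\in\partial H_j^*(u_j/t_j)$ into $u_j/t_j\in\partial H_j(\nabla_x S_L)$ with \cref{prop3}. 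You instead use only the conjugacy part of \cref{prop4} (to identify $S_L^* = J^*+\sum_j t_jH_j$ in the $x$-variable) and then run the Fenchel--Young splitting; this is more elementary and yields a slightly stronger conclusion, namely $u_j\in t_j\partial H_j(p)$ for \emph{every} $p\in\partial_x S_L(x)$, which also streamlines the uniqueness argument in the differentiable case. What the paper's route delivers and yours leaves out is precisely the differentiability of $S_L$ in $x$, which the notation $\nabla_x S_L$ in the statement presupposes; in your framework this is a one-line addendum --- by your conjugate formula, $S_L(\cdot,t_1,\cdots,t_N)^* = J^*+\sum_j t_jH_j$ is strictly convex on its domain whenever the strictly convex Hamiltonian has $t_j>0$, so $S_L(\cdot,t_1,\cdots,t_N)$ is differentiable by \cref{prop6} --- and you should add it to close the argument. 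The boundary case you flag (the strictly convex Hamiltonian having $t_j=0$) is a genuine subtlety of the statement itself, and it afflicts the paper's proof equally: in that case the differentiable $H_j^*$ contributes nothing to the intersection above, so the ``at most one element'' claim is also unjustified there; your every-subgradient formulation is the honest fix.
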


\begin{proof}
Since $\dom H_j = \Rn$ for each $j$, by \cref{prop4} and induction, the minimizers $u_j$ exist if $S_L(x,t_1,\cdots, t_N)<+\infty$, and 
\begin{equation} \label{eqt:lax_diff}
\begin{split}
\partial_x S_L(x, t_1,\cdots, t_N) 
&= \partial J\left(x - \sum_{j=1}^N u_j\right) \bigcap \left(\bigcap\limits^N_{j=1} \partial \left(t_jH_j^*\left(\frac{\cdot}{t_j}\right)\right)(u_j)\right)\\
&= \partial J\left(x - \sum_{j=1}^N u_j\right) \bigcap \left(\bigcap\limits^N_{j=1} \partial H_j^*\left(\frac{u_j}{t_j}\right)\right).
\end{split}
\end{equation}
From the assumption (H1), there exists some $j$ such that $H_j^*$ is differentiable, hence the intersection above contains at most one element. On the other hand, $\partial_x S_L$ is non-empty in the interior of the domain of $S_L(\cdot, t_1,\cdots, t_N)$, which is the whole space $\Rn$ because $S_L=S_H$ is finite-valued when the time variables are not all zero. 
Therefore, the above intersection contains exactly one element. In other words, $S_L$ is differentiable with respect to $x$ for any $t_1, \cdots, t_N\geq 0$ which are not all zero and $x\in \Rn$. Moreover, by \cref{eqt:lax_diff}, $\nabla_x S_L\in \partial H_j^*(u_j/t_j)$, which implies $u_j/t_j \in \partial H_j(\nabla_x S_L(x, t_1, \cdots, t_N))$ for any $j$.      
\end{proof}                                                                                                                      

\bigbreak

In the remaining part of this section, we investigate the multi-time HJ equation \cref{eqt:H-J} and the minimization problem \cref{eqt:lax} in a variational point of view. \update{To be specific, let $v_{j,k}\in \Rn$ and $t_{j,k}>0$ for any $j\in \{1,\cdots, N\}$ and $k\in \N$
such that they satisfy
$\lim_{k\to+\infty} t_{j,k} = 0$ and $\lim_{k\to+\infty}v_{j,k} = \vlimj$ for any $j$.
Let $x\in\Rn$ and $x_k = x + \sum_{j=1}^N t_{j,k}v_{j,k}$ for any $k$.}
We are interested in the convergence behavior of the momentum $\nabla_x S_H$ and the minimizers $u_j$ evaluated at $(x_k, t_{1,k},\cdots, t_{N,k})$. We will demonstrate one application in \cref{sec:degenerate}. 

Among all the sequences $\{t_{j,k}\}_k,j=1,\cdots,N$, by taking subsequences, we can assume there is a sequence with the lowest convergence rate. According to the symmetry of the time variables, without loss of generality, we can assume $\{t_{1,k}\}_k$ \update{is the slowest sequence converging to zero compared to $\{t_{j,k}\}_k$ for any $j>1$, i.e., we assume that $\left\{\frac{t_{j,k}}{t_{1,k}}\right\}_k$ has a finite limit denoted as $\alplimj\in\R$ for any $j$.}
In summary, the following notations and assumptions are adopted:
\begin{equation}\label{eqt:seq_notation}
\left\{
\begin{aligned}
&x_k = x + \sum_{j=1}^N t_{j,k}v_{j,k}, \text{ where }t_{j,k}>0,\ x, v_{j,k}\in\Rn \text{ for any } j\in\{1,\cdots,N\} \text{ and } k\in \N;\\
&\lim_{k\to+\infty} t_{j,k} = 0\text{ and } \lim_{k\to+\infty}v_{j,k} = \vlimj;\\
&\lim_{k\to+\infty} \frac{t_{j,k}}{t_{1,k}} = \alplimj\in\R.
\end{aligned}
\right.
\end{equation}
\update{In the decomposition models, $\{x_k\}$ is given by a sequence of observed images. In each $x_k$ there is a constant component denoted by $x$ and several other components denoted by $t_{j,k}v_{j,k}$ for $j=1,\cdots, N$. In the remaining part of this section, we investigate the behavior of the minimizers of the decomposition model in \cref{eqt:lax} when the components $t_{j,k}v_{j,k}$ converge to zero and the parameters $t_{j,k}$ in the model vanish.}

First, we show the convergence of $u_j$ to zero\update{, which is stated in (i) in the following proposition}. \update{In other words, the decomposition model recovers the constant component $x$ when the other components $t_{j,k}v_{j,k}$ and the parameters $t_{j,k}$ in the model converge to zero. Then, (ii) and (iii) in the following proposition are technical results about the convergence rate, which will be used in later proofs.}

\begin{prop}\label{thm:u_conv}
Assume (H1)-(H2) and \cref{eqt:seq_notation} hold. Let $(u_1,\cdots, u_N)$ be any minimizer of the minimization problem in \cref{eqt:lax}. Let $x\in \dom J$. Then,
\begin{itemize}
    \item[(i)] For any $j=1,\cdots,N$,
    \begin{equation}
\lim_{k\to+\infty} u_j(x_k, t_{1,k},\cdots, t_{N,k}) = 0.
\end{equation}
\item[(ii)] If $\partial J(x)\neq \emptyset$ and $\alplimj= 0$, then
\begin{equation*}
    \lim_{k\to+\infty}\frac{1}{t_{1,k}}u_j(x_k, t_{1,k},\cdots,t_{N,k}) = 0.
\end{equation*}
\item[(iii)] If $\partial J(x)\neq \emptyset$ and $\alplimj\neq 0$, then the sequence $\left\{\frac{1}{t_{j,k}}u_j(x_k, t_{1,k},\cdots,t_{N,k})\right\}_k$ is bounded.
\end{itemize}
\end{prop}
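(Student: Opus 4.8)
The plan is to sandwich the optimal value $S_L(x_k,t_{1,k},\dots,t_{N,k})$ between an explicit upper bound and coercive lower bounds, and then read off the behavior of the minimizers from the gap. For the upper bound I would feed in the \emph{feasible} choice $u_j=t_{j,k}v_{j,k}$, which by the defining relation $x_k=x+\sum_j t_{j,k}v_{j,k}$ makes the residual $x_k-\sum_j u_j$ equal to $x$ exactly, so that
\[
S_L(x_k,t_{1,k},\dots,t_{N,k})\;\le\; J(x)+\sum_{j=1}^N t_{j,k}\,H_j^*(v_{j,k}).
\]
By (H1) and \cref{prop6} each $H_j^*$ is finite-valued (hence continuous) and $1$-coercive; since $t_{j,k}\to 0$ and $v_{j,k}\to\vlimj$, the right-hand side tends to $J(x)$, giving $\limsup_k S_L(x_k,t_{1,k},\dots,t_{N,k})\le J(x)<\infty$ because $x\in\dom J$.

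For (i) I would write $w_k:=x_k-\sum_j u_j$ for the residual at a minimizer and use a global affine minorant $J\ge\langle p_0,\cdot\rangle-c$ together with the coercivity bound $H_j^*(z)\ge\alpha\|z\|-\beta_\alpha$. This yields $S_L\ge(\alpha-\|p_0\|)\sum_j\|u_j\|-(\text{bounded})$; choosing $\alpha>\|p_0\|$ and invoking the upper bound shows $\{u_j\}_k$ is bounded. Then along any subsequence with $u_j\to\bar u_j$, lower semicontinuity controls $J(w_k)$ from below, while if some $\bar u_j\neq 0$ the term $t_{j,k}H_j^*(u_j/t_{j,k})=\|u_j\|\,H_j^*(z_{j,k})/\|z_{j,k}\|$ with $z_{j,k}=u_j/t_{j,k}$ diverges to $+\infty$ by $1$-coercivity (as $\|z_{j,k}\|\to\infty$), contradicting $\limsup_k S_L\le J(x)$. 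Hence every subsequential limit is $0$, so $u_j\to 0$.

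For (ii) and (iii) the extra hypothesis $\partial J(x)\neq\emptyset$ lets me replace the global minorant by the sharp tangent one: picking $p\in\partial J(x)$ gives $J(w_k)\ge J(x)+\langle p,w_k-x\rangle$. Combining this with the upper bound and the Young--Fenchel inequality $H_j^*(z)+H_j(p)\ge\langle p,z\rangle$, and regrouping by index $j$, everything telescopes into the Bregman-type estimate
\[
\sum_{j=1}^N t_{j,k}\,h_j\!\left(\tfrac{u_j}{t_{j,k}}\right)\;\le\;\sum_{j=1}^N t_{j,k}\,h_j(v_{j,k}),\qquad h_j(z):=H_j^*(z)-\langle p,z\rangle+H_j(p)\ge 0,
\]
where each $h_j$ is nonnegative and $1$-coercive. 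Dividing by the slowest scale $t_{1,k}$ and using $t_{j,k}/t_{1,k}\to\alplimj$ with $t_{j,k}h_j(v_{j,k})\to 0$ keeps the right-hand side bounded by some constant $C$; since each left-hand summand is nonnegative I can isolate index $j$ to get $(t_{j,k}/t_{1,k})\,h_j(u_j/t_{j,k})\le C$. When $\alplimj\neq 0$ the prefactor is bounded below, so $h_j(u_j/t_{j,k})$ is bounded and $1$-coercivity forces $\{u_j/t_{j,k}\}_k$ bounded, proving (iii). When $\alplimj=0$, set $s_k=t_{j,k}/t_{1,k}\to 0$ and $z_k=u_j/t_{j,k}$, so that $u_j/t_{1,k}=s_kz_k$ and $s_k h_j(z_k)\le C$: if $\{\|z_k\|\}$ stays bounded then $s_k\|z_k\|\to 0$ trivially, and if $\|z_k\|\to\infty$ then $s_k\|z_k\|=s_kh_j(z_k)\cdot\|z_k\|/h_j(z_k)\le C\,\|z_k\|/h_j(z_k)\to 0$ by $1$-coercivity; either way $u_j/t_{1,k}\to 0$, proving (ii).

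The main obstacle is the genuinely multi-scale nature of the limit: the times $t_{j,k}$ may vanish at different rates, so I cannot simply divide the master inequality by a single common factor and pass to the limit. I must normalize by the slowest scale $t_{1,k}$ and then treat the regimes $\alplimj\neq 0$ and $\alplimj=0$ separately, the latter requiring the coercivity trick above to push $s_k\|z_k\|\to 0$ even when $\|z_k\|$ blows up. The second delicate point is the nonsmoothness of $J$: the global affine minorant suffices for the qualitative statement (i) but is too crude for the sharp rate statements (ii)--(iii), and it is precisely the tangent minorant at $x$ (available because $\partial J(x)\neq\emptyset$) that produces the clean nonnegative functions $h_j$ and makes the coercivity estimates go through.
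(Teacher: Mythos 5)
Your proof is correct, and although it is assembled from the same two raw ingredients as the paper's proof --- the feasible-point upper bound $S_L(x_k,t_{1,k},\dots,t_{N,k})\le J(x)+\sum_{j=1}^N t_{j,k}H_j^*(v_{j,k})$ obtained by testing $u_j=t_{j,k}v_{j,k}$, and subgradient minorants of $J$ --- the execution is genuinely different. Writing $u_{j,k}$ for $u_j(x_k,t_{1,k},\dots,t_{N,k})$, the paper runs all three parts through a single master inequality: it introduces the index set $I=\{j:\ \{\|u_{j,k}\|/t_{j,k}\}_k\ \text{unbounded}\}$, bounds the terms with $j\in I$ below by $M\|u_{j,k}\|$ for an arbitrarily large constant $M$ (1-coercivity of $H_j^*$), and lets $M$ grow to force $\|u_{j,k}\|\to 0$ (part (i), with an affine minorant at an arbitrary $z$) and $\|u_{j,k}\|/t_{1,k}\to 0$ for every $j\in I$ (part (ii), with the tangent minorant at $x$); part (iii) is then deduced by contraposition from that intermediate fact. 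You instead prove (i) by a compactness argument (boundedness of the minimizers, then every cluster point must vanish since otherwise $t_{j,k}H_j^*(u_{j,k}/t_{j,k})\to+\infty$), and for (ii)--(iii) you symmetrize the master inequality via Fenchel--Young into the nonnegative, 1-coercive gap functions $h_j(z)=H_j^*(z)-\langle p,z\rangle+H_j(p)$, which lets you isolate each index separately: (iii) follows directly from bounded sublevel sets of $h_j$, and (ii) from the estimate $s_k\|z_k\|\le C\,\|z_k\|/h_j(z_k)$. Your regrouping buys cleaner bookkeeping --- no index set $I$, no arbitrary-$M$ limiting argument, and direct rather than contrapositive proofs --- while the paper's single-inequality route yields a slightly stronger intermediate statement ($\|u_{j,k}\|/t_{1,k}\to 0$ for every $j$ with unbounded ratio, irrespective of the value of $\alplimj$). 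Two small points to tighten: in your case analysis for (ii) the sequence $\{\|z_k\|\}_k$ need be neither bounded nor divergent, so the dichotomy should be run along subsequences (every subsequence has a further subsequence in one of the two regimes, and both give limit $0$); and in the divergent regime one needs $h_j(z_k)>0$ for large $k$ before dividing, which 1-coercivity supplies. Both are routine repairs and do not affect correctness.
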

\def\yb{\bar{u}}
\begin{proof}
Denote $\yb_{j,k} := u_j(x_k,t_{1,k},\cdots, t_{N,k})$ for any $j =1,\cdots, N$, and $\yb_{0,k} := x_k - \sum_{j=1}^N \yb_{j,k}$. Define $I := \{j: \{\|\yb_{j,k}\|/t_{j,k}\}_k \text{ is not bounded}\}$. \update{Recall that for each $j=1,\cdots, N$, $\{v_{j,k}\}_k\subset \R^n$ and $\{t_{j,k}\}_k\subset (0,+\infty)$ are two sequences satisfying $\lim_{k\to +\infty} v_{j,k} = v_{j,\infty}$ and $\lim_{k\to +\infty} t_{j,k} = 0$, respectively. And the $k-th$ spatial variable $x_k$ is defined to be $x + \sum_{j=1}^N t_{j,k}v_{j,k}$.}

Proof of (i):
By Lax formula \cref{eqt:lax}, 
\begin{equation} \label{eqt:15}
\begin{split}
J(\yb_{0,k}) + \sum_{j=1}^N t_{j,k}H_j^*\left(\frac{\yb_{j,k}}{t_{j,k}}\right)
&\leq 
J\left(x_k-\sum_{j=1}^N t_{j,k}v_{j,k}\right) + \sum_{j=1}^N t_{j,k}H_j^*(v_{j,k}) \\
&= J(x) + \sum_{j=1}^N t_{j,k}H_j^*(v_{j,k}).
\end{split}
\end{equation}
Since $J$ is a convex function, there exists $z\in \dom J$ such that $\partial J(z)\neq \emptyset$. Let $q\in \partial J(z)$. Then, using the convexity of $J$ and Cauchy-Schwarz inequality, we get
\begin{equation}\label{eqt:16}
J(\yb_{0,k}) \geq J(z)+\langle q, \ \yb_{0,k} - z\rangle \geq J(z) - \|q\|\sum_{j=1}^N \|\yb_{j,k}\| - \|q\| \|x_k-z\|.
\end{equation}
Combining \cref{eqt:15} and \cref{eqt:16}, we get
\begin{equation}\label{eqt:proof_u_conv_1}
\sum_{j=1}^N t_{j,k} H_j^*\left(\frac{\yb_{j,k}}{t_{j,k}}\right)
\leq J(x)-J(z) + \sum_{j=1}^N t_{j,k}H_j^*(v_{j,k}) + \|q\|\sum_{j=1}^N \|\yb_{j,k}\| + \|q\| \|x_k-z\|.
\end{equation}

For any $j\in I$, since $\|\yb_{j,k}\|/t_{j,k}$ is not bounded, without loss of generality, by taking subsequences, we can assume $\|\yb_{j,k}\|/t_{j,k}$ increases to infinity.
Since $H_j^*$ is 1-coercive, for any $M>0$, there exists $K$ such that for any $k>K$, $H_j^*(\yb_{j,k}/t_{j,k}) \geq M\|\yb_{j,k}\|/t_{j,k}$. Together with \cref{eqt:proof_u_conv_1}, we get
\begin{equation}\label{eqt:2}
\begin{split}
&\ \sum_{j\in I} (M-\|q\|)\|\yb_{j,k}\| \leq \sum_{j\in I} \left(t_{j,k} H_j^*\left(\frac{\yb_{j,k}}{t_{j,k}}\right)-\|q\|\|\yb_{j,k}\|\right)\\
\leq &\ J(x)-J(z) +\|q\| \|x_k-z\|+ \sum_{j=1}^N t_{j,k}H_j^*(v_{j,k})
 + \sum_{j\not\in I} \left(\|q\|\|\yb_{j,k}\|  - t_{j,k} H_j^*\left(\frac{\yb_{j,k}}{t_{j,k}}\right)\right).
\end{split}
\end{equation}
Since $\{t_{j,k}\}_k$ and $\{v_{j,k}\}_k$ are bounded, and $H_j^*$ is continuous in $\Rn$ for any $j$, then the right hand side is bounded. However, $M$ can be arbitrarily large, then the boundedness of left hand side (deduced by the boundedness of the right hand side) implies $\|\yb_{j,k}\|\to 0$ for any $j\in I$. If $j\not\in I$, then $\|\yb_{j,k}\|/t_{j,k}$ is bounded by the definition of $I$, hence $\yb_{j,k}$ also converges to zero.

\bigbreak

Proof of (ii): We can apply the same argument as above and set $z=x$, because $\partial J(x) \neq \emptyset$. From \cref{eqt:2}, using the definition of $x_k$ in \cref{eqt:seq_notation} and triangle inequality, we have
\begin{equation*}
\begin{split}
\sum_{j\in I} (M-\|q\|)\|\yb_{j,k}\| &\leq \|q\| \|x_k-x\|+ \sum_{j=1}^N t_{j,k}H_j^*(v_{j,k}) + \sum_{j\not\in I} \left(\|q\|\|\yb_{j,k}\|  - t_{j,k} H_j^*\left(\frac{\yb_{j,k}}{t_{j,k}}\right)\right)\\
&\leq 
\sum_{j=1}^N t_{j,k}\left(H_j^*(v_{j,k}) + \|q\|\|v_{j,k}\|\right) + \sum_{j\not\in I} t_{j,k}\left(\|q\|\left\|\frac{\yb_{j,k}}{t_{j,k}}\right\|  -  H_j^*\left(\frac{\yb_{j,k}}{t_{j,k}}\right)\right).
\end{split}
\end{equation*}
Dividing both sides by $t_{1,k}$, we can obtain
\begin{equation*}
\begin{split}
(M-\|q\|)\sum_{j\in I} \frac{\|\yb_{j,k}\|}{t_{1,k}} \leq 
&\sum_{j=1}^N \frac{t_{j,k}}{t_{1,k}}\left(H_j^*(v_{j,k}) + \|q\|\|v_{j,k}\|\right) \\
&+ \sum_{j\not\in I} \frac{t_{j,k}}{t_{1,k}}\left(\|q\|\left\|\frac{\yb_{j,k}}{t_{j,k}}\right\|  -  H_j^*\left(\frac{\yb_{j,k}}{t_{j,k}}\right)\right).
\end{split}
\end{equation*}
With the same argument as in the proof of (i), we deduce that the right hand side is bounded, while $M$ can be arbitrarily large. Therefore, $\|\yb_{j,k}\| / t_{1,k}$ converges to zero for any $j\in I$. If $j\not\in I$ and $\alplimj =0$, then $\|\yb_{j,k}\| / t_{j,k}$ is bounded by the definition of $I$ and $t_{j,k}/t_{1,k}$ converges to zero by the definition of $\alplimj$, hence $\|\yb_{j,k}\| / t_{1,k}$ also converges to zero.

\bigbreak
Proof of (iii):
It suffices to prove the contrapositive statement. To be specific, let $j\in I$, i.e. $\|\yb_{j,k}\| / t_{j,k}$ is unbounded, it suffices to prove $\alplimj = 0$.
In the proof of (ii), we know that $\|\yb_{j,k}\| / t_{1,k}$ converges to zero if $j\in I$. Then, the unboundedness of $\{\yb_{j,k}/t_{j,k}\}_k$ implies that $t_{j,k}/t_{1,k}$ converges to $0$, hence $\alplimj = 0$ and (iii) is proved.
\end{proof}
\bigbreak

\update{Similarly, we also consider the maximizers $\nabla_x S_H$ in the dual problem \cref{eqt:hopf} with the observed data $x_k$ and the parameters $\{t_{j,k}\}_{j=1}^N$. The following lemma states the boundedness of the maximizers $\{\nabla_x S_H(x_k,t_{1,k},\cdots,t_{N,k})\}_k$ which will be used in the later proofs.}

\begin{lem}\label{lem:bdd}
Under the assumptions (H1)-(H2) and \cref{eqt:seq_notation}, for any $x\in \dom J$ such that $\partial J(x)\neq \emptyset$,
the sequence $\{\nabla_x S_H(x_k,t_{1,k},\cdots,t_{N,k})\}_k$ is bounded and any cluster point $p$ is in $\partial J(x)$.
\end{lem}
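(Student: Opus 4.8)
The plan is to connect the Hopf-formula maximizer $p_k := \nabla_x S_H(x_k, t_{1,k}, \cdots, t_{N,k})$ to the Lax-formula minimizers through the subgradient relations already extracted in the proof of \cref{thm:u_formula}. Writing $\bar u_{j,k} := u_j(x_k, t_{1,k}, \cdots, t_{N,k})$ and $\bar u_{0,k} := x_k - \sum_{j=1}^N \bar u_{j,k}$, recall that since each $t_{j,k}>0$ the value $S_L = S_H$ is finite and differentiable in $x$, so \cref{eqt:lax_diff} yields the two memberships
\begin{equation*}
p_k \in \partial J(\bar u_{0,k}) \qquad \text{and} \qquad p_k \in \partial H_j^*\!\left(\frac{\bar u_{j,k}}{t_{j,k}}\right) \ \text{ for every } j .
\end{equation*}
These are the only structural facts I will use. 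First I would record that $\bar u_{0,k}\to x$: indeed $x_k\to x$ because $t_{j,k}\to 0$ and $\{v_{j,k}\}$ is bounded, while $\bar u_{j,k}\to 0$ for every $j$ by \cref{thm:u_conv}(i).

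For boundedness, the key observation is that the first time-sequence is by construction the slowest, so $\alpha_{1,\infty} = \lim_k t_{1,k}/t_{1,k} = 1 \neq 0$. Hence \cref{thm:u_conv}(iii) applies with $j=1$ and shows that $\{\bar u_{1,k}/t_{1,k}\}_k$ is bounded, say contained in a ball $B$. I would then invoke the membership $p_k \in \partial H_1^*(\bar u_{1,k}/t_{1,k})$: since $H_1$ is $1$-coercive, $H_1^*$ is finite-valued on all of $\Rn$ (\cref{prop6}), hence continuous (\cref{prop1}) and locally Lipschitz, so its subdifferential maps the bounded set $B$ to a bounded set. Therefore $\{p_k\}\subset \partial H_1^*(B)$ is bounded. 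I expect this step, namely extracting boundedness of the momentum from the boundedness of the single velocity $\bar u_{1,k}/t_{1,k}$, to be the crux of the lemma; the alternative, fully self-contained route is to argue by contradiction, using that the $1$-coercivity of $H_1$ forces $\|w\|\to\infty$ for $w\in\partial H_1(p)$ as $\|p\|\to\infty$, which would contradict the boundedness given by \cref{thm:u_conv}(iii) of the sequence $\{\bar u_{1,k}/t_{1,k}\}$, whose terms lie in $\partial H_1(p_k)$ by \cref{prop3}.

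Finally, for a cluster point $p$ with $p_{k_m}\to p$ (which exists by boundedness), I would pass to the limit in the subgradient inequality
\begin{equation*}
J(y) \geq J(\bar u_{0,k_m}) + \langle p_{k_m},\, y - \bar u_{0,k_m}\rangle, \qquad y\in\Rn,
\end{equation*}
valid because $p_{k_m}\in\partial J(\bar u_{0,k_m})$. Using $\bar u_{0,k_m}\to x$, $p_{k_m}\to p$, and the lower semicontinuity of $J$ (so that $\liminf_m J(\bar u_{0,k_m})\geq J(x)$), the right-hand side passes to the limit to give $J(y)\geq J(x) + \langle p, y-x\rangle$ for all $y\in\Rn$, i.e. $p\in\partial J(x)$. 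This completes the argument; note that only assumptions (H1)--(H2), the normalization $\partial J(x)\neq\emptyset$, and the already-established \cref{thm:u_conv} and \cref{thm:u_formula} are needed.
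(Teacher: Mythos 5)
Your proof is correct, but it takes a genuinely different route from the paper's. The paper argues entirely on the dual (Hopf) side and is self-contained: for boundedness it compares the Hopf objective at $p_k$ with its value at a fixed $q\in\partial J(x)$, uses $x\in\partial J^*(q)$ to absorb the $J^*$ terms, divides by $t_{1,k}$, and lets the $1$-coercivity of the $H_j$ force a contradiction when $\|p_k\|\to+\infty$; for the cluster point it notes that the full gradient $\nabla S_H(x_k,t_{1,k},\cdots,t_{N,k})=(p_k,-H_1(p_k),\cdots,-H_N(p_k))$ converges and invokes the closedness of the graph of $\partial S_H$ to conclude that the limit lies in $\partial S_H(x,0,\cdots,0)$, hence $p\in\partial J(x)$. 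You instead work on the primal (Lax) side, recycling \cref{thm:u_formula} and \cref{thm:u_conv}: boundedness of $\{p_k\}_k$ is inherited from boundedness of the velocity $\bar{u}_{1,k}/t_{1,k}$ (valid since $\alpha_{1,\infty}=1\neq 0$) through the conjugate relation $p_k\in\partial H_1^*\left(\bar{u}_{1,k}/t_{1,k}\right)$, and the cluster-point claim follows from $p_k\in\partial J(\bar{u}_{0,k})$, $\bar{u}_{0,k}\to x$, and passing to the limit in the subgradient inequality using the lower semicontinuity of $J$. There is no circularity, since \cref{thm:u_formula} and \cref{thm:u_conv} are established before and independently of \cref{lem:bdd}. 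What each approach buys: the paper's proof needs neither the existence of Lax minimizers nor \cref{thm:u_conv}, so the lemma stands as an independent ingredient that is then fed into the primal analysis of \cref{thm:conv_grad}; yours is shorter given the earlier propositions, at the cost of invoking the local boundedness of $\partial H_1^*$ on bounded sets --- a standard fact for finite-valued convex functions on $\Rn$, but one not listed among the background propositions of \cref{sec:bkgd} --- which is precisely why your fallback argument (for $w\in\partial H_1(p_k)$ one has $\|w\|\geq (H_1(p_k)-H_1(0))/\|p_k\|\to+\infty$ by $1$-coercivity, contradicting \cref{thm:u_conv}(iii)) is the cleaner way to close that step using only the paper's stated toolkit.
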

\begin{proof}
\updatenew{Recall that for each $j\in \{1,\cdots, N\}$, $\{v_{j,k}\}_k\subset \R^n$ and $\{t_{j,k}\}_k\subset (0,+\infty)$ are two sequences satisfying the assumptions in \cref{eqt:seq_notation}.}
Denote $p_k := \nabla_x S_H(x_k, t_{1,k},\cdots, t_{N,k})$. Then, $p_k$ is a maximizer of the maximization problem in \cref{eqt:hopf}. Hence, for any $q$ in $\partial J(x)$,
\begin{equation*}
\langle x_k, p_k\rangle - J^*(p_k) - \sum_{j=1}^N t_{j,k} H_j(p_k)
\geq \langle x_k, q\rangle - J^*(q) - \sum_{j=1}^N t_{j,k} H_j(q).
\end{equation*}
Since $q\in\partial J(x)$, we have $x\in \partial J^*(q)$, hence $J^*(p_k)\geq J^*(q) + \langle x, p_k-q\rangle $. Combining this inequality and the above one we can obtain
\begin{equation*}
\sum_{j=1}^N t_{j,k} H_j(p_k) - \sum_{j=1}^N t_{j,k} H_j(q) \leq \langle x_k-x, p_k-q\rangle \leq \sum_{j=1}^N t_{j,k} \|v_{j,k}\|(\|p_k\| + \|q\|).
\end{equation*}
Here, for the second inequality above, we used the definition of $x_k$ in \cref{eqt:seq_notation} and Cauchy-Schwarz inequality. Then, rearranging the terms and dividing by $t_{1,k}$, we get
\begin{equation}\label{eqt:proof_bdd_1}
\sum_{j=1}^N \frac{t_{j,k}}{t_{1,k}} (H_j(p_k) - \|v_{j,k}\| \|p_k\|) \leq \sum_{j=1}^N \frac{t_{j,k}}{t_{1,k}} (H_j(q) + \|v_{j,k}\|\|q\|).
\end{equation}
If $\{p_k\}_k$ is not bounded, without loss of generality, we can assume $\|p_k\|$ increases to infinity.
Since $H_j$ is 1-coercive for all $j$, then for any $M>0$, there exists $K$ such that $H_j(p_k)\geq M\|p_k\|$ for any $k>K$ and any $j=1,\cdots, N$. Then, from \cref{eqt:proof_bdd_1}, for any $k>K$, we obtain
\begin{equation*}
\sum_{j=1}^N \frac{t_{j,k}}{t_{1,k}} (M - \|v_{j,k}\|) \|p_k\| \leq \sum_{j=1}^N \frac{t_{j,k}}{t_{1,k}} (H_j(q) + \|v_{j,k}\|\|q\|).
\end{equation*}
The right hand side is bounded. However, since $\|p_k\|$ goes to infinity, the term for $j=1$ on the left hand side is unbounded, while the terms for $j>1$ is non-negative. As a result, the left hand side can be arbitrarily large, which leads to a contradiction.
Therefore, we can conclude that $\{p_k\}_k$ is bounded.

For the remaining part, let $p$ be a cluster point, then there exists a subsequence converging to $p$, still denoted as $p_k$. Since $S_H$ solves the multi-time HJ equation \cref{eqt:H-J} and $H_j$ is continuous for any $j$, then we have
\begin{equation*}
\begin{split}
    \lim_{k\to+\infty} \nabla S_H(x_k, t_{1,k},\cdots, t_{N,k})
    &=\lim_{k\to+\infty} (p_k, -H_1(p_k), \cdots, -H_N(p_k))\\
    &=(p,-H_1(p), \cdots, -H_N(p)).
\end{split}
\end{equation*}
By the continuity property \cite[Prop.XI.4.1.1]{convex_analy_book2} of the subdifferential operator $\partial S_H$ of the convex lower semi-continuous function $S_H$, we can conclude that 
\begin{equation*}
    (p,-H_1(p), \cdots, -H_N(p)) \in \partial S_H(x,0,\cdots, 0),
\end{equation*}
which implies $p\in \partial J(x)$.
\end{proof}

The variational behaviors of the momentum $\nabla_x S$ and the velocities $u_j/t_j$ are presented in the following proposition. To be specific, the cluster points of the momenta and the velocities solve two optimization problems, respectively, and the two problems are dual to each other. An illustration of this result is given in \cref{fig:illustrationprop}.

\begin{prop} \label{thm:conv_grad}
Assume (H1)-(H2) and \cref{eqt:seq_notation} hold. Let $x\in \dom J$ and $\partial J(x)\neq \emptyset$. Then,

\begin{itemize}
\item[(i)] the directional derivative of $S_H$ corresponds to a maximization problem:
\begin{equation}\label{eqt:dirS}
\lim_{k\to+\infty} \frac{S_H(x_k,t_{1,k},\cdots, t_{N,k}) - S_H(x,0,\cdots,0)}{t_{1,k}} = \max_{q\in \partial J(x)} \sum_{j=1}^N \alplimj\left(\langle q, \vlimj\rangle - H_j(q)\right).
\end{equation}
Moreover, let $p$ be any cluster point of $\{\nabla_x S_H(x_k,t_{1,k}, \cdots, t_{N,k})\}_k$, then,
\begin{equation}\label{eqt:dirS1}
p\in \argmax_{q\in \partial J(x)}\sum_{j=1}^N\alplimj\left(\langle q,\vlimj\rangle - H_j(q)\right).
\end{equation}
\item[(ii)] the directional derivative of $S_L$ corresponds to the dual minimization problem:
\begin{equation}\label{eqt:Sdirpf0}
\begin{split}
&\lim_{k\to+\infty} \frac{S_L(x_k,t_{1,k},\cdots, t_{N,k}) - S_L(x,0,\cdots,0)}{t_{1,k}} \\
= &\min_{w_j\in\Rn} \sum_{j=1}^N \alplimj (I_{\partial J(x)}^*(\vlimj - w_j) + H_j^*(w_j)).
\end{split}
\end{equation}
Moreover, if $\bar{w}_j$ is a cluster point of $\{u_j(x_k,t_{1,k},\cdots, t_{N,k})/t_{j,k}\}_k$ for any $j$ satisfying $\alplimj \neq 0$, then 
\begin{equation}\label{eqt:dirS2}
\bar{w}_j \in \argmin_{w_j\in\Rn} \left(I_{\partial J(x)}^*(\vlimj - w_j) + H_j^*(w_j)\right).
\end{equation}
\end{itemize}
\end{prop}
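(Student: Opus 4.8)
The plan is to establish (i) by sandwiching the difference quotient of $S_H$ between two bounds read off from the Hopf formula \cref{eqt:hopf}, and then to obtain (ii) from (i) by convex duality together with the velocity representation in \cref{thm:u_formula}. Throughout I use $S_H(x,0,\dots,0)=J(x)$, the identity $x_k-x=\sum_{j=1}^N t_{j,k}v_{j,k}$, and, for $q\in\partial J(x)$, the Fenchel equality $J^*(q)=\langle q,x\rangle-J(x)$ from \cref{prop3}.

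For the lower bound in (i) I would evaluate the supremum \cref{eqt:hopf} at a fixed $q\in\partial J(x)$ and use the Fenchel equality, obtaining $\frac{S_H(x_k,\dots)-J(x)}{t_{1,k}}\ge\sum_{j=1}^N\frac{t_{j,k}}{t_{1,k}}(\langle q,v_{j,k}\rangle-H_j(q))$; letting $k\to+\infty$ under \cref{eqt:seq_notation} and then maximizing over $q\in\partial J(x)$ gives $\liminf\ge$ the right-hand side of \cref{eqt:dirS}. The maximum is attained because, with $\alpha_{1,\infty}=1$ and $H_1$ $1$-coercive, the continuous objective tends to $-\infty$ as $\|q\|\to+\infty$ on the closed set $\partial J(x)$. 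For the reverse bound I take $p_k:=\nabla_x S_H(x_k,t_{1,k},\dots,t_{N,k})$, the maximizer in \cref{eqt:hopf}, and use $J^*(p_k)\ge\langle p_k,x\rangle-J(x)$ to get $\frac{S_H(x_k,\dots)-J(x)}{t_{1,k}}\le\sum_{j=1}^N\frac{t_{j,k}}{t_{1,k}}(\langle p_k,v_{j,k}\rangle-H_j(p_k))$. By \cref{lem:bdd} the sequence $\{p_k\}$ is bounded with every cluster point in $\partial J(x)$; passing to a subsequence $p_k\to p\in\partial J(x)$ shows $\limsup\le\sum_j\alplimj(\langle p,\vlimj\rangle-H_j(p))$, which is at most the maximum. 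This gives \cref{eqt:dirS}, and since the full limit exists, running the same sandwich along any subsequence with $p_k\to p$ forces $p$ to attain the maximum, i.e. \cref{eqt:dirS1}.

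For (ii), since $S_L=S_H$ under (H1)--(H2), the limit in \cref{eqt:Sdirpf0} equals the maximum already computed in \cref{eqt:dirS}, so the task reduces to a convex-duality identity. Using $I^*_{\partial J(x)}=J'(x,\cdot)$ (\cref{prop5}), each inner problem $\min_{w_j}(I^*_{\partial J(x)}(\vlimj-w_j)+H_j^*(w_j))$ is an inf-convolution whose conjugate, by \cref{prop4} (applicable since $\dom H_j=\Rn$ and $\ri \partial J(x)\neq\emptyset$), equals $I_{\partial J(x)}+H_j$, so the inner value equals $\sup_{q\in\partial J(x)}(\langle q,\vlimj\rangle-H_j(q))$. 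Matching $\sum_j\alplimj$ of these per-index suprema with the single-$q$ maximum of \cref{eqt:dirS} is the key point, discussed below. An independent derivation of the $\le$ half inserts the test decomposition $u_j=t_{j,k}w_j$ into the Lax formula \cref{eqt:lax} and passes to the limit through the directional derivative $J'(x,\cdot)=I^*_{\partial J(x)}$.

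Finally, for the optimizer statement \cref{eqt:dirS2} I would invoke \cref{thm:u_formula}, which gives $u_j(x_k,\dots)/t_{j,k}\in\partial H_j(p_k)$; passing to the limit along a subsequence realizing a cluster point $\bar{w}_j$ and using closedness of the graph of $\partial H_j$ with $p_k\to p$ (a maximizer by \cref{eqt:dirS1}) yields $\bar{w}_j\in\partial H_j(p)$, i.e. $p\in\partial H_j^*(\bar{w}_j)$ by \cref{prop3}. Combined with the normal-cone optimality of $p$ in \cref{eqt:dirS} and the fact that $\partial I^*_{\partial J(x)}(d)=\argmax_{q\in\partial J(x)}\langle q,d\rangle$, this should verify the optimality condition $p\in\partial I^*_{\partial J(x)}(\vlimj-\bar{w}_j)\cap\partial H_j^*(\bar{w}_j)$, which by \cref{prop4} states exactly that $\bar{w}_j$ minimizes $I^*_{\partial J(x)}(\vlimj-\cdot)+H_j^*(\cdot)$. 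I expect the step requiring the most care to be precisely this matching: showing that the single maximizer $p$ supports each individual direction $\vlimj-\bar{w}_j$, and not merely the aggregate $\sum_j\alplimj(\vlimj-\bar{w}_j)$, is what equates the joint maximum of (i) with the separable minimum of (ii). Secondary technical obstacles are the exchange of $k\to+\infty$ with the possibly discontinuous $J'(x,\cdot)=I^*_{\partial J(x)}$ when $\partial J(x)$ is unbounded, and the treatment of the indices with $\alplimj=0$, whose velocity cluster points are controlled by \cref{thm:u_conv}.
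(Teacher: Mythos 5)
Your part (i) is correct and takes a genuinely different route from the paper on the upper bound. The paper restricts $S_H$ to the ray $t\mapsto\bigl(x+t\sum_{j}\alpha_{j,k}v_{j,k},\,\alpha_{1,k}t,\dots,\alpha_{N,k}t\bigr)$, bounds the difference quotient by the derivative of this convex one-variable restriction at $t_{1,k}$, and evaluates that derivative through the PDE \cref{eqt:H-J}; you instead use that $p_k=\nabla_x S_H(x_k,t_{1,k},\dots,t_{N,k})$ is the exact maximizer in \cref{eqt:hopf} together with Fenchel--Young for $J^*$, obtaining the same bound $\frac{1}{t_{1,k}}\bigl(S_H(x_k,t_{1,k},\dots,t_{N,k})-J(x)\bigr)\le\sum_{j=1}^N\alpha_{j,k}\bigl(\langle p_k,v_{j,k}\rangle-H_j(p_k)\bigr)$ without invoking time-differentiability or the PDE. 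Both arguments then conclude through \cref{lem:bdd} identically; yours is more elementary, and your coercivity argument for attainment of the maximum is sound.

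Part (ii) is where the genuine gap lies, and you have put your finger on exactly the right spot --- but the step you defer (that the single maximizer $p$ supports each individual direction $\vlimj-\bar w_j$, not merely the aggregate) cannot be proved, because it is false, and so is \cref{eqt:Sdirpf0} itself as stated. Take $n=1$, $J=|\cdot|$, $x=0$ (so $\partial J(x)=[-1,1]$ and $I_{\partial J(x)}^*=|\cdot|$), $N=2$, $H_1=H_2=H_1^*=H_2^*=\frac12|\cdot|^2$, $t_{1,k}=t_{2,k}=1/k$, $v_{1,k}\equiv 10$, $v_{2,k}\equiv -10$, so $\alpha_{1,\infty}=\alpha_{2,\infty}=1$ and $x_k=x=0$ for all $k$. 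Then $S_H(x_k,t_{1,k},t_{2,k})=\sup_{|q|\le 1}(-q^2/k)=0$, so the left-hand side of \cref{eqt:Sdirpf0} is $0$, consistent with (i) (maximum attained at $q=0$); but the right-hand side is
\begin{equation*}
\min_{w_1\in\R}\Bigl(|10-w_1|+\tfrac12 w_1^2\Bigr)+\min_{w_2\in\R}\Bigl(|-10-w_2|+\tfrac12 w_2^2\Bigr)=\tfrac{19}{2}+\tfrac{19}{2}=19.
\end{equation*}
Moreover the Lax minimizers are $\bar u_{1,k}=\bar u_{2,k}=0$, so $\bar w_1=\bar w_2=0$, and $0$ does not minimize $w\mapsto|10-w|+\frac12 w^2$ (the minimizer is $w=1$); hence \cref{eqt:dirS2} fails as well. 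The root cause is precisely the discrepancy you flagged: by \cref{prop4} the right-hand side of \cref{eqt:Sdirpf0} equals $\sum_j\alplimj\sup_{q\in\partial J(x)}(\langle q,\vlimj\rangle-H_j(q))$, a sum of per-index suprema, which strictly exceeds the joint maximum in \cref{eqt:dirS} whenever no common maximizer exists (above, index $1$ wants $q=1$ and index $2$ wants $q=-1$). You should also know that the paper's own proof commits an invalid inference at exactly this point: from the aggregate inequality $\sum_j\alplimj\langle p-q,\vlimj-\bar w_j\rangle\ge 0$ for all $q\in\partial J(x)$ it concludes the per-index inequality $\langle p-q,\vlimj-\bar w_j\rangle\ge 0$, which does not follow and fails in the example.

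What is actually true --- and what follows either from your part (i) combined with \cref{prop4}, or from the paper's valid intermediate conclusions ($p\in\partial H_j^*(\bar w_j)$ for $\alplimj\neq 0$ together with $\sum_j\alplimj(\vlimj-\bar w_j)\in N_{\partial J(x)}(p)$) --- is the coupled dual identity
\begin{equation*}
\lim_{k\to+\infty}\frac{S_L(x_k,t_{1,k},\cdots,t_{N,k})-S_L(x,0,\cdots,0)}{t_{1,k}}
=\min_{w_1,\dots,w_N\in\Rn}\Bigl[I_{\partial J(x)}^*\Bigl(\sum_{j=1}^N\alplimj(\vlimj-w_j)\Bigr)+\sum_{j=1}^N\alplimj H_j^*(w_j)\Bigr],
\end{equation*}
with the support function applied to the aggregate direction, and the velocity cluster points $(\bar w_1,\dots,\bar w_N)$ minimize this coupled functional. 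Note that your own ``independent derivation'' of the upper half --- inserting $u_j=t_{j,k}w_j$ into \cref{eqt:lax} --- naturally produces this coupled form, since the perturbations enter $J$ only through their sum; that is another sign that the separable right-hand side is the wrong dual object. So record part (i) as a correct (and cleaner) proof, but restate (ii) in the coupled form before attempting to complete it.
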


Specially, if $H_j$ is strictly convex and $\alplimj \neq 0$ for some $j$, then the maximizer in \cref{eqt:dirS1} is unique, which implies the convergence of $\nabla_x S_H(x_k, t_{1,k},\cdots, t_{N,k})$ to the unique maximizer. Similarly, for any $j$ such that $H_j$ is differentiable and $\alplimj \neq 0$, we can conclude that $u_j(x_k, t_{1,k},\cdots, t_{N,k})/t_{j,k}$ converges to the unique minimizer in \cref{eqt:dirS2}.

\begin{remark}
It is straightforward to obtain $\lim_{k\to+\infty} \frac{S_H(x_k,t_{1,k},\cdots, t_{N,k}) - S_H(x,0,\cdots,0)}{\|(t_{1,k}, \cdots, t_{N,k})\|_2}$ using the following computation
\begin{equation*}
\begin{split}
    &\lim_{k\to+\infty} \frac{S_H(x_k,t_{1,k},\cdots,t_{N,k}) - S_H(x,0,\cdots,0)}{\|(t_{1,k}, \dots, t_{N,k})\|_2}\\
    =\ & \lim_{k\to+\infty} \frac{S_H(x_k,t_{1,k},\cdots,t_{N,k}) - S_H(x,0,\cdots,0)}{t_{1,k}} \cdot \frac{t_{1,k}}{\|(t_{1,k}, \dots, t_{N,k})\|_2}\\
    =\ & \lim_{k\to+\infty} \frac{S_H(x_k,t_{1,k},\cdots,t_{N,k}) - S_H(x,0,\cdots,0)}{t_{1,k}} \cdot \frac{1}{\|(\alpha_{1,\infty}, \dots, \alpha_{N,\infty})\|_2},
\end{split}
\end{equation*}
where the last equality follows from the assumption that $\alpha_{j,\infty} = \lim_{k\to+\infty} t_{j,k}/ t_{1,k}$ for any $j=1,\cdots, N$.
\end{remark}

\begin{proof}
\updatenew{Recall that the $k-th$ spatial variable $x_k$ is defined to be $x + \sum_{j=1}^N t_{j,k}v_{j,k}$, where $\{v_{j,k}\}_k\subset \R^n$ and $\{t_{j,k}\}_k\subset (0,+\infty)$ are two sequences satisfying the assumptions in \cref{eqt:seq_notation}.}
Denote $\Delta S_k := S_H(x_k,t_{1,k},\cdots, t_{N,k}) - S_H(x,0,\cdots,0)$. 

Proof of (i): For any $q\in \partial J(x)$, by Hopf formula \cref{eqt:hopf}, we obtain
\begin{equation*}
\begin{split}
\Delta S_k &
= \left(J^* + \sum_{j=1}^N t_{j,k} H_j\right)^*(x_k) - J(x)
 \geq  \langle q,x_k\rangle - J^*(q) - \sum_{j=1}^N t_{j,k} H_j(q) - J(x).
\end{split}
\end{equation*}
Since $q\in\partial J(x)$, we have $J^*(q)+J(x)=\langle q,x\rangle$. Hence, together with the definition of $x_k$ in \cref{eqt:seq_notation}, we get
\begin{equation*}
\Delta S_k \geq \langle q,x_k-x\rangle - \sum_{j=1}^N t_{j,k} H_j(q) = \sum_{j=1}^N t_{j,k}(\langle q, v_{j,k}\rangle -H_j(q)).
\end{equation*}
Therefore, we have
\begin{equation*}
\liminf_{k\to+\infty} \frac{\Delta S_k}{t_{1,k}} \geq \liminf_{k\to+\infty} \sum_{j=1}^N \frac{t_{j,k}}{t_{1,k}}(\langle q, v_{j,k}\rangle -H_j(q)) = \sum_{j=1}^N \alplimj (\langle q, \vlimj \rangle -H_j(q)),
\end{equation*}
where we recall that $\lim_{k\to+\infty}v_{j,k} = \vlimj$ and $\lim_{k\to+\infty} t_{j,k}/t_{1,k} = \alplimj$ by \cref{eqt:seq_notation}.
Here, $q$ is an arbitrary element in $\partial J(x)$, hence we obtain
\begin{equation}\label{eqt:Sdir_pf1}
\liminf_{k\to+\infty} \frac{\Delta S_k}{t_{1,k}} \geq \sup_{q\in\partial J(x)} \sum_{j=1}^N \alplimj (\langle q, \vlimj \rangle -H_j(q)).
\end{equation}

On the other hand, for any $k$, consider the function $\phi_k: [0,+\infty)\to\R$ defined by $\phi_k(t) := S_H\left(x + \sum_{j=1}^N t\alpha_{j,k} v_{j,k},\ \alpha_{1,k}t, \cdots,\ \alpha_{N,k}t\right)$, where $\alpha_{j,k}:= t_{j,k}/t_{1,k}$. Since $S_H$ is a convex function and $\phi_k$ is its restriction on a line, then $\phi_k\in\Gamma_0(\R)$ with $\dom \phi_k = [0,+\infty)$. Also, $\phi_k$ is differentiable in $(0,+\infty)$ since $S_H$ is differentiable. The derivative of $\phi_k$ at $t_{1,k}$ is given by the chain rule:
\begin{equation*}
\phi_k'(t_{1,k}) = \sum_{j=1}^N \alpha_{j,k} \left(\langle \nabla_x S_H(x_k,t_{1,k},\cdots, t_{N,k}),\  v_{j,k}\rangle + \frac{\partial S_H}{\partial t_j}(x_k,t_{1,k},\cdots, t_{N,k})\right).
\end{equation*}
Since $S_H$ satisfies the multi-time HJ equation \cref{eqt:H-J}, we obtain
\begin{equation*}
\phi_k'(t_{1,k}) = \sum_{j=1}^N \alpha_{j,k} \left(\langle \nabla_x S_H(x_k,t_{1,k},\cdots, t_{N,k}), \ v_{j,k}\rangle - H_j(\nabla_x S_H(x_k,t_{1,k},\cdots, t_{N,k}))\right).
\end{equation*}
From straightforward computation and the convexity of $\phi_k$, we get
\begin{equation}
\frac{\Delta S_k}{t_{1,k}}
= \frac{\phi_k(t_{1,k}) - \phi_k(0)}{t_{1,k}}
\leq \phi_k'(t_{1,k}) = \sum_{j=1}^N \alpha_{j,k} \left(\langle p_k, \ v_{j,k}\rangle - H_j(p_k)\right),
\end{equation}
where $p_k := \nabla_x S_H(x_k,t_{1,k},\cdots, t_{N,k})$.

Let $p$ be a cluster point of $\{p_k\}$. Take a subsequence converging to $p$ and still denote it as $\{p_k\}$. Since $p\in \partial J(x)$ by \cref{lem:bdd} and $H_j$ is continuous for any $j$, we have
\begin{equation*}
\limsup_{k\to+\infty} \frac{\Delta S_k}{t_{1,k}} \leq \sum_{j=1}^N \alplimj \left(\langle p, \vlimj\rangle - H_j(p)\right)\leq \sup_{q\in\partial J(x)} \sum_{j=1}^N \alplimj (\langle q, \vlimj \rangle -H_j(q)).
\end{equation*}
Together with \cref{eqt:Sdir_pf1}, the equation \cref{eqt:dirS} is proved. Moreover, any cluster point $p$ is a maximizer.

\bigbreak
Proof of (ii):
Here, we adopt the notations $\bar{u}_{j,k}$ and $\bar{u}_{0,k}$ defined in the proof of \cref{thm:u_conv} to represent the minimizers in the Lax formula.
According to the Lax formula \cref{eqt:lax} evaluated at the point $(x_k,t_{1,k}, \cdots, t_{N,k})$ and by the convexity of $J$ we deduce that
\begin{equation*}
S_L = J(\bar{u}_{0,k}) + \sum_{j=1}^N t_{j,k} H_j^*\left(\frac{\bar{u}_{j,k}}{t_{j,k}}\right) \geq J(x) + \langle q, \bar{u}_{0,k} - x\rangle + \sum_{j=1}^N t_{j,k} H_j^*\left(\frac{\bar{u}_{j,k}}{t_{j,k}}\right),
\end{equation*}
for any $q \in \partial J(x)$.
Since $S_L = S_H$, we have $S_L(x_k,t_{1,k},\cdots,t_{N,k}) - S_L(x,0,\cdots,0) =\Delta S_k$. By the definition of $x_k$ and $\bar{u}_{0,k}$, we can compute $\bar{u}_{0,k} - x = x_k - x -\sum_j \bar{u}_{j,k} = \sum_j (t_{j,k}v_{j,k} - \bar{u}_{j,k})$, hence we have
\begin{equation*} 
\frac{\Delta S_k}{t_{1,k}} \geq \sum_{j=1}^N \left(\alpha_{j,k}\langle q, v_{j,k}\rangle - \left\langle q, \frac{\bar{u}_{j,k}}{t_{1,k}}\right\rangle + \alpha_{j,k}H_j^*\left(\frac{\bar{u}_{j,k}}{t_{j,k}}\right) \right),
\end{equation*}
where $\alpha_{j,k}:=t_{j,k}/t_{1,k}$.
According to \cref{thm:u_formula} we have $\bar{u}_{j,k}/t_{j,k}\in \partial H_j(p_k)$. Therefore we get
\begin{equation*}
    \alpha_{j,k}H_j^*\left(\frac{\bar{u}_{j,k}}{t_{j,k}}\right)
    = \alpha_{j,k} \left(\left\langle \frac{\bar{u}_{j,k}}{t_{j,k}}, p_k \right\rangle - H_j(p_k)\right)
    = \left\langle \frac{\bar{u}_{j,k}}{t_{1,k}}, p_k \right\rangle - \alpha_{j,k}H_j(p_k).
\end{equation*}
Combining the above two equations we obtain
\begin{equation} \label{eqt:Sdirpf2}
    \begin{split}
    \frac{\Delta S_k}{t_{1,k}} \geq &\sum_{\substack{j=1\\ \alplimj = 0}}^N \left(\alpha_{j,k}\langle q, v_{j,k}\rangle + \left\langle p_k - q, \frac{\bar{u}_{j,k}}{t_{1,k}}\right\rangle - \alpha_{j,k}H_j(p_k) \right)\\
    &+ \sum_{\substack{j=1\\ \alplimj \neq 0}}^N \left(\alpha_{j,k}\left\langle q, v_{j,k} - \frac{\bar{u}_{j,k}}{t_{j,k}}\right\rangle + \alpha_{j,k}H_j^*\left(\frac{\bar{u}_{j,k}}{t_{j,k}}\right) \right).
    \end{split}
\end{equation}
From \cref{thm:u_conv} (ii), $\|\bar{u}_{j,k}\| / t_{1,k}$ converges to zero if $\alplimj = 0$. Also, $p_k$ are bounded by \cref{lem:bdd}, hence the first sum in the right hand side of \cref{eqt:Sdirpf2} converges to zero as $k$ approaches infinity.
On the other hand, for $j$ such that $\alplimj \neq 0$, $\bar{u}_{j,k}/t_{j,k}$ is bounded by \cref{thm:u_conv} (iii).
Taking a subsequence, we can assume that $\bar{u}_{j,k}/t_{j,k}$ converges to some vector, denoted as $\bar{w}_j$. 
In conclusion, as $k$ approaches infinity in \cref{eqt:Sdirpf2}, we have
\begin{equation}\label{eqt:Sdirpf3}
\lim_{k\to+\infty}\frac{\Delta S_k}{t_{1,k}} \geq \sum_{\substack{j=1\\ \alplimj \neq 0}}^N \alplimj\left(\left\langle q, \vlimj - \bar{w}_j\right\rangle + H_j^*\left(\bar{w}_j\right) \right) \geq \sum_{j=1}^N \alplimj (\langle q, \vlimj \rangle -H_j(q)),
\end{equation}
where the second inequality holds by the definition of Legendre transform \cref{eqt:Legendre}.
From \cref{eqt:dirS}, for any maximizer $p$ in \cref{eqt:dirS1},
\begin{equation}\label{eqt:Sdirpf4}
\lim_{k\to+\infty}\frac{\Delta S_k}{t_{1,k}} = \sum_{j=1}^N \alplimj (\langle p, \vlimj \rangle -H_j(p)).
\end{equation}

Taking $q=p$ in \cref{eqt:Sdirpf3} and comparing it with \cref{eqt:Sdirpf4}, we can conclude that the inequalities in \cref{eqt:Sdirpf3} become equalities when $q=p$. As a result, when $\alplimj \neq 0$ we have $\langle p, \bar{w}_j\rangle = H_j^*(\bar{w}_j) + H_j(p)$, which implies that $p\in \partial H_j^*(\bar{w}_j)$. Then, we deduce that
\begin{equation}\label{eqt:Sdirpf5}
 \lim_{k\to+\infty}\frac{\Delta S_k}{t_{1,k}} =
    \sum_{\substack{j=1\\ \alplimj \neq 0}}^N \alplimj\left(\left\langle p, \vlimj - \bar{w}_j\right\rangle + H_j^*\left(\bar{w}_j\right) \right).
\end{equation}

On the other hand, for an arbitrary $q\in \partial J(x)$, by \cref{eqt:Sdirpf3} and \cref{eqt:Sdirpf5}, we have
\begin{equation*}
\begin{split}
\sum_{\substack{j=1\\ \alplimj \neq 0}}^N \alplimj\left(\left\langle p, \vlimj - \bar{w}_j\right\rangle + H_j^*\left(\bar{w}_j\right) \right)
&= \lim_{k\to+\infty}\frac{\Delta S_k}{t_{1,k}} \\
&\geq \sum_{\substack{j=1\\ \alplimj \neq 0}}^N \alplimj\left(\left\langle q, \vlimj - \bar{w}_j\right\rangle + H_j^*\left(\bar{w}_j\right) \right),
\end{split}
\end{equation*}
which implies that $\langle p-q, \vlimj - \bar{w}_j\rangle \geq 0$ for any $q\in \partial J(x)$, when $\alplimj \neq 0$. By \cref{eqt:def_normal_cone} and \cref{eqt:subgrad_indicator}, we can deduce that $\vlimj - \bar{w}_j \in N_{\partial J(x)}(p) = \partial I_{\partial J(x)}(p)$. \cref{prop3} gives the equality $\langle p, \vlimj - \bar{w}_j\rangle = I_{\partial J(x)}^*(\vlimj - \bar{w}_j)$. Then, \cref{eqt:Sdirpf0} follows from this equality and \cref{eqt:Sdirpf5}.

It remains to prove \cref{eqt:dirS2}. Consider any $j$ such that $\alplimj \neq 0$. Define $f: \Rn \to \R$ by $f(w):= I_{\partial J(x)}^*(\vlimj - w) + H_j^*(w)$.
Then it suffices to prove $0\in \partial f(\bar{w}_j)$.
So far, we have proved $p\in \partial H_j^*(\bar{w}_j)$ and $\vlimj - \bar{w}_j\in \partial I_{\partial J(x)}(p)$, which implies $p\in \partial I_{\partial J(x)}^*(\vlimj - \bar{w}_j)$. By straightforward computation and \cref{prop7},
\begin{equation*}
    \partial f(\bar{w}_j) = -\partial I_{\partial J(x)}^*(\vlimj - \bar{w}_j) + H_j^*(\bar{w}_j) \ni -p+p = 0.
\end{equation*}
Therefore, $\bar{w}_j$ is a minimizer of $f$, which concludes the proof.
\end{proof}

\update{The above proposition provides the explicit formulas for the variations of $S$, $\nabla_x S$ and $\frac{u_j}{t_j}$ where $u_j$ denotes the $j$-th component of the minimizer of the decomposition model in the form of \cref{eqt:lax}. Specifically, the limits of these quantities are related to the two optimization problems given by \cref{eqt:dirS1,eqt:dirS2}. 
From the perspective of image processing, given an observed image $x_k$ which is a summation of a constant component $x$ and other components $t_{j,k}v_{j,k}$, the decomposition model \cref{eqt:lax} gives $N+1$ components. In these $N+1$ components, one component converges to the constant component $x$ and the other components $u_j$ vanish as the parameters $t_{j,k}$ approach zero, by \cref{thm:u_conv}. Then, \cref{thm:conv_grad}(ii) states that the component $u_j$ converges to $0$ from a direction  $\bar{w}_j$ \cite[p. 197]{RockWets98}.  On the other hand, \cref{thm:conv_grad}(i) provides a representation formula for the cluster point of the maximizers of the dual problem in the form of \cref{eqt:hopf}.}
\section{Uniqueness of the Convex Solutions to the Multi-time Hamilton-Jacobi Equations} \label{sec:uniq}
In the previous section, we have discussed the relation of the optimization problems in the Hopf formula and Lax formula with the classical solution of the multi-time HJ equation. In fact, some results can be generalized to weaker assumptions in which case the solution provided by Hopf and Lax formulas is not classical. In this section, we prove that the only convex solution is given by the two formulas. 

In the field of PDEs, a type of solution called viscosity solution is considered for solving the HJ equation when no classical solution exists. The uniqueness of the viscosity solution has been widely studied under different assumptions \update{\cite{Bardi1997, barles1994solutions}}. However, the functions in convex analysis and optimization may take the value $+\infty$, which is an unusual condition in the field of PDEs. Therefore, to maintain the connection of the HJ equations and convex optimization problems, we consider the convex solution which may be infinity in some area and prove the uniqueness using the techniques in convex analysis.

We start with the proof for the classical convex solution, in order to demonstrate the idea of utilizing the convexity assumptions. After that, we state the uniqueness of nonsmooth convex solution under more general assumptions in \cref{cor:uniq}. 
When proving the uniqueness of the classical convex solution, we assume the properties (H1) and (H2) hold. Moreover, the solution $S$ satisfies:
\setlist[enumerate]{leftmargin=.5in}
\setlist[itemize]{leftmargin=.5in}
\begin{itemize}
\item[(S1) ] $S\in\Gamma_0\left(\Rn\times [0,+\infty)^N\right)\cap C^1(\Rn\times (0,+\infty)^N)$;
\item[(S2) ] $S$ solves the multi-time Hamilton-Jacobi equation \cref{eqt:H-J}.
\end{itemize}
\setlist[enumerate]{leftmargin=.25in}
\setlist[itemize]{leftmargin=.25in}

As it is discussed in \cref{sec:prop}, $S_H$ defined in the Hopf formula \cref{eqt:hopf} is a solution satisfying the assumptions (S1) and (S2). Hence, we just need to prove $S=S_H$ for any $S$ satisfying (S1)-(S2). 
First, we consider the single-time case when the time dimension $N=1$, and formulate its Legendre transform $S^*(p,E^-)$ for $p\in\Rn$ and $E^-\in\R$ in the following lemma.

\begin{lem}\label{lem:1}
Assume (H1)-(H2) hold and $S$ satisfies (S1)-(S2). Let $N=1$. Then there exists a convex function $\tilde{H}:\ \Rn\to \R\cup\{+ \infty\}$, 
such that 
$S^*(p,E^-) = J^*(p) + I_{V}(p,E^-)$, where $V := \{(p,E^-):\ E^-\leq -\tilde{H}(p)\}$.
\end{lem}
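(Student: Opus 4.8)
The plan is to compute $S^*$ by partial conjugation, first in $x$ and then in the time variable, and to reduce the whole statement to showing that for each fixed momentum $p$ the slice $t\mapsto S_t^*(p)$ is \emph{affine} on $[0,+\infty)$, where $S_t:=S(\cdot,t)$. Writing the joint conjugate as an iterated supremum,
\[
S^*(p,E^-)=\sup_{t\ge 0}\bigl(E^- t + g_p(t)\bigr),\qquad g_p(t):=S_t^*(p)=\sup_{x\in\Rn}\bigl(\langle p,x\rangle - S(x,t)\bigr).
\]
Two facts are immediate. First, $g_p(0)=J^*(p)$ since $S(\cdot,0)=J$. Second, $g_p$ is concave in $t$, because $\langle p,x\rangle - S(x,t)$ is jointly concave in $(x,t)$ by (S1) and a partial supremum of a jointly concave function is concave. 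If $p\notin\dom J^*$, then $S^*(p,E^-)\ge g_p(0)=+\infty$ for every $E^-$, so both sides of the claimed identity are $+\infty$ and we may set $\tilde H(p)=+\infty$; hence all the content is in the case $p\in\dom J^*$.

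Next I record what affineness buys. If $g_p(t)=J^*(p)+\tilde H(p)\,t$ on $[0,+\infty)$ with finite slope $\tilde H(p)$, then
\[
S^*(p,E^-)=J^*(p)+\sup_{t\ge0}\bigl(E^-+\tilde H(p)\bigr)t = J^*(p)+I\{E^-\le -\tilde H(p)\},
\]
which is exactly the asserted form with $V=\{(p,E^-):E^-\le -\tilde H(p)\}$. Convexity of $\tilde H$ then comes for free: since $S\in\Gamma_0(\Rn\times[0,+\infty))$ we have $S^*\in\Gamma_0(\mathbb{R}^{n+1})$, so $\dom S^*$ is convex; and with the convention $\tilde H\equiv+\infty$ off $\dom J^*$ one checks $\dom S^*=V$. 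As $V$ is the hypograph of $-\tilde H$, its convexity is equivalent to concavity of $-\tilde H$, i.e.\ to convexity of $\tilde H$. Thus the entire lemma reduces to establishing that $g_p$ is affine.

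For the affineness I would use the Hamilton--Jacobi equation (S2) to counteract concavity. Concavity gives $g_p'(t^-)\ge g_p'(t^+)$, and I claim both one-sided derivatives equal $H(p)$. For $t>0$ let $x^\star(t)$ attain $g_p(t)$; by the $C^1$ regularity in (S1) the optimality condition is $\nabla_x S(x^\star(t),t)=p$. Testing the supremum defining $g_p(t_2)$ with the point $x^\star(t_1)$ and integrating $\partial_t S=-H(\nabla_x S)$ along $\{x^\star(t_1)\}\times[t_1,t_2]$ gives $g_p(t_2)-g_p(t_1)\ge\int_{t_1}^{t_2}H\bigl(\nabla_x S(x^\star(t_1),s)\bigr)\,ds$, so dividing by $t_2-t_1$ and letting $t_2\downarrow t_1$ yields $g_p'(t_1^+)\ge H(p)$ by continuity of $\nabla_x S$. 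Symmetrically, testing $g_p(t_1)$ with $x^\star(t_2)$ gives $g_p'(t_2^-)\le H(p)$. Hence, at any interior $t$,
\[
H(p)\ \ge\ g_p'(t^-)\ \ge\ g_p'(t^+)\ \ge\ H(p),
\]
forcing $g_p'\equiv H(p)$ on $(0,+\infty)$, i.e.\ $g_p$ is affine with slope $\tilde H(p)=H(p)$ wherever the $x$-supremum is attained.

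The step I expect to be the main obstacle is precisely the attainment of the maximizer $x^\star(t)$, equivalently $p\in\operatorname{range}\nabla_x S(\cdot,t)$, together with the behaviour on the relative boundary of $\dom J^*$. For $p$ in the relative interior of $\dom S_t^*$ attainment holds, and the $1$-coercivity of $H$ in (H1) is what keeps $S(\cdot,t)$ nondegenerate so that $g_p(t)$ stays finite for $p\in\dom J^*$, $t>0$; for $p$ on the relative boundary I would recover the affine slope by approximating along $\ri\dom$ and passing to the limit using lower semicontinuity and the boundary-continuity property of \cref{prop1}. I note that the argument in fact identifies the slope as $H(p)$ on the interior, but the lemma only needs to record the structural form of $S^*$ and the convexity of $\tilde H$; the full identification $\tilde H\equiv H$ on all of $\Rn$ is what the subsequent uniqueness argument completes before conjugating back to conclude $S=S_H$.
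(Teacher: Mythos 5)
Your route (partial conjugation in $x$, then reduction to affineness of $g_p(t)=S_t^*(p)$) is genuinely different from the paper's, and the differential part of it is sound; but there is a real gap at $t=0$, and it is not a technicality. Your derivative argument proves $g_p$ is affine with slope $H(p)$ only on the \emph{open} half-line $(0,+\infty)$, whereas your reduction needs $g_p(t)=J^*(p)+\tilde H(p)\,t$ on the \emph{closed} half-line, i.e.\ it needs $\lim_{t\to 0^+}g_p(t)=g_p(0)=J^*(p)$. That is not automatic: concavity only yields $\liminf_{t\to0^+}g_p(t)\ge g_p(0)$, and concave functions can jump \emph{up} at the endpoint of their domain (e.g.\ $g(0)=0$, $g(t)=1+t$ for $t>0$ is concave). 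Moreover $-g_p(t)=\inf_{x}\left(S(x,t)-\langle p,x\rangle\right)$ is an infimal projection of the lsc convex function $S$, and infimal projections need not be lower semicontinuous in the surviving variable, so nothing structural forbids the jump a priori. If $c_p:=\lim_{t\to0^+}g_p(t)>J^*(p)$, your own computation gives $S^*(p,E^-)=\max\{J^*(p),\,c_p+\sup_{t>0}(E^-+H(p))t\}=c_p+I\{E^-\le -H(p)\}$ on the domain, which is \emph{not} of the asserted form $J^*(p)+I_V(p,E^-)$. Closing this is exactly the step the paper devotes the middle of its proof to: it shows the constant value $f(p)$ of $S^*(p,\cdot)$ satisfies $f^*=J$ (by conjugating back through $S(\cdot,0)=J$), proves $f$ is convex, concludes $f=f^{**}=J^*$ on $\ri \dom f$, and then recovers the relative-boundary values by restricting the lsc convex function $S^*$ to segments (\cref{prop1}) and matching against the same segment limits of $J^*$. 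Note that biconjugation alone cannot finish the job: one can have $c^*=J$ while $c>J^*$ at relative-boundary points of $\dom J^*$ (take $J^*=I_{[0,1]}$ and $c=J^*$ except $c(1)=5$), so the segment-continuity argument is indispensable, and your proposal contains no substitute for it.

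A secondary, related gap is the attainment of $x^\star(t)$, which you flag but do not resolve. Attainment for $p\in\ri \dom S_t^*$ is fine, but your argument compares two times $t_1,t_2$ for the \emph{same} $p$, and it is not known a priori that $\dom S_t^*$ is independent of $t$ --- that is again something one only learns after the lemma, when $S_t^*=J^*+tH$; your appeal to the $1$-coercivity of $H$ to keep $g_p(t)$ finite is circular in the same way. The paper sidesteps both issues by never forming the partial conjugate: it works inside the single lsc convex function $S^*$, defines $\tilde H$ from $\dom S^*$, and gets the vanishing of the $E^-$-directional derivative by inverting subgradients ($(x,t)\in\partial S^*(p,E^-)\iff(p,E^-)\in\partial S(x,t)$), where nonemptiness of $\partial S^*$ on $\ri\dom S^*$ is free. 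If you import the paper's identification-and-boundary argument for your $c_p$ and make the attainment step precise, your proof does go through, and it has the attractive feature of identifying $\tilde H=H$ directly (which the paper defers to \cref{prop:1}); as written, however, the lemma's asserted formula is not established.
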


\begin{proof}
\begin{figure}[htbp]
\centering
\subfloat[]{\label{fig:illustrationpf1}\includegraphics[width = 0.3\textwidth]{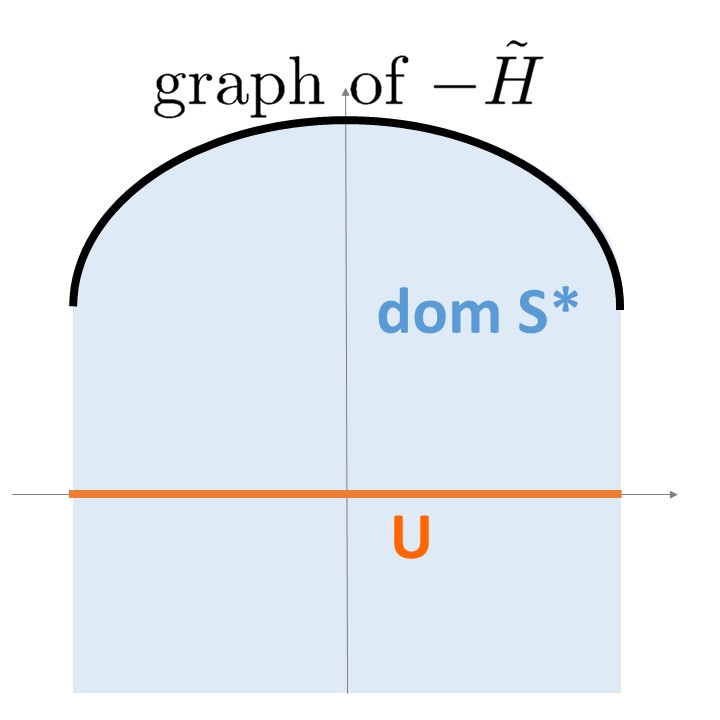}}
\subfloat[]{\label{fig:illustrationpf2}\includegraphics[width = 0.3\textwidth]{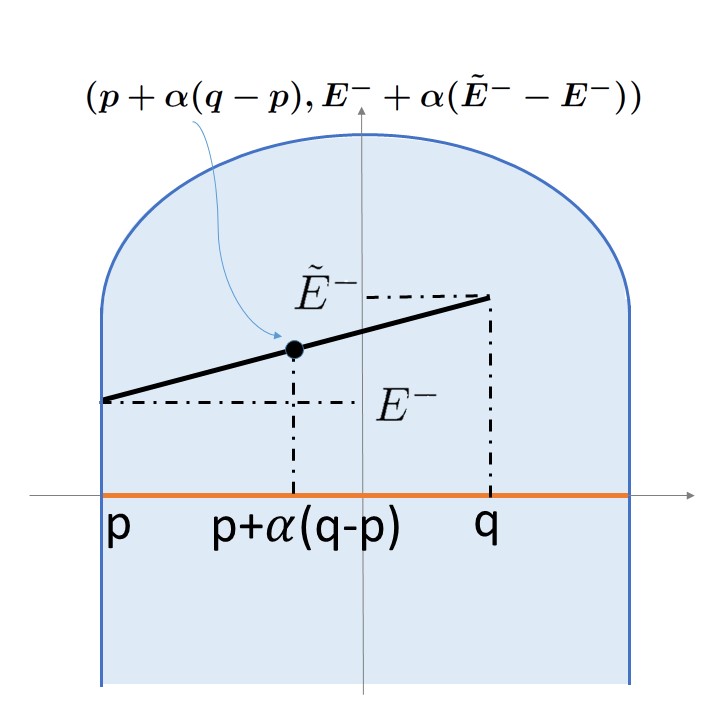}}
\subfloat[]{\label{fig:illustrationpf3}\includegraphics[width = 0.3\textwidth]{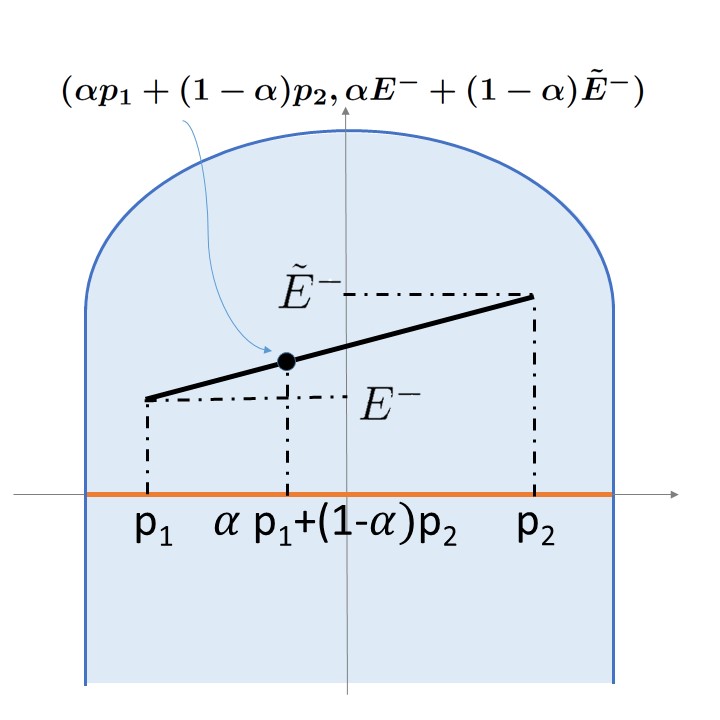}}
\caption{Illustrations for different steps in the proof of \cref{lem:1}.}
\label{fig:illustrationpf}
\end{figure}
In this proof, we only consider the single-time HJ equation. For the single-time case, $H$ is used to denote the Hamiltonian, instead of $H_1$, for simplicity.
First, consider the domain of $S^*$. For each $p\in\Rn$, define 
\begin{equation}\label{eqt:lem41_defHtilde}
\tilde{H}(p) := \inf\{-E^-:\ (p,E^-)\in \dom S^*\}\in \bar{\R} :=\R\cup\{\pm\infty\}.
\end{equation}
For the illustration of this definition, see \cref{fig:illustrationpf1}.
The function $\tilde{H}$ defined here is an extended-valued function taking values in $\bar{\R}$. In the last step of this proof, we will show the convexity and specify the range of this function.
From this definition, it is obvious that $\dom S^*\subseteq V$, where $V = \{(p,E^-):\ E^-\leq -\tilde{H}(p)\}$, as defined in the statement of this lemma. Moreover, denote $V_1 = \{(p,E^-):\ E^- < -\tilde{H}(p)\}$, then we prove $V_1\subseteq \dom S^*$ by using the monotonicity of $S^*(p,\cdot)$. To be specific, let $p\in \Rn$ and $-\infty<\tilde{E}^-\leq E^-<+\infty$, then, we have
\begin{equation}\label{eqt:monotone}
S^*(p,\tilde{E}^-) = \sup_{x\in\Rn, t\geq 0} \langle p,x\rangle + t\tilde{E}^- -S(x,t)
\leq \sup_{x\in\Rn, t\geq 0} \langle p,x\rangle + tE^- -S(x,t) = S^*(p,E^-).
\end{equation}
Hence, $S^*(p,E^-)$ is non-decreasing with respect to $E^-$. As a result, $(p,E^-)\in \dom S^*$ implies $\{p\}\times (-\infty, E^-]\subseteq \dom S^*$. Therefore we obtain $V_1\subseteq \dom S^*\subseteq V$. 

\bigbreak
In the next step, we prove $\dom S^* = V$.

Denote $U := \{p\in \Rn:\ \tilde{H}(p) < +\infty\}$ (see \cref{fig:illustrationpf1}). \update{Here and after in  this section, we use the bold character $\mathbf{0}$ to denote the zero vector in $\Rn$.} Since $U$ is the projection of $\dom S^*$ along the direction $(\mathbf{0},1)$, $U$ is a convex set. Let $p\in \ri U$. Take $E^-<-\tilde{H}(p)$, then $\partial S^*(p,E^-) \neq \emptyset$ because $(p,E^-)\in\ri\dom S^*$. Let $(x,t)\in \partial S^*(p,E^-)$, which implies $(p,E^-)\in \partial S(x,t)$. If $t>0$, then $E^-=\frac{\partial S}{\partial t}(x,t)$ and $p=\nabla_x S(x,t)$. Since $S$ satisfies the HJ equation \cref{eqt:H-J}, $E^-+H(p)=0$. In other words, if $(x,t)\in \partial S^*(p,E^-)$ with $E^-\neq -H(p)$, then we can conclude that $t=0$. Therefore, for any $E^-<-\tilde{H}(p)$ and $E^-\neq -H(p)$, by \cref{prop5}, the directional derivative of $S^*$ in the direction $(\mathbf{0},1)$ is:
\begin{equation*}
(S^*)'((p,E^-),(\mathbf{0},1)) = \sup_{(x,t)\in \partial S^*(p,E^-)}\langle (x,t),(\mathbf{0},1)\rangle = \sup_{(x,t)\in \partial S^*(p,E^-)}\langle (x,0),(\mathbf{0},1)\rangle = 0.
\end{equation*}
As a result, $S^*(p,\cdot)$ is a constant function in its domain. Denote this value as $f(p)$. By the continuity of $S^*$ when restricting to the straight line $\{p\}\times \R$, the value $S^*(p,-\tilde{H}(p))$ is also $f(p)$ if $\tilde{H}(p)$ is finite. Hence, $S^*(p,E^-) = f(p)$ for any $p\in\ri U$ and $E^-\leq -\tilde{H}(p)$.

Now, we consider the case when $p\in U \setminus \ri U$. For the illustration, see \cref{fig:illustrationpf2}. Let $E^- < -\tilde{H}(p)$. Take $q\in \ri U$ and $\tilde{E}^- < -\tilde{H}(q)$, then by \cref{prop1},
\begin{equation}\label{eqt:bdry}
S^*(p, E^-) = \lim_{\alpha \to 0^+} S^*(p + \alpha (q-p), E^- + \alpha (\tilde{E}^- - E^-)) = \lim_{\alpha \to 0^+} f(p + \alpha (q-p)).
\end{equation}
Hence, the value of $S^*(p, E^-)$ does not depend on $E^-$ if $E^- < -\tilde{H}(p)$. Denote this value as $f(p)$. By continuity, $S^*(p,-\tilde{H}(p))=f(p)$ if $\tilde{H}(p)$ is finite. Therefore, we have proved that the domain of $S^*$ coincides with the set $V$ and $S^*(p, E^-) = f(p)$ in the domain of $S^*$.

\bigbreak

Then, we prove $f=J^*$ when restricting to $\dom f$. By setting $f(p)=+\infty$ if $p\not\in U$, we can regard $f$ as a function from $\Rn$ to $\R\cup\{+\infty\}$. It is not hard to check the convexity of $f$.
To be specific, for any $p_1,p_2\in \dom f$ and $\alpha\in (0,1)$, choose $E^- < -\tilde{H}(p_1)$ and $\tilde{E}^- < -\tilde{H}(p_2)$ (see \cref{fig:illustrationpf3}), then we have 
\begin{equation*}
\begin{split}
f(\alpha p_1 + (1-\alpha) p_2) &= S^*(\alpha p_1 + (1-\alpha) p_2, \alpha E^- + (1-\alpha) \tilde{E}^-) \\
&\leq \alpha S^*(p_1, E^-) + (1-\alpha) S^*(p_2, \tilde{E}^-) \\
&= \alpha f(p_1) + (1-\alpha) f(p_2).
\end{split}
\end{equation*}
Hence $f$ is a convex function taking values in $\R\cup \{+\infty\}$. 
Also, for each $x\in\Rn$, we have
\begin{equation*}
\begin{split}
    J(x) &= S(x,0) = \sup_{\update{(p,E^-)\in V}} \langle x, p\rangle - S^*(p,E^-) \\
    &= \sup_{\update{\substack{p\in\Rn \\ \exists E^- \,|\, (p,E^-)\in V}}} \langle x, p\rangle - f(p) 
=\sup_{p\in \dom f} \langle x, p\rangle - f(p)= f^*(x).
\end{split}
\end{equation*}
Therefore, $f^{**}=J^*$, which implies $\ri \dom f = \ri\dom J^*$ and $f(p)=J^*(p)$ if $p\in \ri\dom f$. Moreover,
according to \cref{prop1} and \cref{eqt:bdry},
we deduce that
\begin{equation*}
f(p) = \lim_{\alpha \to 0^+} f(p + \alpha (q-p))
= \lim_{\alpha \to 0^+} J^*(p + \alpha (q-p)) = J^*(p),
\end{equation*}
for any $p\in \dom f \setminus \ri\dom f$ and $q\in \ri\dom f$. As a result we have $f=J^*$ in the domain of definition.
In conclusion, we get the following formula for $S^*$
\begin{equation}\label{eqt:pflem41_formSstar}
    S^*(p,E^-) = J^*(p) + I_{V}(p,E^-).
\end{equation}

\bigbreak
The final part is to prove that $\tilde{H}$ is a convex function taking values in $\R\cup\{+\infty\}$.

\update{
First, we prove that $\tilde{H}$ cannot take the value $-\infty$ by contradiction. Suppose there exists $p\in\Rn$ such that $\tilde{H}(p)$ equals $-\infty$. Then, by definition of $\tilde{H}$ we have $\{p\}\times \R\subseteq \dom S^*$. Together with the formula of $S^*$ in \cref{eqt:pflem41_formSstar}, we derive
\begin{equation*}
\{p\}\times \R \times \{J^*(p)\} \subseteq \epi S^*.    
\end{equation*}
Therefore, $(\mathbf{0},1,0)$ and $(\mathbf{0},-1,0)$ are in the asymptotic cone of $\epi S^*$ by definition \cref{eqt:defasym}. Then, by \cref{prop:asym}, for any $q\in U$, we obtain
\begin{equation*}
\{q\}\times \R \times \{J^*(q)\} \subseteq \epi S^*,    
\end{equation*}
which implies $\{q\} \times \R \subseteq \dom S^*$. Since $q$ is an arbitrary vector in $U$, we deduce that $\dom S^* = U \times \R$. Moreover, according to \cref{eqt:pflem41_formSstar}, the function $S^*$ is a constant on the line $\{q\}\times \R$ for any $q\in U$, which implies that the directional derivative of $S^*$ in the direction $(\mathbf{0},1)$ is zero. In other words, we have}
\begin{equation}\label{eqt:dirderivativeS}
    (S^*)'((p,E^-),(\mathbf{0},1)) = 0 \text{ for any }p\in U\text{ and }E^-\in \R.
\end{equation}
On the other hand, consider any $y\in \Rn$ and $s>0$ such that $\partial S(y,s)$ is nonempty. Let $(p,E^-)\in \partial S(y,s)$. This implies $(y,s)\in \partial S^*(p,E^-)$. Hence, according to \cref{prop5}, we get
\begin{equation*}
(S^*)'((p,E^-),(\mathbf{0},1)) \geq \sup_{(x,t)\in \partial S^*(p,E^-)} \langle (x,t), (\mathbf{0},1)\rangle\geq \langle (y,s), (\mathbf{0},1)\rangle = s > 0,
\end{equation*}
which contradicts \cref{eqt:dirderivativeS}. Therefore, $\tilde{H}$ cannot take the value $-\infty$. 

At last, the convexity of $\tilde{H}$ follows from the convexity of $\dom S^*$.
In fact, $\epi \tilde{H} = \{(p, -E^-):\ (p,E^-)\in \dom S^*\}$, which is a reflection of the convex set $\dom S^*$, hence it is also convex. Therefore, $\tilde{H}$ is a convex function from $\Rn$ to $\R\cup\{+\infty\}$.
\end{proof}

Based on this lemma, the following proposition states the uniqueness result. 
It can be easily seen in the above lemma that the Legendre transform of $S$ has a similar form as $S_H^*$. Actually, the following proposition is proved by equating the two functions $S^*$ and $S_H^*$.

\begin{prop}\label{prop:1}
The solution to the multi-time Hamilton-Jacobi equation is unique. Specifically, under the assumptions (H1) and (H2), if $S$ satisfies (S1)-(S2), then $S=S_H$.
\end{prop}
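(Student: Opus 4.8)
The plan is to reduce the uniqueness statement to an identity between Legendre transforms and then compute $S^*$ explicitly. Extending $S$ by $+\infty$ outside $\Rn\times[0,+\infty)^N$, assumption (S1) gives $S\in\Gamma_0(\R^{n+N})$, so $S=S^{**}$, and likewise $S_H=S_H^{**}$ by \cref{prop:convex}. Hence it suffices to prove $S^*=S_H^*$. The target form is already known from \cref{prop:convex}, namely $S_H^*(p,E^-)=J^*(p)+\sum_{j=1}^N I\{E_j^-+H_j(p)\le 0\}$, so the entire problem becomes computing $S^*$ for a \emph{general} $S$ satisfying (S1)--(S2) and matching it against this expression. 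This is exactly the ``equating $S^*$ and $S_H^*$'' strategy announced before the statement.

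The core of the argument is a multi-time analogue of \cref{lem:1}, obtained by running its proof coordinate by coordinate. First, the monotonicity estimate \cref{eqt:monotone}, applied in each time variable $E_j^-$ separately, shows that for fixed $p$ the slice $\{E^-:(p,E^-)\in\dom S^*\}$ is convex and downward closed. Next I would combine the regularity (S1) with the equation (S2): at any $(x,t)$ with $t$ in the open orthant $(0,+\infty)^N$, $S$ is differentiable, and \cref{eqt:H-J} forces its unique subgradient to equal $\bigl(\nabla_x S(x,t),-H_1(\nabla_x S(x,t)),\dots,-H_N(\nabla_x S(x,t))\bigr)$. Equivalently, if $(x,t)\in\partial S^*(p,E^-)$ has $t$ interior, then $E_j^-=-H_j(p)$ for every $j$. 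Feeding this rigidity into the directional-derivative identity of \cref{prop5} in each time direction $(\mathbf{0},e_j)\in\Rn\times\R^N$---exactly as in \cref{lem:1}---shows that $S^*(p,\cdot)$ is constant on the interior of each slice; identifying that constant with $J^*(p)$ then follows by computing $f^*=J$ from the initial condition $S(x,0,\dots,0)=J(x)$ and invoking \cref{prop1} to pass to the relative boundary. The outcome is a representation $S^*(p,E^-)=J^*(p)+\sum_{j=1}^N I\{E_j^-\le-\tilde H_j(p)\}$ with convex functions $\tilde H_j$.

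It then remains to identify $\tilde H_j=H_j$ and conclude. For the inclusion $\tilde H_j(p)\le H_j(p)$, note that every $p=\nabla_x S(x,t)$ with $t$ interior produces, via \cref{eqt:H-J}, the corner subgradient $(p,-H_1(p),\dots,-H_N(p))\in\partial S(x,t)$, placing that corner in $\dom S^*$; a continuity/coercivity argument (paralleling \cref{sec:prop}) shows these $p$ fill $\ri\dom J^*$. The reverse inclusion uses the subgradient rigidity from the previous paragraph to rule out any slice extending past $-H_j(p)$ in a coordinate. Matching the two expressions on $\ri\dom J^*$ yields $S^*=S_H^*$ there, and \cref{prop1} propagates the equality to the relative boundary, so that $S=S_H$ after one further conjugation. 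I expect the main obstacle to be precisely this final identification: proving that the $E^-$-slices are genuine boxes $\prod_{j}(-\infty,-H_j(p)]$ rather than arbitrary downward-closed convex regions, and controlling $S$ across the boundary $\partial[0,+\infty)^N$ of the time orthant, where (S1) supplies no differentiability and the clean single-variable computation of \cref{lem:1} must be replaced by a limiting argument in each time coordinate.
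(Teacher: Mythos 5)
Your single-time argument is sound and is essentially the paper's own route: it reproduces \cref{lem:1} (monotone slices, subgradient rigidity from the equation, constancy of $S^*(p,\cdot)$ via \cref{prop5}, identification of the constant with $J^*(p)$), followed by the identification $\tilde H = H$ on $\ri\dom J^*$. But your plan for the multi-time case has a genuine gap, and it is exactly the one you flag at the end. The coordinate-by-coordinate rerun of \cref{lem:1} breaks at the rigidity step: for $N=1$, if $(x,t)\in\partial S^*(p,E^-)$ and $E^-\neq -H(p)$, then $t$ cannot be interior, which in one dimension means $t=0$, and hence $(S^*)'((p,E^-),(\mathbf{0},1))=\sup t=0$. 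For $N>1$, the same reasoning only tells you that $t$ lies outside the open orthant, i.e.\ that \emph{some} coordinate $t_j$ vanishes --- not that $t=\mathbf{0}$, and not that the particular coordinate in whose direction you are differentiating vanishes. A subgradient $(x,t)$ with, say, $t_1>0$ and $t_2=0$ sits on a face of the orthant where (S2) imposes no equation and (S1) gives no differentiability, so you cannot force $E_1^-=-H_1(p)$ there; consequently the directional derivative $(S^*)'((p,E^-),(\mathbf{0},e_j))=\sup\{t_j\colon (x,t)\in\partial S^*(p,E^-)\}$ need not vanish, the constancy of $S^*(p,\cdot)$ on slices fails to follow, and the box structure $\prod_{j}(-\infty,-H_j(p)]$ of the slices --- which you correctly identify as indispensable --- is not established. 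Nothing in your sketch (``a limiting argument in each time coordinate'') closes this.

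The paper closes it by \emph{not} generalizing \cref{lem:1} at all. For $N>1$ it fixes arbitrary $\alpha_1,\dots,\alpha_N>0$ and restricts $S$ to the ray $s\mapsto(s\alpha_1,\dots,s\alpha_N)$: the function $T(x,s):=S(x,s\alpha_1,\dots,s\alpha_N)$ is convex, l.s.c., $C^1$ for $s>0$, satisfies $T(x,0)=J(x)$, and by the chain rule together with \cref{eqt:H-J} solves the \emph{single-time} HJ equation with Hamiltonian $H=\sum_{j=1}^N\alpha_jH_j$, which again satisfies (H1) (a positive combination of finite-valued, convex, 1-coercive functions, at least one strictly convex). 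The single-time uniqueness already proved then gives $T(x,s)=(J^*+sH)^*(x)=S_H(x,s\alpha_1,\dots,s\alpha_N)$; letting $\alpha$ and $s$ range over all positive values covers the open orthant, and lower semicontinuity extends the identity to the boundary. If you want to salvage your plan, the cleanest fix is to replace your multi-time analogue of \cref{lem:1} by this ray reduction; otherwise you are left having to prove the box structure of $\dom S^*$ directly, which is precisely the hard, unresolved part of your proposal.
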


\begin{proof}
In the proof of this proposition, we first consider the case of single-time. Let $N=1$, and $H$ be the Hamiltonian.

From \cref{lem:1}, it is proved that $S^*(p,E^-) = J^*(p) + I_{V}(p,E^-)$, where $V = \{(p,E^-):\ E^-\leq -\tilde{H}(p)\}$ and $\tilde{H}$ is a convex function whose domain is the projection of $\dom S^*$ along $(\mathbf{0},1)$. Moreover, $\ri\dom J^* = \ri\dom \tilde{H}$ (note that the domains of $\tilde{H}$ and $f$ are the same).

First, we prove that $\tilde{H}(p)=H(p)$ for any $p\in \ri\dom \tilde{H}$ by contradiction. 
Assume there exists $p\in \ri\dom \tilde{H}$ such that $\tilde{H}(p)\neq H(p)$. Let $E^- = -\tilde{H}(p)$. Then, by \cref{prop7} and \cref{eqt:subgrad_indicator}, we deduce that
\begin{equation}\label{eqt:subdiff_Sstar}
\partial S^*(p,E^-) = \partial J^*(p)\times \{0\} + N_{V}(p,E^-)
= \partial J^*(p)\times \{0\} + \{t(v,1):\ v\in \partial \tilde{H}(p), t\geq 0\},
\end{equation}
where the last equality holds because $V$ is the reflection of $\epi \tilde{H}$. Here $N_{V}(p,E^-)$ denotes the normal cone of the set $V$ at $(p,E^-)$.
Let $x_0 \in \partial J^*(p)$, $t>0$ and $v\in \partial \tilde{H}(p)$. Denote $x=x_0+tv$. Then, by \cref{eqt:subdiff_Sstar} we have  $(x, t)\in \partial S^*(p,E^-)$, which implies $(p,E^-)\in \partial S(x,t)$. However, $E^- + H(p)= -\tilde{H}(p) + H(p)\neq 0$, hence the HJ equation \cref{eqt:H-J} does not hold at $(x,t)$, which is a contradiction. Therefore, $\tilde{H}=H$ when restricting to the relative interior of the domain of $\tilde{H}$, which implies 
\begin{equation*}
S^*(p,E^-) = J^*(p) + I\{E^-\leq -\tilde{H}(p)\} = J^*(p) + I\{E^-\leq -H(p)\} = S_H^*(p,E^-),
\end{equation*}
for any $p\in \ri\dom \tilde{H}$. 

Actually, the values of any convex lower semi-continuous function on the relative boundary of its domain is fully determined by the values in the relative interior. It is not hard to check that
\begin{equation*}
    \ri\dom S^* = \ri\dom S_H^* = \{(p,E^-):\ p\in \ri\dom J^*, E^- < -H(p)\}.
\end{equation*}
Hence, we have proved that $S^*$ and $S_H^*$ agree in the relative interior of the domain. Therefore, $S^*=S_H^*$ in the whole domain, which implies $S=S_H$ and gives the uniqueness of the convex solution to the single-time HJ equation.

\bigbreak
Then, we can consider the case of multi-time. Now, we assume $N>1$. It suffices to prove $S$ and $S_H$ coincide for any $x\in \Rn$ and any $t_1,\cdots, t_N>0$. Let $\alpha_1, \cdots, \alpha_N$ be arbitrary positive real numbers and denote $\alpha := (\alpha_1, \cdots, \alpha_N)$. Define $T(x,s) := S(x, s\alpha_1, \cdots, s\alpha_N)$ for any $x\in\Rn$ and $s\geq 0$. Then $T\in \Gamma_0(\R^{n+1})$.
We can compute the gradient of $T$ with respect to $s$ for any $x\in\Rn$ and $s>0$ using chain rule and the assumption that $S$ satisfies the multi-time HJ equation \cref{eqt:H-J} to obtain
\begin{equation*}
\frac{\partial T(x,s)}{\partial s} = \sum_{j=1}^N \alpha_j \frac{\partial S(x,s\alpha)}{\partial t_j} = -\sum_{j=1}^N \alpha_j H_j(\nabla_x S (x,s\alpha))
= -\sum_{j=1}^N \alpha_j H_j(\nabla_x T (x,s)).
\end{equation*}
It is easy to check that $T$ satisfies the initial condition given by $J$, i.e. $T(x,0) = J(x)$ for any $x\in\Rn$.
Hence, $T$ is a solution to the single-time HJ equation with Hamiltonian $H = \sum_{j=1}^N \alpha_jH_j$, which is finite-valued, 1-coercive and strictly convex. Therefore, for the single-time HJ equation, the conditions (H1)-(H2) and (S1)-(S2) are satisfied.
Then, the solution $T$ is unique and equal to the Hopf formula with respect to the Hamiltonian $H$. Hence, for any $x\in\Rn, s>0$ and any $\alpha_1, \cdots, \alpha_N >0$, we have
\begin{equation*}
S(x, s\alpha_1, \cdots, s\alpha_N) = (J^* + sH)^*(x) = \left(J^* + \sum_{j=1}^N s\alpha_jH_j\right)^*(x) = S_H(x, s\alpha_1, \cdots, s\alpha_N). 
\end{equation*}
Therefore, $S=S_H$ in the relative interior of the domain, which implies $S=S_H$ in the whole space, because of the lower semi-continuity of $S$ and $S_H$. The uniqueness of the solution to the multi-time HJ equation follows.
\end{proof}

One can actually apply the above arguments to weaker assumptions and obtain a generalized result, which is stated in the following corollary. 
In this generalized result, it is possible that the solution $S$ is not a classical solution, hence the subgradients of $S$, instead of the gradients, are assumed to satisfy the HJ equation, which is a natural generalization of the classical solution when we want to consider the solution which is convex and lower semi-continuous.

\begin{cor} \label{cor:uniq}
Let $J\in\Gamma_0(\Rn)$, and $H_1, H_2, \cdots, H_N$ be arbitrary extended-valued functions defined on $\Rn$. Assume there exists a function $S \in \Gamma_0(\Rn\times [0,+\infty)^N)$ satisfying:
\begin{itemize}
\item[(i)]  If $p\in\Rn$ and $E_1^-,\cdots, E_N^-\in \R$ satisfy $(p,E_1^-,\cdots, E_N^-)\in \partial S(x,t_1,\cdots, t_N)$ for some $x\in\Rn$ and $t_1,\cdots, t_N>0$, then $E_j^- + H_j(p)=0$ for any $j=1,\cdots, N$.
\item[(ii)] $S(x,0,\cdots, 0) = J(x)$ for any $x\in\Rn$. 
\end{itemize}
Then, the following statements hold:
\begin{itemize}
\item[1.] For the case of single time, i.e. $N=1$, denote $H=H_1$ to be the Hamiltonian. If there exists $x\in\Rn$, $t>0$ such that $S(x,t)\neq +\infty$, then $S$ is unique and $S= F^*$, where $F$ is defined by
\begin{equation}\label{eqt:cor43_eqt1}
F(p,E^-) := J^*(p) + I\{E^-\leq -H(p)\} + I\{p\in \ri\dom J^*\},
\end{equation}
for any $p\in\Rn$ and $E^-\in \R$. Moreover, the restriction of $H$ on $\ri\dom J^*$ is finite-valued and convex.
\item[2.] For the multi-time case, i.e. $N>1$, if $\tilde{S}$ is another function satisfying the assumptions (i)-(ii) with $\ri\dom \tilde{S} = \ri\dom S$, then $\tilde{S} = S$. In other words, the solution is unique when the relative interior of the domain is given.
\end{itemize}
\end{cor}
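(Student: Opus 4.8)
The plan is to re-run the proofs of \cref{lem:1} and \cref{prop:1}, systematically replacing every appeal to the classical equation \cref{eqt:H-J} and to the $C^1$ regularity (S1) by the single structural input that survives in the nonsmooth setting, namely assumption (i). The translation device is \cref{prop3}: since $(x,t)\in\partial S^*(p,E^-)$ is equivalent to $(p,E^-)\in\partial S(x,t)$, assumption (i) says precisely that any subgradient $(x,t)$ of $S^*$ at $(p,E^-)$ with $t>0$ forces $E^-=-H(p)$ in the single-time case, which is exactly the conclusion that was previously extracted from the PDE. With this substitution in hand, the two parts of the corollary follow the two parts of \cref{prop:1}.

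For the single-time claim (1), I first define $\tilde H(p):=\inf\{-E^-:(p,E^-)\in\dom S^*\}$ and record that $E^-\mapsto S^*(p,E^-)$ is nondecreasing, both exactly as in \cref{lem:1}; the new finiteness hypothesis ensures $\tilde H\not\equiv+\infty$. For $p$ in the relative interior of $U=\{p:\tilde H(p)<+\infty\}$ and $E^-\neq -H(p)$ strictly below $-\tilde H(p)$, assumption (i) makes every subgradient $(x,t)$ of $S^*$ at $(p,E^-)$ satisfy $t=0$, so by \cref{prop5} the directional derivative of $S^*$ along $(\mathbf{0},1)$ vanishes and $S^*(p,\cdot)$ is a constant $f(p)$ on its domain. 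The identity $J(x)=S(x,0)=\sup_p\langle x,p\rangle-f(p)=f^*(x)$, together with the convexity of $f$ proved as in \cref{lem:1}, yields $f=J^*$; the exclusion of the value $-\infty$ for $\tilde H$ carries over verbatim from \cref{lem:1} via \cref{eqt:defasym} and \cref{prop:asym}, so $\tilde H$ is real-valued and convex on $\ri\dom J^*=\ri\dom\tilde H$. I then prove $\tilde H=H$ there by contradiction: if $\tilde H(p)\neq H(p)$ at some such $p$, the sum rule \cref{prop7} with \cref{eqt:subgrad_indicator} gives $\partial S^*(p,-\tilde H(p))=\partial J^*(p)\times\{0\}+N_V(p,-\tilde H(p))$, which contains a point $(x,t)$ with $t>0$, whereupon (i) forces $-\tilde H(p)=-H(p)$, a contradiction. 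This identifies $S^*$ with $F$ of \cref{eqt:cor43_eqt1} on the common relative interior, and since both are convex with the same closure while $S^*$ is already closed, passing to biconjugates gives $S=F^*$; the asserted finiteness and convexity of $H$ restricted to $\ri\dom J^*$ are inherited from $\tilde H$.

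For the multi-time claim (2), I imitate the reduction in \cref{prop:1}. Fix $\alpha=(\alpha_1,\dots,\alpha_N)$ with all $\alpha_j>0$ and set $T(x,s):=S(x,s\alpha)$ and $\tilde T(x,s):=\tilde S(x,s\alpha)$. The linear map $A(x,s)=(x,s\alpha)$ has adjoint $A^*(p,E_1^-,\dots,E_N^-)=(p,\sum_j\alpha_jE_j^-)$, and the subgradient chain rule $\partial T(x,s)=A^*\partial S(x,s\alpha)$ shows that any subgradient of $T$ at $(x,s)$ with $s>0$ has second coordinate $\sum_j\alpha_jE_j^-=-\sum_j\alpha_jH_j(p)=-H(p)$, where $H:=\sum_j\alpha_jH_j$, by (i) applied to $S$. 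Hence both $T$ and $\tilde T$ obey the single-time assumptions (i)-(ii) with the same data $J$ and $H$. Given any point of $\ri\dom S=\ri\dom\tilde S$ whose time coordinates are all positive, choosing the ray through it makes the finiteness hypothesis of part (1) hold along that ray, so part (1) forces $T=F^*=\tilde T$ there, i.e.\ $S=\tilde S$ at that point. Letting the ray direction vary gives $S=\tilde S$ on the subset of $\ri\dom S$ lying in the open orthant; this subset is dense in $\ri\dom S$ (the open segment from such a point to any point of $\ri\dom S$ stays in the open orthant), so the continuity of convex functions on the relative interior, and then the limiting formula on the relative boundary, both in \cref{prop1}, propagate the equality to all of $\ri\dom S$ and hence everywhere.

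The step I expect to be most delicate is the relative-boundary and degeneracy bookkeeping rather than any single estimate. In part (1) one must check the intersection condition $(\ri\dom J^*\times\R)\cap\ri V\neq\emptyset$ before invoking \cref{prop7}, and must argue that coincidence of $S^*$ and $F$ only on the relative interior of the common domain nonetheless forces equality of their closures, so that $S=F^*$ holds globally and not merely on $\ri\dom S$. In part (2) the subtlety is that a single direction $\alpha$ may fail to meet the finite part of the domain, so one genuinely needs the whole family of rays through $\ri\dom S$, and when $\ri\dom S$ meets no open orthant point the reduction degenerates; in that case the solution is confined to a face on which it restricts to a lower time-dimensional HJ solution determined directly by $J$, and the argument must be closed either by this reduction or, at the bottom, by the given initial condition. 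Verifying that the chain rule $\partial T=A^*\partial S$ is valid at the relevant points, where the qualification holds because $A$ surjects onto the coordinates that matter, is the other place I would be most careful before recovering boundary values through \cref{prop1}.
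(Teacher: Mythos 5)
Your proposal is correct and takes essentially the same route as the paper: part (1) re-runs \cref{lem:1} and the single-time half of \cref{prop:1} with subgradients in place of gradients (assumption (i) substituting for the classical PDE via \cref{prop3}), and part (2) performs the same ray-scaling reduction $T(x,s)=S(x,s\alpha)$ with the subdifferential chain rule before invoking part (1). The only substantive divergence is bookkeeping: where the paper simply asserts that every point of $\ri \dom S$ has all time coordinates positive, you interpolate a density-plus-continuity argument from points in the open orthant (the more careful treatment), and the residual degenerate case you flag --- when $\ri \dom S$ misses the open orthant entirely --- is likewise left unaddressed by the paper's own proof.
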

\update{
\begin{proof}
The proof of this corollary is similar to the proof of \cref{prop:1}, so we just give a brief sketch here. First, we adjust the proof of \cref{lem:1} by changing the gradients of $S$ to the subgradients of $S$. The argument still holds because we assume in (i) that the subgradients of $S$ satisfy the HJ equation. Then, we draw the same conclusion as in \cref{lem:1}. In other words, with the function $\tilde{H}$ defined in \cref{eqt:lem41_defHtilde}, we have
\begin{equation}\label{eqt:cor43_eqt2}
S^*(p,E^-) = J^*(p) + I\{E^-\leq -\tilde{H}(p)\}.
\end{equation}
Also, the part of $N=1$ in the proof of \cref{prop:1} still holds. So we derive that the two functions $\tilde{H}$ and $H$ coincide in the relative interior of $\dom J^*$. Together with \cref{eqt:cor43_eqt2}, we derive \cref{eqt:cor43_eqt1}, and hence the first statement in this corollary follows.

For the case when $N>1$, it suffices to prove that $S$ and $\tilde{S}$ coincide in the relative interior of the domain. Let $(y,t_1,\cdots,t_N)$ be an arbitrary point in $\ri\dom S$. It remains to prove that $S$ and $\tilde{S}$ are equal at the point $(y,t_1,\cdots,t_N)$.
Notice that we have $t_i > 0$ for any $i=1,\cdots,N$, then we can choose the positive number $\alpha_i$ in the proof of \cref{prop:1} to be $t_i$ for any $i$. As in the proof of \cref{prop:1}, we define the functions $T$ and $\tilde{T}$ by
\begin{equation*}
    \begin{split}
        T(x,s):= S(x,s\alpha_1,\cdots, s\alpha_N), \quad \text{ and }\quad 
        \tilde{T}(x,s):= \tilde{S}(x,s\alpha_1,\cdots, s\alpha_N),
    \end{split}
\end{equation*}
for any $x\in\Rn$ and $s\geq 0$. Since there exists a point $(y,t_1,\cdots,t_N)$ in the relative interior of $\dom S$, one can easily check that the assumptions in \cite[Thm.XI.3.2.1]{convex_analy_book2} hold. Then, by \cite[Thm.XI.3.2.1]{convex_analy_book2}, the chain rule for the subgradients of $S$ holds. Similarly, the chain rule also holds for the subgradients of $\tilde{S}$. Therefore, the argument in the proof of \cref{prop:1} in the multi-time case remains valid by changing the gradients to the subgradients. As a result, we conclude that both $T$ and $\tilde{T}$ solve the single-time HJ equation with the Hamiltonian $\sum_{j=1}^N\alpha_jH_j$. Then, by the first statement in this corollary, we have $T\equiv \tilde{T}$, which implies that $S$ and $\tilde{S}$ coincide at the point $(y,t_1,\cdots,t_N)$, and the proof is complete.
\end{proof}
}

\section{A Regularization Method for the Degenerate Cases} \label{sec:degenerate}

In the previous two sections, we discussed the relation between some optimization problems and the multi-time HJ equations under the assumptions (H1) and (H2). In general, if those assumptions are not satisfied, some results may collapse. For example, if there is no strictly convex Hamiltonian, then the solution may be non-differentiable, which leads to the non-uniqueness of the maximizer $p$ (called momentum) in the Hopf formula \cref{eqt:hopf}. Also, the minimizer $u$ in the Lax formula \cref{eqt:lax} may be non-unique if the Hamiltonians are not differentiable. However, these are two common situations for optimization problems such as the decomposition models. In fact, any norm or indicator function is neither strictly convex nor differentiable. As a result, it is an important problem to select a meaningful momentum $p$ or minimizer $u$ in the solution set when it contains more than one element. 

In this section, we propose a regularization method to select a unique momentum $p$ and a unique minimizer $u$ simultaneously, and provide the representation formulas for both selected quantities by using the results stated in the previous sections. Intuitively, to select a minimizer $u$, we modify the degenerate term by adding $\lambda H$ to it where $\lambda$ is a positive parameter and $H$ is a differentiable function satisfying (H1). When $\lambda$ approaches zero, the minimizer of the modified problem will converge to the unique minimizer $\bar{u}$ in the solution set of the original problem which minimizes the function $H$. The procedure to select $p$ is the same except performing the inf-convolution with $\lambda H^*(\cdot/\lambda)$ to the degenerate term instead of the addition of $\lambda H$. 

In the literature, the special case selecting the momentum $p$ using inf-convolution with $\|\cdot\|^2/(2\lambda)$ is well-known as Moreau-Yosida approximation, which is introduced, for instance, in \cite[Thm.2, p.144]{Aubin:1984:DIS:577434} and \cite[Thm.3.1, p.54]{brezis1973operateurs}.
\update{Generally, a Moreau-Yosida based regularization method usually selects a unique minimizer $u$ only or a momentum $p$ only, but not both.}
Our contribution here is that we consider the primal problem and the dual problem simultaneously. In other words, one can select the momentum $p$ and the minimizer $u$ at the same time using our method. This analysis can be adapted easily to other decomposition models with more degenerate terms. Moreover, one can also use the same procedure with other function $H$ \update{or even use two different functions in the two added terms. One alternative choice is $\|\cdot\|_\alpha^\alpha / \alpha$ for any $\alpha >1$, for example. In fact, if $H$ is chosen to be any non-negative, finite-valued, 1-coercive, differentiable and strictly convex function, the statements in this section still hold.
To be specific, the proofs of \cref{lem:uniq_bdd}, \cref{lem:cluster_grad} and \cref{prop:conv_general} hold after subtle adjustment, and one can use subdifferential calculus to prove \cref{lem:cluster_pt}. In this paper, for simplicity, we mainly focus on the quadratic regularization terms, which are usually preferred in practice because of the simplicity and efficiency of numerical implementation.}

\bigbreak
Now, we focus on a specific decomposition model, and the regularization function $H$ is chosen to be $\|\cdot\|_2^2/2$. Some other models can be analyzed using similar arguments. Let $\|\cdot\|$ and $\normiii{\cdot}$ be two arbitrary norms whose dual norms are denoted as $\|\cdot\|_*$ and $\normiii{\cdot}_*$. \update{In fact, all the results remain valid if $\|\cdot\|$ and $\normiii{\cdot}$ are two semi-norms, in which case the corresponding dual norms $\|\cdot\|_*$ and $\normiii{\cdot}_*$ are finite in some subspaces and equal to $+\infty$ otherwise.} The set of minimizers is defined as follows
\begin{equation*}
U(x,t) := \argmin_{u\in \Rn} \|u\| + I\{\normiii{x-u}_*\leq t\}.
\end{equation*}
We can regard the minimal value as a solution to the HJ equation given by the Lax formula with spatial variable $x\in\Rn$ and time variable $t >0$ and define
\begin{equation*} 
S(x,t) := \min_{u\in \Rn} \|u\| + I\{\normiii{x-u}_*\leq t\}.
\end{equation*}
Note that in the corresponding HJ equation, the initial function is $\|\cdot\|$ and the Hamiltonian is $\normiii{\cdot}$, hence the assumption (H1) is not satisfied. As a result, we need to apply the regularization method in this example. 
For simplicity we also use $F_1$, $F_2$ to denote these two norms, then $F_2^*(y)=I\{\normiii{y}_*\leq \update{t}\}$. We assume $t=1$ and drop the variable $t$ in the remainder of this section because the variation of $t$ is not considered in this problem. Then, we can rewrite the problem as the following
\begin{equation}\label{eqt:original_S}
\begin{split}
U(x) &= \argmin_{u\in \Rn} F_1(u) + F_2^*(x-u), \\
S(x) &= \min_{u\in \Rn} F_1(u) + F_2^*(x-u).
\end{split}
\end{equation}
\update{In fact, there are in practice some useful models in the literature which can fit in this form. Now, we give two examples. In what follows, we use $\|\cdot\|_{TV}$, $\|\cdot\|_E$ and $\|\cdot\|_G$ to denote the discrete total variation semi-norm, the discrete $E-$norm and the discrete $G-$norm, respectively.
First, in \cite{aujol2003image,aujol2005image}, it is shown that the Meyer's model in the following form
\begin{equation*}
    \argmin_{u\in \Rn} \|u\|_{TV} + \alpha \|x-u\|_G
\end{equation*}
is equivalent to 
\begin{equation*}
    \argmin_{u\in \Rn} \|u\|_{TV} + I\{\|x-u\|_G\leq \beta\},
\end{equation*}
for some suitable positive parameter $\beta$. In this example, both $F_1$ and $F_2$ are the discrete total variation because the discrete $G-$norm is the dual norm of $\|\cdot\|_{TV}$. 
Similarly, another Meyer's model stated as follows
\begin{equation*}
    \argmin_{u\in \Rn} \|u\|_{TV} + \alpha \|x-u\|_E
\end{equation*}
is equivalent to
\begin{equation*}
    \argmin_{u\in \Rn} \|u\|_{TV} + I\{\|x-u\|_{E}\leq \beta\},
\end{equation*}
for some suitable positive parameter $\beta$ \cite{aujol2005dual}. In this example, the functions $F_1$ and $F_2$ are the discrete total variation and the dual norm of the discrete $E-$norm, respectively.}

As mentioned above, we apply two operators to the function $F_1$ and obtain its approximation
\begin{equation}\label{eqt:def_relaxedF}
F_{1,\lambda,\mu} := \left(F_1 + \frac{\lambda}{2}\|\cdot\|_2^2\right)\conv \frac{1}{2\mu}\|\cdot\|_2^2,    
\end{equation}
where $\lambda, \mu>0$ are small regularization parameters. Here, we choose to modify the function $F_1$, but one may instead apply the operators to the function $F_2$ and the analysis is similar.
Then, the problem reads
\begin{equation} \label{eqt:relaxed}
\begin{split}
u_{\lambda,\mu}(x) &= \argmin_{u\in \Rn} F_{1,\lambda,\mu}(u) + F_2^*(x-u),\\
S_{\lambda,\mu}(x) &= \min_{u\in \Rn} F_{1,\lambda,\mu}(u) + F_2^*(x-u).
\end{split}
\end{equation}
We expand the inf-convolution to get
\begin{equation}\label{eqt:P2}
\begin{split} 
&u_{\lambda,\mu} = x - w_{\lambda,\mu},\\
&(v_{\lambda,\mu}, w_{\lambda,\mu}) := \argmin_{v,w\in \Rn} F_1(v) + \frac{\lambda}{2}\|v\|_2^2 + F_2^*(w) + \frac{1}{2\mu}\|x-v-w\|_2^2,\\
&S_{\lambda,\mu} := \min_{v,w\in \Rn} F_1(v) + \frac{\lambda}{2}\|v\|_2^2 + F_2^*(w) + \frac{1}{2\mu}\|x-v-w\|_2^2.
\end{split}
\end{equation}
Here and later in this section, we omit the variable $x$ when there is no ambiguity.

By introducing the quadratic terms, the uniqueness of $(v_{\lambda,\mu}, w_{\lambda,\mu})$ and the differentiability of $S_{\lambda, \mu}$ are guaranteed. When the parameters $\lambda$ and $\mu$ converge to zero in a comparable rate, the reasonable minimizer $u$ and momentum $p$ are selected. In fact, they are the elements with the minimal $l^2$ norms in the target sets $U(x)$ and $\partial S(x)$. The detailed statements are listed as follows.

\begin{lem} \label{lem:uniq_bdd}
\update{
For any $\lambda, \mu >0$, there is a unique minimizer $(v_{\lambda,\mu}, w_{\lambda,\mu})$ to the problem \cref{eqt:P2}. Moreover, for any positive constant $K$,
the sets $\{v_{\lambda,\mu}: \lambda,\mu\in(0,K)\}$ and $\{w_{\lambda,\mu}: \lambda,\mu\in(0,K)\}$ are bounded.
}
\end{lem}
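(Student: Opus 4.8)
The plan is to obtain all three assertions --- existence, uniqueness, and uniform boundedness --- directly from the structure of the objective in \cref{eqt:P2}, which I write as $g_{\lambda,\mu}(v,w) := F_1(v) + \frac{\lambda}{2}\|v\|_2^2 + F_2^*(w) + \frac{1}{2\mu}\|x-v-w\|_2^2$, recalling that $F_1 = \|\cdot\|$ is a (semi)norm and that $F_2^*(w) = I\{\normiii{w}_*\leq 1\}$ is the indicator of the dual unit ball $B := \{w\in\Rn:\ \normiii{w}_*\leq 1\}$. First I would note that $g_{\lambda,\mu}\in\Gamma_0(\R^{2n})$, being a sum of proper convex lower semicontinuous functions, and that it is proper since $g_{\lambda,\mu}(0,0) = \frac{1}{2\mu}\|x\|_2^2 < +\infty$ (here $F_1(0)=0$ and $F_2^*(0)=0$ because $\normiii{0}_* = 0\leq 1$).

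For uniqueness I would show that $g_{\lambda,\mu}$ is strictly convex on its convex domain $\Rn\times B$ by a short case analysis on a pair of distinct feasible points $(v_1,w_1)\neq(v_2,w_2)$. If $v_1\neq v_2$, the term $\frac{\lambda}{2}\|v\|_2^2$ is strictly convex and forces a strict inequality in the convexity estimate for $g_{\lambda,\mu}$; if instead $v_1=v_2$ but $w_1\neq w_2$, then the arguments $x-v_1-w_1$ and $x-v_2-w_2$ differ, so the term $\frac{1}{2\mu}\|x-v-w\|_2^2$ is strictly convex and again forces strict inequality. Strict convexity on a convex set rules out two distinct minimizers, giving uniqueness once existence is established.

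For existence I would invoke coercivity on the effective domain. Because $F_2^*$ is the indicator of $B$, every finite-value point has $w\in B$ and there $g_{\lambda,\mu}(v,w)\geq \frac{\lambda}{2}\|v\|_2^2$, so the sublevel set $\{g_{\lambda,\mu}\leq c\}$ is contained in $\{\|v\|_2\leq\sqrt{2c/\lambda}\}\times B$. The step I expect to require the most care is verifying that $B$ is bounded: when $\normiii{\cdot}$ is a genuine norm this is immediate, while in the semi-norm case the dual norm $\normiii{\cdot}_*$ is finite only on the annihilator of the null space of $\normiii{\cdot}$ and restricts there to an honest norm, so $B$ is still a bounded (indeed compact) subset of that subspace. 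Granting this, the sublevel sets are bounded and nonempty, and lower semicontinuity yields a minimizer, which is unique by the previous paragraph.

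Finally, for the uniform boundedness I would show the two bounds are in fact independent of $\lambda$ and $\mu$. Every minimizer satisfies $w_{\lambda,\mu}\in B$, so $\|w_{\lambda,\mu}\|_2\leq R_B := \sup_{w\in B}\|w\|_2 < +\infty$ with no dependence on the parameters. For $v_{\lambda,\mu}$ the apparent difficulty is the coefficient $1/(2\mu)$, which blows up as $\mu\to 0$; the resolution is that it cancels. Comparing with $(0,0)$ gives $g_{\lambda,\mu}(v_{\lambda,\mu},w_{\lambda,\mu})\leq g_{\lambda,\mu}(0,0) = \frac{1}{2\mu}\|x\|_2^2$, and discarding the nonnegative terms $F_1$, $\frac{\lambda}{2}\|\cdot\|_2^2$ and $F_2^*$ leaves $\frac{1}{2\mu}\|x-v_{\lambda,\mu}-w_{\lambda,\mu}\|_2^2\leq\frac{1}{2\mu}\|x\|_2^2$, i.e.\ $\|x-v_{\lambda,\mu}-w_{\lambda,\mu}\|_2\leq\|x\|_2$ after multiplying by $2\mu$. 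A triangle inequality then gives $\|v_{\lambda,\mu}\|_2\leq\|x\|_2+\|w_{\lambda,\mu}\|_2+\|x-v_{\lambda,\mu}-w_{\lambda,\mu}\|_2\leq 2\|x\|_2+R_B$, a bound uniform over all $\lambda,\mu>0$ and in particular over the range $(0,K)$ in the statement.
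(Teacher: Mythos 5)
Your proof is correct, and for the key part of the lemma---the uniform boundedness---it takes a genuinely different route from the paper. The existence and uniqueness arguments coincide in substance (the paper simply asserts 1-coercivity and strict convexity of the objective in \cref{eqt:P2}; you spell out the case analysis showing the two quadratic terms are jointly strictly convex in $(v,w)$, and you treat the boundedness of the dual ball in the semi-norm case more carefully than the paper's ``all norms are equivalent'' remark). The difference lies in the comparison point: the paper tests the objective at $(v, x-v)$ with $v\in U(x)$, which kills the residual term and yields $S_{\lambda,\mu}(x)\leq S(x) + \frac{\lambda}{2}\min_{v\in U(x)}\|v\|_2^2$, whereas you test at $(0,0)$, which yields $\|x-v_{\lambda,\mu}-w_{\lambda,\mu}\|_2\leq \|x\|_2$ after the factors of $\frac{1}{2\mu}$ cancel. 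Your choice buys self-containedness and a slightly stronger conclusion: the bound $2\|x\|_2+R_B$ is uniform over \emph{all} $\lambda,\mu>0$, and you never need $U(x)\neq\emptyset$ or finiteness of $S(x)$ in \cref{eqt:original_S}. What the paper's choice buys is the quantitative estimate $\|x-v_{\lambda,\mu}-w_{\lambda,\mu}\|_2^2\leq 2C\mu$ in \cref{eqt:bdduv}, i.e.\ a residual that vanishes as $\mu\to 0^+$; this is precisely what \cref{lem:cluster_pt} invokes to conclude $v_{\lambda,\mu}+w_{\lambda,\mu}\to x$. Your estimate $\|x-v_{\lambda,\mu}-w_{\lambda,\mu}\|_2\leq\|x\|_2$ does not decay in $\mu$, so if one adopted your proof, the convergence claim in \cref{lem:cluster_pt} would still require rerunning the paper's comparison (or an equivalent argument) there. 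This is not a gap in your proof of the present lemma, only a loss of a by-product the paper exploits downstream.
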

\begin{proof}
It is easy to check that the objective function in \cref{eqt:P2} is 1-coercive and strictly convex, because of the 1-coercivity and strict convexity of the quadratic terms. Therefore, there exists a unique minimizer $(v_{\lambda,\mu}, w_{\lambda,\mu})$. 

Setting $w=x-v$ and $v\in U(x)$ in \cref{eqt:P2} and comparing it with \cref{eqt:original_S}, we obtain
\begin{equation*}
S_{\lambda, \mu}(x) \leq \min_{v\in U(x)} F_1(v) + \frac{\lambda}{2}\|v\|_2^2 + F_2^*(x-v) = S(x) + \lambda \min_{v\in U(x)} \frac{1}{2}\|v\|_2^2.
\end{equation*}
Denote $C := S(x) + \min_{v\in U(x)} \frac{\update{K}}{2}\|v\|_2^2$, \update{where $K$ is an arbitrary positive number as defined in the statement.} Then $C$ is independent of $\lambda$ and $\mu$, and $S_{\lambda, \mu}(x) \leq C$ when \update{$0<\lambda<K$}.
From this inequality and the definition of $S_{\lambda, \mu}(x)$ in \cref{eqt:P2}, we can derive a bound for $x-v_{\lambda,\mu}- w_{\lambda,\mu}$ that reads
\begin{equation}\label{eqt:bdduv}
\|x-v_{\lambda,\mu}- w_{\lambda,\mu}\|_2^2 \leq 2\mu S_{\lambda, \mu}(x) \leq 2C\mu \update{ \,\leq 2CK \text{ whenever $\mu<K$}}.
\end{equation}
Therefore, $v_{\lambda,\mu}+ w_{\lambda,\mu}$ is bounded \update{by the constant $\|x\|_2 + \sqrt{2CK}$ when we assume $\lambda, \mu \in (0,K)$}. 

Then, from the constraint given by the indicator function $F_2^*$ in the minimization problem \cref{eqt:P2}, we have $\normiii{w_{\lambda,\mu}}_*\leq 1$, which implies the boundedness of $w_{\lambda,\mu}$ because all the norms are equivalent in the finite-dimensional space $\Rn$.
As a result, $v_{\lambda,\mu}$ is also bounded \update{whenever $\lambda,\mu\in (0,K)$}. Then the conclusion follows.
\end{proof}

\begin{lem} \label{lem:cluster_pt}
Let $v_{\lambda,\mu}$ and $w_{\lambda,\mu}$ be defined by \cref{eqt:P2}. Then, we have $\lim_{\lambda,\mu \to 0^+}
v_{\lambda,\mu} + w_{\lambda,\mu} = x.$
Any cluster point of $v_{\lambda,\mu}$ is also a cluster point of $u_{\lambda,\mu}$ and vice versa. Moreover, any cluster point of $u_{\lambda,\mu}$ and $v_{\lambda,\mu}$ is in $U(x)$.
\end{lem}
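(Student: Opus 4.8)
The plan is to reduce everything to the quantitative estimate $\|x - v_{\lambda,\mu} - w_{\lambda,\mu}\|_2^2 \le 2C\mu$ recorded in \cref{eqt:bdduv}, together with the upper bound $S_{\lambda,\mu}(x) \le S(x) + \lambda \min_{v\in U(x)} \frac12\|v\|_2^2$, both obtained in the proof of \cref{lem:uniq_bdd}. The first claim is then immediate: for $\lambda,\mu$ in a fixed bounded range the constant $C$ is uniform, so the right-hand side $2C\mu$ tends to zero as $\mu\to 0^+$ and hence $v_{\lambda,\mu} + w_{\lambda,\mu}\to x$. Since $u_{\lambda,\mu} = x - w_{\lambda,\mu}$, the difference $u_{\lambda,\mu} - v_{\lambda,\mu} = x - v_{\lambda,\mu} - w_{\lambda,\mu}$ also tends to zero; therefore along any sequence $(\lambda_k,\mu_k)\to(0,0)$ the vectors $u_{\lambda_k,\mu_k}$ and $v_{\lambda_k,\mu_k}$ share the same limit whenever either converges, which proves the identification of their cluster points in both directions. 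Existence of cluster points is guaranteed by the boundedness in \cref{lem:uniq_bdd}.

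For the last assertion it suffices, by this identification, to treat a cluster point $\bar v$ of $\{v_{\lambda,\mu}\}$; I would fix a sequence $v_k := v_{\lambda_k,\mu_k}\to\bar v$ with $(\lambda_k,\mu_k)\to(0,0)$ and set $w_k := w_{\lambda_k,\mu_k}$, so that $w_k \to x - \bar v$ by the first step. Feasibility follows because $F_2^*(w_k)=0$ forces $\normiii{w_k}_* \le 1$, and $\normiii{\cdot}_*$, being a supremum of linear functionals, is lower semicontinuous with closed sublevel sets, so $\normiii{x - \bar v}_* \le 1$ and $F_2^*(x-\bar v)=0$. For optimality, discarding the three nonnegative terms other than $F_1(v_k)$ in the definition of $S_{\lambda_k,\mu_k}(x)$ gives $F_1(v_k) \le S_{\lambda_k,\mu_k}(x) \le S(x) + \lambda_k \min_{v\in U(x)}\frac12\|v\|_2^2$; letting $k\to\infty$ and using lower semicontinuity of the seminorm $F_1$ yields $F_1(\bar v) \le S(x)$. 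Combining this with feasibility gives $F_1(\bar v) + F_2^*(x-\bar v) = F_1(\bar v) \le S(x)$, while the reverse inequality is built into the definition of $S(x)$ as a minimum; hence $\bar v\in U(x)$.

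The two limit statements are essentially bookkeeping once \cref{eqt:bdduv} is in hand, so the only real content is the optimality half of the third claim. The delicate point there is to respect the seminorm setting announced earlier in the section: one cannot invoke finiteness or coercivity of $\normiii{\cdot}_*$ or $F_1$, and the argument instead rests precisely on the properties that survive for seminorms, namely closedness of the feasible set $\{y:\normiii{y}_*\le 1\}$ and lower semicontinuity of $F_1$. Keeping the estimate $F_1(v_k)\le S_{\lambda_k,\mu_k}(x)$ on the correct side and threading in the $\lambda_k$-dependent upper bound at the right moment is the main thing to get right.
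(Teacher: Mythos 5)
Your proof is correct, but it takes a genuinely different route from the paper for the main claim. The two preliminary statements (convergence of $v_{\lambda,\mu}+w_{\lambda,\mu}$ to $x$ via \cref{eqt:bdduv}, and the identification of cluster points of $u_{\lambda,\mu}$ and $v_{\lambda,\mu}$) are handled exactly as in the paper. For the assertion that cluster points lie in $U(x)$, however, the paper does not argue directly: it rewrites the regularized problem \cref{eqt:P2} as a Hopf formula \cref{eqt:hopf_vw}, introduces the associated multi-time HJ equation \cref{eqt:HJeqt} with time variables $\mu$ and $\nu=\lambda\mu$ and initial data $J$ with $J^*(v,w)=\frac12\|v+w\|_2^2+F_2^*(w)$, identifies $(v_{\lambda,\mu},w_{\lambda,\mu})$ with the spatial gradient $\nabla_{y,z}\tilde S(x,x,\mu,\lambda\mu)$, and then invokes the conclusion of \cref{thm:conv_grad}(i) after verifying that the four ingredients its proof actually needs (nonempty $\partial J(x,x)$, finite Hamiltonians, differentiability of $\tilde S$, boundedness of the gradients with limit points in $\partial J(x,x)$, the last via \cref{eqt:subgJ}) survive even though (H1) fails. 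Your argument instead splits the claim into feasibility ($\normiii{w_k}_*\le 1$ passes to the limit because the dual ball is closed) and optimality ($F_1(v_k)\le S_{\lambda_k,\mu_k}(x)\le S(x)+\lambda_k\min_{v\in U(x)}\frac12\|v\|_2^2$, then let $k\to\infty$ using continuity/lower semicontinuity of $F_1$), and concludes by comparing with the definition of $S(x)$ in \cref{eqt:original_S}. This is shorter, entirely elementary, and correctly isolates the properties that persist in the seminorm setting; what it forgoes is the point the paper is deliberately making in this section, namely that the variational result \cref{thm:conv_grad} extends to and can be applied in degenerate situations where (H1) fails, which is the conceptual bridge between the regularization method and the HJ framework developed earlier.
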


\begin{proof}
The convergence of $v_{\lambda,\mu}+ w_{\lambda,\mu}$ to $x$ follows from \cref{eqt:bdduv}. Since $u_{\lambda,\mu} = x - w_{\lambda,\mu}$, any cluster point of $u_{\lambda,\mu}$ is also a cluster point of $v_{\lambda,\mu}$ and vice versa.
It remains to show that any cluster point of $v_{\lambda,\mu}$ is in $U(x)$.

By the definition of $(v_{\lambda,\mu}, w_{\lambda,\mu})$, we have
\begin{equation}\label{eqt:hopf_vw}
\begin{split}
(v_{\lambda,\mu}, w_{\lambda,\mu}) &= \argmin_{v,w\in \Rn} F_1(v) + \frac{\lambda}{2}\|v\|_2^2 + I\{\normiii{w}_*\leq 1\} + \frac{1}{2\mu}\|x-v-w\|_2^2\\
&= \argmin_{v,w\in \Rn}\  \mu F_1(v) + \frac{\lambda\mu}{2}\|v\|_2^2 + I\{\normiii{w}_*\leq 1\} + \frac{1}{2}\|x-v-w\|_2^2\\
& = \argmax_{v,w\in \Rn}\  \langle x, v\rangle + \langle x, w\rangle - \left(\frac{1}{2}\|v+w\|_2^2 + F_2^*(w) + \mu F_1(v) + \frac{\lambda\mu}{2}\|v\|_2^2 \right),
\end{split}
\end{equation}
where we first multiply the objective function by $\mu$ and then expand the quadratic term. \update{Recall that any indicator function is invariant under multiplication with a positive constant, hence we obtain $I\{\normiii{w}_*\leq 1\} = \mu I\{\normiii{w}_*\leq 1\}$ and the second equality in \cref{eqt:hopf_vw} follows.}
The last maximization problem in \cref{eqt:hopf_vw} is in the form of Hopf formula. The corresponding multi-time HJ equation with time variables $\mu$ and $\nu=\lambda \mu$ is given by
\begin{equation} \label{eqt:HJeqt}
\begin{cases}
\frac{\partial}{\partial\mu}\tilde{S}(y,z,\mu,\nu) + F_1(\nabla_y \tilde{S}(y,z,\mu,\nu)) = 0, &y,z\in \Rn; \mu, \nu > 0;\\
\frac{\partial}{\partial\nu}\tilde{S}(y,z,\mu,\nu) +\frac{1}{2} \|\nabla_y \tilde{S}(y,z,\mu,\nu)\|_2^2 = 0, &y,z\in \Rn; \mu, \nu > 0;\\
\tilde{S}(y,z,0,0) = J(y,z), &y,z\in \Rn.
\end{cases}
\end{equation}
Here, $J$ is the l.s.c. convex function such that $J^*(v,w) = \frac{1}{2}\|v+w\|_2^2 + F_2^*(w)$.
Although the assumption (H1) is not satisfied, 
\update{by \cref{eqt:hopf_vw} and \cref{lem:uniq_bdd}, we know that the Hopf formula is well-defined in $\mathbb{R}^{n} \times \mathbb{R}^{n} \times [0,+\infty)\times [0,+\infty)$}. Moreover, the solution $\tilde{S}$ is the classical solution to the multi-time HJ equation \cref{eqt:HJeqt} and its spatial gradient equals $(v_{\lambda,\mu}, w_{\lambda,\mu})$. To be specific, we have
\begin{equation}\label{eqt:spatialgS}
(v_{\lambda,\mu}, w_{\lambda,\mu}) = \nabla_{y,z} \tilde{S}(x,x,\mu, \lambda\mu).
\end{equation}

Then, we want to apply the results in \cref{thm:conv_grad} (i) to prove that any cluster point of $v_{\lambda,\mu}$ is in $U(x)$. In fact, under the basic assumptions that $H_j, J\in\gmRn$ and the Hopf formula is well-defined, 
the proof of \cref{thm:conv_grad} (i) only requires the following statements: 
\setlist[enumerate]{leftmargin=.5in}
\setlist[itemize]{leftmargin=.5in}
\begin{itemize}
    \item[(a)] $\partial J(x,x)$ is non-empty;
    \item[(b)] the Hamiltonians are finite-valued;
    \item[(c)] $\tilde{S}$ is differentiable;
    \item[(d)] the spatial gradient $\nabla_{y,z} \tilde{S}(x,x,\mu, \lambda\mu)$ is bounded with all limit points in $\partial J(x,x)$.
\end{itemize}
\setlist[enumerate]{leftmargin=.25in}
\setlist[itemize]{leftmargin=.25in}
The statements (b) and (c) are obvious satisfied. 
It is straightforward to check $\partial J(x,x)\neq \emptyset$. Specifically, $(v,w)\in \partial J(x,x)$ iff $(x,x)\in \partial J^*(v,w)$.
By simple computation, $\partial J^*(v,w) = (v+w, v+w + \partial F_2^*(w))$. Then we obtain
\begin{equation}\label{eqt:subgJ}
(v,w)\in \partial J(x,x) \text{ iff } v+w = x \text{ and } \normiii{w}_*\leq 1.
\end{equation}
Such $v$ and $w$ always exist, hence $\partial J(x,x) \neq \emptyset$. 
As for the statement (d), the boundedness of $\nabla_{y,z} \tilde{S}(x,x,\mu, \lambda\mu)$ follows from \cref{eqt:spatialgS} and \cref{lem:uniq_bdd}. By \cref{eqt:bdduv}, $v_{\lambda,\mu}+ w_{\lambda,\mu}$ converges to $x$. Also, $\normiii{w_{\lambda,\mu}}_*\leq 1$ is given by the constraint imposed by $F_2^*$ in the minimization problem \cref{eqt:P2}. Together with \cref{eqt:spatialgS}, we can conclude that any limit point of $\nabla_{y,z} \tilde{S}(x,x,\mu, \lambda\mu)$, denoted as $(v,w)$, satisfies $v+w = x$ and $\normiii{w}_*\leq 1$. Hence, $(v,w)\in\partial J(x,x)$ by \cref{eqt:subgJ} and the statement (d) is proved.

Therefore, the conclusion of \cref{thm:conv_grad} (i) still holds although the assumption (H1) is not satisfied. As a result, for any cluster point $(\bar{v},\bar{w})$ of $(v_{\lambda,\mu}, w_{\lambda,\mu})$,
\begin{equation*}
\begin{split}
(\bar{v},\bar{w}) &\in \argmax_{(v,w) \in \partial J(x,x)} -F_1(v) 
= \argmin_{v+w=x,\ \normiii{w}_*\leq 1} F_1(v)
= \{(v,w):\ v\in U(x), w = x-v\},
\end{split}
\end{equation*}
where the last two equalities follow from \cref{eqt:subgJ} and the definition of $U(x)$ in \cref{eqt:original_S}.
In conclusion, any cluster point $\bar{v}$ of $v_{\lambda,\mu}$ is in $U(x)$.
\end{proof}

\begin{lem} \label{lem:cluster_grad}
\update{
For any $\lambda, \mu>0$, the function $S_{\lambda,\mu}$ defined in \cref{eqt:relaxed} is differentiable. Let $x\in\Rn$ and define $p_{\lambda,\mu} := \nabla S_{\lambda,\mu}(x)$. Then for any positive constant $K$, the set of gradients $\{p_{\lambda,\mu}: \lambda,\mu\in(0,K)\}$ is bounded.}
Moreover, as $\lambda$ and $\mu$ approach zero, any cluster point of $p_{\lambda,\mu}$ is in $\partial S(x)$.
\end{lem}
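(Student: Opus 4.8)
The plan is to establish the three assertions in order: differentiability of $S_{\lambda,\mu}$, boundedness of the gradients, and the location of their cluster points. For differentiability I would pass to the conjugate and invoke \cref{prop6}. Since $F_{1,\lambda,\mu} = (F_1 + \frac{\lambda}{2}\|\cdot\|_2^2)\conv \frac{1}{2\mu}\|\cdot\|_2^2$ as in \cref{eqt:def_relaxedF} and $S_{\lambda,\mu} = F_{1,\lambda,\mu}\conv F_2^*$ as in \cref{eqt:relaxed}, two applications of the identity $(f\conv g)^* = f^*+g^*$ coming from \cref{prop4} give
\[
S_{\lambda,\mu}^* = \left(F_1 + \tfrac{\lambda}{2}\|\cdot\|_2^2\right)^* + \tfrac{\mu}{2}\|\cdot\|_2^2 + F_2 .
\]
The hypotheses of \cref{prop4} hold because $F_1+\frac{\lambda}{2}\|\cdot\|_2^2$ is $1$-coercive, so its conjugate is finite-valued by \cref{prop6}, and $F_2$ is finite-valued. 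The summand $\frac{\mu}{2}\|\cdot\|_2^2$ is strictly convex while the other two summands are convex and finite-valued, so $S_{\lambda,\mu}^*$ is strictly convex on $\Rn$; by \cref{prop6} the function $S_{\lambda,\mu}$ is differentiable, which proves the first assertion.

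Next I would read off a usable description of $p_{\lambda,\mu}=\nabla S_{\lambda,\mu}(x)$ from the optimality conditions of \cref{eqt:P2}. Since the objective in \cref{eqt:P2} depends on $x$ only through $\frac{1}{2\mu}\|x-v-w\|_2^2$, the envelope theorem together with the stationarity conditions in $v$ and $w$ at the unique minimizer $(v_{\lambda,\mu},w_{\lambda,\mu})$ yields
\[
p_{\lambda,\mu} = \tfrac{1}{\mu}\left(x - v_{\lambda,\mu} - w_{\lambda,\mu}\right),
\qquad
p_{\lambda,\mu} - \lambda v_{\lambda,\mu} \in \partial F_1(v_{\lambda,\mu}),
\qquad
p_{\lambda,\mu} \in \partial F_2^*(w_{\lambda,\mu}).
\]
The crucial point for boundedness is that $F_1$ is a (semi-)norm, so $\partial F_1(v)$ is contained in the dual unit ball $\{q:\|q\|_*\leq 1\}$ for every $v$, a fixed bounded set independent of $\lambda,\mu$. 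By contrast the inclusion $p_{\lambda,\mu}\in\partial F_2^*(w_{\lambda,\mu})$ only places $p_{\lambda,\mu}$ in a normal cone, which is unbounded, and the naive estimate from \cref{eqt:bdduv} gives merely $\|p_{\lambda,\mu}\|_2\leq\sqrt{2C/\mu}$, which degenerates as $\mu\to0^+$; so this structural fact about the subdifferential of a norm is the heart of the argument. Combining $p_{\lambda,\mu}-\lambda v_{\lambda,\mu}\in\partial F_1(v_{\lambda,\mu})$ with the boundedness of $\{v_{\lambda,\mu}:\lambda,\mu\in(0,K)\}$ from \cref{lem:uniq_bdd} and $\lambda<K$ gives a uniform bound on $\|p_{\lambda,\mu}\|$, proving the second assertion.

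Finally, for the cluster points I would take a sequence $(\lambda_k,\mu_k)\to(0^+,0^+)$ with $p_{\lambda_k,\mu_k}\to\bar p$ and use \cref{lem:uniq_bdd} to pass to a subsequence along which $v_{\lambda_k,\mu_k}\to\bar v$ and $w_{\lambda_k,\mu_k}\to\bar w$. By \cref{lem:cluster_pt}, $\bar v\in U(x)$ and $\bar v+\bar w=x$, so $\bar w = x-\bar v$. Passing to the limit in the two inclusions above, using $\lambda_k v_{\lambda_k,\mu_k}\to 0$ together with the closedness (outer semicontinuity) of the graphs of $\partial F_1$ and $\partial F_2^*$ \cite[Prop.XI.4.1.1]{convex_analy_book2}, yields $\bar p\in\partial F_1(\bar v)$ and $\bar p\in\partial F_2^*(x-\bar v)$. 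Since $\bar v\in U(x)$ is a minimizer of the inf-convolution $S = F_1\conv F_2^*$ from \cref{eqt:original_S}, \cref{prop4} gives $\partial S(x) = \partial F_1(\bar v)\cap\partial F_2^*(x-\bar v)$, whence $\bar p\in\partial S(x)$, completing the proof. The main obstacle throughout is the boundedness step, where the quadratic regularization alone is insufficient and one must exploit that the subdifferential of the norm $F_1$ lies in a fixed bounded dual ball.
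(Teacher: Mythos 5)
Your proof is correct, and its second and third steps coincide with the paper's own argument: boundedness is obtained exactly as in the paper, from the inclusion $p_{\lambda,\mu}-\lambda v_{\lambda,\mu}\in\partial F_1(v_{\lambda,\mu})$, the fact that the subdifferential of the (semi-)norm $F_1$ lies in a fixed bounded set, and the bound on $v_{\lambda,\mu}$ from \cref{lem:uniq_bdd}; the cluster-point step likewise matches the paper (graph-closedness of $\partial F_1$ and $\partial F_2^*$, \cref{lem:cluster_pt} to get $\bar v\in U(x)$ and $\bar w=x-\bar v$, then the identification $\partial S(x)=\partial F_1(\bar v)\cap\partial F_2^*(x-\bar v)$ via \cref{prop4}). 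Where you genuinely diverge is the differentiability step. You pass to the conjugate, compute $S_{\lambda,\mu}^*=\bigl(F_1+\tfrac{\lambda}{2}\|\cdot\|_2^2\bigr)^*+\tfrac{\mu}{2}\|\cdot\|_2^2+F_2$, and invoke the strict convexity of this sum together with \cref{prop6} --- in effect the Moreau--Yosida smoothing viewpoint, which makes transparent that the factor $\tfrac{1}{2\mu}\|\cdot\|_2^2$ alone is responsible for smoothness. The paper instead applies \cref{prop4} directly to the triple inf-convolution and obtains \cref{eqt:partial_S}: the subdifferential $\partial S_{\lambda,\mu}(x)$ is an intersection whose third factor is the singleton $\bigl\{\tfrac{1}{\mu}(x-v_{\lambda,\mu}-w_{\lambda,\mu})\bigr\}$, hence contains at most one element, and it is nonempty because $S_{\lambda,\mu}$ is finite-valued and convex. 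This buys the paper both differentiability and the gradient formula \cref{eqt:formula_p} in a single stroke, whereas your route forces a separate envelope-theorem/stationarity argument to recover $p_{\lambda,\mu}=\tfrac{1}{\mu}(x-v_{\lambda,\mu}-w_{\lambda,\mu})$ and the two inclusions; that extra step is sound (it is precisely what \cref{prop4} formalizes, and can be made rigorous by noting that $y\mapsto F_1(v_{\lambda,\mu})+\tfrac{\lambda}{2}\|v_{\lambda,\mu}\|_2^2+F_2^*(w_{\lambda,\mu})+\tfrac{1}{2\mu}\|y-v_{\lambda,\mu}-w_{\lambda,\mu}\|_2^2-S_{\lambda,\mu}(y)$ is nonnegative, differentiable, and vanishes at $y=x$), but it is an additional piece of bookkeeping the paper avoids. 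Your diagnosis of the key obstacle is also exactly right: the crude bound $\|p_{\lambda,\mu}\|_2\leq\sqrt{2C/\mu}$ coming from \cref{eqt:bdduv} blows up as $\mu\to 0^+$, and it is the uniform boundedness of $\partial F_1$ that saves the argument.
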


\begin{proof}
Rewriting the formula of $S_{\lambda,\mu}$ in \cref{eqt:P2}, we get
\begin{equation*}
    S_{\lambda,\mu} = \left(F_1 + \frac{\lambda}{2}\|\cdot\|_2^2\right) \conv F_2^* \conv \left(\frac{1}{2\mu}\|\cdot\|_2^2\right).
\end{equation*}
From straightforward computation, by \cref{prop4} and the definition of $(v_{\lambda,\mu},w_{\lambda,\mu})$ in \cref{eqt:P2}, we obtain
\begin{equation} \label{eqt:partial_S}
\begin{split}
\partial S_{\lambda,\mu}(x) &= \partial \left(F_1 + \frac{\lambda}{2}\|\cdot \|_2^2\right)(v_{\lambda,\mu}) \bigcap \partial F_2^*(w_{\lambda,\mu}) \bigcap \left\{\frac{1}{\mu}(x-v_{\lambda,\mu} - w_{\lambda,\mu}) \right\} \\
&= \left(\partial F_1(v_{\lambda,\mu}) + \lambda v_{\lambda,\mu}\right) \bigcap \partial F_2^*(w_{\lambda,\mu}) \bigcap \left\{\frac{1}{\mu}(x-v_{\lambda,\mu} - w_{\lambda,\mu}) \right\}.
\end{split}
\end{equation}
As a result, $\partial S_{\lambda,\mu}(x)$ contains at most one element. On the other hand, $S_{\lambda,\mu}$ is convex and finite-valued, which implies the subdifferential of $S_{\lambda,\mu}$ is non-empty.
Hence, $S_{\lambda,\mu}$ is differentiable and its gradient is given by
\begin{equation} \label{eqt:formula_p}
p_{\lambda,\mu} := \nabla S_{\lambda,\mu}(x)=
\frac{1}{\mu}(x-v_{\lambda,\mu} - w_{\lambda,\mu}).
\end{equation}

\update{Let $K$ be an arbitrary positive number. Now, we prove that there exists a constant $C$ such that $\|p_{\lambda,\mu}\|_2\leq C$ whenever $\lambda, \mu \in (0,K)$.
By \cref{eqt:partial_S,eqt:formula_p}, $p_{\lambda,\mu}$ is in the set $\partial F_1(v_{\lambda,\mu}) + \lambda v_{\lambda,\mu}$.
On the one hand, the subdifferential of the norm $F_1$ is always bounded. In other words, there exists a constant $C_1$ such that $\|s\|_2\leq C_1$ whenever $s\in \partial F_1(z)$ for some $z\in\R^n$. Then, we deduce that the set $\partial F_1(v_{\lambda,\mu})$ is bounded by $C_1$. On the other hand, according to \cref{lem:uniq_bdd}, there exists a constant $C_2$ such that $\|v_{\lambda,\mu}\|_2\leq C_2$ whenever $\lambda,\mu\in(0,K)$. Therefore,  $\{p_{\lambda,\mu}: \lambda, \mu\in(0,K)\}$ is bounded by $C_1+C_2 K$.
}

Let $p$ be a cluster point of $\{p_{\lambda,\mu}\}$. By taking a subsequence we can assume $\lambda_k$ and $\mu_k$ converge to zero and $p_k := p_{\lambda_k,\mu_k}$ converges to $p$. By \cref{lem:uniq_bdd}, $v_k := v_{\lambda_k,\mu_k}$ is bounded, hence we can assume $v_k$ converges to a point $u$ by taking a subsequence. Then, $w_k := w_{\lambda_k,\mu_k}$ converges to $x-u$ by \cref{lem:cluster_pt}. From \cref{eqt:partial_S}, we have
\begin{equation*}
    p_k\in \left(\partial F_1(v_k) + \lambda_k v_k\right) \cap \partial F_2^*(w_k).
\end{equation*}
Since the subdifferential operators $\partial F_1$ and $\partial F_2^*$ are continuous \cite[Prop.XI.4.1.1]{convex_analy_book2}, when $k$ goes to infinity, the above inclusion becomes
\begin{equation}\label{eqt:egp1}
    p\in \left(\partial F_1(u) + 0\cdot u\right) \cap \partial F_2^*(x-u) = \partial F_1(u) \cap \partial F_2^*(x-u).
\end{equation}
On the other hand, by \cref{prop4} and the definition of $S(x)$ and $U(x)$ in \cref{eqt:original_S}, we have
\begin{equation}\label{eqt:egp2}
    \partial S(x) = \partial F_1(\tilde{u}) \cap \partial F_2^*(x-\tilde{u}),
\end{equation}
for any $\tilde{u} \in U(x)$. Moreover, by \cref{lem:cluster_pt}, since $u$ is a cluster point of $v_k$, we can conclude that $u\in U(x)$. As a result, we can choose $\tilde{u}=u$ in \cref{eqt:egp2} and compare it with \cref{eqt:egp1} to conclude that $p\in \partial S(x)$.
\end{proof}

\begin{prop} \label{prop:conv_general}
Assume $\{\lambda_k\} \subset (0,+\infty)$ and $\{\mu_k\} \subset (0,+\infty)$  converge to zero and $\lim_{k\to +\infty} \frac{\lambda_k}{\mu_k} = c\in (0,+\infty)$. 
Then, the minimizer $u_k:=u_{\lambda_k,\mu_k}$ and the gradient $p_k:=\nabla S_{\lambda_k,\mu_k}(x)$ converge to the $l^2$ projections of zero onto the sets $U(x)$ and $\partial S(x)$, respectively. To be specific, 
\begin{equation*}
\lim_{k\to+\infty} u_k = \argmin_{u\in U(x)} \|u\|_2, \ \ \text{and }\ 
\lim_{k\to+\infty} p_k = \argmin_{p\in \partial S(x)} \|p\|_2.
\end{equation*}
\end{prop}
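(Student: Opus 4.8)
The plan is to prove both convergences at once by showing that along \emph{every} convergent subsequence the limits of $u_k$ and $p_k$ are forced to coincide with the $l^2$-projections of the origin onto $U(x)$ and $\partial S(x)$; since those projections are unique (each set being nonempty, closed and convex) and the two sequences are bounded, this pins down the full limits. The ingredients I need are already in place: by \cref{lem:uniq_bdd,lem:cluster_grad} the sequences $\{u_k\}$ and $\{p_k\}$ are bounded, by \cref{lem:cluster_pt} every cluster point of $u_k$ (equivalently of $v_k$, since $u_k - v_k = \mu_k p_k \to 0$) lies in $U(x)$, and by \cref{lem:cluster_grad} every cluster point of $p_k$ lies in $\partial S(x)$. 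So I may fix a subsequence along which $v_k \to \bar u \in U(x)$ and $p_k \to \bar p \in \partial S(x)$ simultaneously, and the whole problem reduces to identifying $\bar u$ and $\bar p$.

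The engine of the identification will be a single coupled inequality built from optimality and monotonicity. The optimality conditions for \cref{eqt:P2} give $p_{\lambda,\mu} - \lambda v_{\lambda,\mu} \in \partial F_1(v_{\lambda,\mu})$ and $p_{\lambda,\mu} \in \partial F_2^*(w_{\lambda,\mu})$, together with the identity $x - v_{\lambda,\mu} - w_{\lambda,\mu} = \mu p_{\lambda,\mu}$ from \cref{eqt:formula_p}. I would then fix an arbitrary $u \in U(x)$ and an arbitrary $q \in \partial S(x)$; by \cref{prop4} (as recorded in the proof of \cref{lem:cluster_grad}) one has $\partial S(x) = \partial F_1(u) \cap \partial F_2^*(x-u)$, so $q$ is at once a subgradient of $F_1$ at $u$ and of $F_2^*$ at $x-u$, and crucially the choices of $u$ and $q$ are \emph{independent}. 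Applying the monotonicity \cref{eqt:subdiff_monotone} of $\partial F_1$ to the pair $(v_{\lambda,\mu},u)$ and of $\partial F_2^*$ to the pair $(w_{\lambda,\mu}, x-u)$, adding the two inequalities, and using that the two arguments sum to $v_{\lambda,\mu}+w_{\lambda,\mu}-x = -\mu p_{\lambda,\mu}$, the cross terms collapse and, after dividing by $\mu_k>0$, I expect to reach
\begin{equation*}
\langle p_k - q, p_k\rangle + \frac{\lambda_k}{\mu_k}\langle v_k, v_k - u\rangle \leq 0 .
\end{equation*}
Passing to the limit along the chosen subsequence and invoking $\lambda_k/\mu_k \to c$ then gives the key relation
\begin{equation*}
\langle \bar p - q, \bar p\rangle + c\,\langle \bar u, \bar u - u\rangle \leq 0 \qquad \text{for all } u \in U(x),\ q \in \partial S(x).
\end{equation*}

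From this single relation both conclusions fall out by specialization. Taking $q = \bar p$ kills the first term and leaves $\langle \bar u, u - \bar u\rangle \geq 0$ for all $u\in U(x)$, which is exactly the variational characterization of the projection $\bar u = \argmin_{u\in U(x)}\|u\|_2$; taking instead $u = \bar u$ kills the second term and leaves $\langle \bar p, q - \bar p\rangle \geq 0$ for all $q\in\partial S(x)$, the characterization of $\bar p = \argmin_{p\in\partial S(x)}\|p\|_2$. Because each projection is unique, every subsequential limit equals it, and hence the full sequences $\{u_k\}$ and $\{p_k\}$ converge to the two projections.

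The hard part will be the derivation of the coupled inequality: one has to combine the two monotonicity estimates in exactly the right way so that the linear cross terms telescope through $x - v - w = \mu p$, and one has to exploit the decomposition $\partial S(x) = \partial F_1(u)\cap \partial F_2^*(x-u)$ to justify that $u$ and $q$ may be varied independently. The conceptual point I want to highlight in the writeup is the role of the hypothesis $c\in(0,+\infty)$: the finiteness and strict positivity of $c$ are precisely what keep \emph{both} terms of the limiting inequality alive, so that one relation simultaneously encodes the primal and the dual selection; were $c$ equal to $0$ or $+\infty$, one of the two terms would vanish or blow up and the corresponding minimal-norm selection would be lost.
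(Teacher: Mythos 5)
Your proof is correct and its engine coincides with the paper's: the optimality system $p_k-\lambda_k v_k\in\partial F_1(v_k)$, $p_k\in\partial F_2^*(w_k)$, $x-v_k-w_k=\mu_k p_k$, the two monotonicity inequalities whose cross terms telescope through $v_k+w_k-x=-\mu_k p_k$, and the division by $\mu_k$ with $\lambda_k/\mu_k\to c$ are exactly the steps of the paper's proof (which merely rewrites $p_k\in\partial F_2^*(w_k)$ as $w_k\in\partial F_2(p_k)$ via \cref{prop3} before invoking monotonicity, yielding the identical inequality); your coupled inequality does come out of the algebra precisely as you predict. The one genuine difference is the endgame. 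The paper commits from the start to the single comparison pair given by the two projections --- justified by the same independence you highlight, namely $\partial S(x)=\partial F_1(\tilde u)\cap\partial F_2^*(x-\tilde u)$ for every $\tilde u\in U(x)$ --- and closes with a strict-convexity (energy) sandwich for $H=\tfrac{1}{2}\|\cdot\|_2^2$: the limiting inequality is squeezed between differences of energies and zero, all inequalities collapse to equalities, and uniqueness of the minimizer of the strictly convex $H$ over each set identifies the limits. You instead keep the comparison pair $(u,q)$ arbitrary and decouple the limiting inequality via the specializations $q=\bar p$ and $u=\bar u$, which are admissible exactly because \cref{lem:cluster_pt,lem:cluster_grad} place the subsequential limits in $U(x)$ and $\partial S(x)$; this reads off the variational characterization of each projection with no energy argument at all. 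Your finish is marginally cleaner and proves a slightly stronger intermediate fact (the inequality for all admissible pairs); the paper's finish, by carrying a generic $\nabla H$ through the computation, makes transparent its remark that the proposition persists for any nonnegative, finite-valued, 1-coercive, differentiable, strictly convex regularizer $H$ --- a generalization your decoupling supports as well, with the $l^2$ projections replaced by the $H$-minimal elements of $U(x)$ and $\partial S(x)$.
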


\begin{proof}
\update{
Define $H(\cdot) := \|\cdot\|_2^2/2$. We will use the general symbol $H$ to replace the quadratic function because this proof holds for a general finite-valued, 1-coercive, differentiable and strictly convex function $H$.
}
Note that the limit of $u_k$ is the same as the limit of $v_k$, hence we just need to prove the result for $v_k$ and $p_k$. Denote 
\begin{equation}\label{eqt:pfprop54_defubarpbar}
 \bar{u} := \argmin_{u\in U(x)} \update{H(u)}, \quad \text{ and }\quad \bar{p} := \argmin_{p\in \partial S(x)} \update{H(p)}.
\end{equation}
Since $v_k$ and $p_k$ are bounded, we can assume that $v_k$ converges to $u$ and $p_k$ converges to $p$ by taking a subsequence. Then it suffices to prove $u=\bar{u}$, $p=\bar{p}$.

\update{
By \cref{eqt:partial_S} and \cref{eqt:formula_p}, we have
\begin{equation}\label{eqt:pfprop54_eqtpk}
    p_k \in \left(\partial F_1(v_k) + \lambda_k \nabla H(v_k)\right) \bigcap \partial F_2^*(w_k) \bigcap \left\{\nabla H^*\left(\frac{x-v_k - w_k}{\mu_k} \right)\right\}.
\end{equation}
By \cref{prop3}, we deduce that $w_k\in \partial F_2(p_k)$ and $x-v_k-w_k = \mu_k \nabla H(p_k)$.
Together with \cref{eqt:pfprop54_eqtpk}, we obtain}
\begin{equation}\label{eqt:subgradinclusion}
\begin{split}
    &\update{p_k - \lambda_k \nabla H(v_k) \in \partial F_1(v_k);}\\
    &\update{x - \mu_k \nabla H(p_k) - v_k = w_k \in \partial F_2(p_k).}
\end{split}
\end{equation}
On the other hand, since $\bar{u}$ and $\bar{p}$ are the minimizer and momentum of the original problem \cref{eqt:original_S}, we have
\begin{equation} \label{eqt:orginal}
\bar{p} \in \partial F_1(\bar{u}) \cap \partial F_2^*(x-\bar{u}).
\end{equation}
Combining \cref{eqt:subgradinclusion} and \cref{eqt:orginal}, we obtain
\begin{equation*}
\begin{cases}
p_k - \lambda_k \update{\nabla H(v_k)} \in \partial F_1(v_k);\ \ \text{and }\ \ \bar{p} \in \partial F_1(\bar{u}).\\
x-\mu_k \update{\nabla H(p_k)} - v_k \in \partial F_2(p_k);\ \ \text{and}\ \ x-\bar{u}\in \partial F_2(\bar{p}).
\end{cases}
\end{equation*}
Since the subdifferential operators $\partial F_1$ and $\partial F_2$ are monotone, by \cref{eqt:subdiff_monotone}, we obtain
\begin{equation*}
\begin{cases}
\langle p_k - \lambda_k \update{\nabla H(v_k)} - \bar{p}, v_k - \bar{u}\rangle \geq 0;\\
\langle x-\mu_k \update{\nabla H(p_k)} - v_k - (x-\bar{u}), p_k - \bar{p}\rangle \geq 0.
\end{cases}
\end{equation*}
We sum up the two inequalities to get
\begin{equation*}
\begin{split}
0\ &\geq -\langle p_k - \lambda_k \update{\nabla H(v_k)} - \bar{p}, v_k - \bar{u}\rangle -
\langle x-\mu_k \update{\nabla H(p_k)} - v_k - (x-\bar{u}), p_k - \bar{p}\rangle\\
&=\lambda_k\langle \update{\nabla H(v_k)}, v_k - \bar{u}\rangle + \mu_k \langle \update{\nabla H(p_k)}, p_k - \bar{p}\rangle.
\end{split}
\end{equation*}

\update{
We divide the above inequality by $\mu_k$ and take the limit $k\to+\infty$ to obtain 
\begin{equation} \label{eqt:pfprop54_ineqtup}
0\geq c\langle \nabla H(u), u - \bar{u}\rangle + \langle \nabla H(p), p - \bar{p}\rangle,
\end{equation}
where the positive constant $c$ is defined in the statement of this proposition to be $c:= \lim_{k\to+\infty}\lambda_k / \mu_k$.}
From \cref{lem:cluster_pt} and \cref{lem:cluster_grad}, we know that $u\in U(x)$ and $p\in \partial S(x)$, hence \update{we have $H(u)\geq H(\bar{u})$ and $H(p)\geq H(\bar{p})$ by \cref{eqt:pfprop54_defubarpbar}. Taken together with \cref{eqt:pfprop54_ineqtup},
we obtain
\begin{equation}\label{eqt:ineqt}
0\geq c(H(\bar{u}) - H(u)) + H(\bar{p}) - H(p)
\geq c\langle \nabla H(u), \bar{u}-u\rangle + \langle \nabla H(p), \bar{p} - p\rangle \geq 0.
\end{equation}

As a result, the inequalities in \cref{eqt:ineqt} become equalities, which implies $H(u)=H(\bar{u})$ and $H(p)=H(\bar{p})$ because $c$ is positive by assumption. Therefore, we conclude that $u=\bar{u}$ and $p=\bar{p}$, since the minimizers in \cref{eqt:pfprop54_ineqtup} are unique.}
\end{proof}

In practice, if a model has non-unique minimizers, then some existing optimization algorithms may fail to converge, in which case one may consider this modification procedure and perform the optimization algorithm to the modified problem to obtain a sequence converging to the selected minimizer. 
Here, for simplicity, we only demonstrate the method on a specific optimization problem whose objective function contains two parts \update{including one norm and one constraint}. In fact, this method works for more general cases, such as some other decomposition models with more degenerate parts. \update{Now, we give a numerical illustration for this proposed regularization method on the celebrated TVL1 model \cite{Alliney1997, Alliney1994Alg, aujol2006structure, Chan2005Aspects, Chan2006Alg, Duval2009TVL1, Nikolova2002Minimizers, Nikolova2004}.

\begin{table}[!ht]
     \begin{center}
     \begin{tabular}{  c | c | c | c | c}
     \hline
      &Example 1 & Example 2 & Example 3 & Example 4 \\ \hline
      \makecell{Original \\ Image}
      &
      \raisebox{-.5\height}{\includegraphics[width=0.18\textwidth]{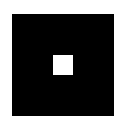}}
      & 
      \raisebox{-.5\height}{\includegraphics[width=0.18\textwidth]{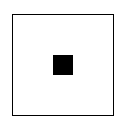}}
      &
      \raisebox{-.5\height}{\includegraphics[width=0.18\textwidth]{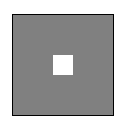}}
      & 
      \raisebox{-.5\height}{\includegraphics[width=0.18\textwidth]{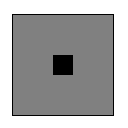}}
      \\ \hline
      \makecell{$v$ \\Component}
      &
      \raisebox{-.5\height}{\includegraphics[width=0.18\textwidth]{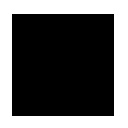}}
      & 
      \raisebox{-.5\height}{\includegraphics[width=0.18\textwidth]{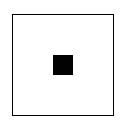}}
      &
      \raisebox{-.5\height}{\includegraphics[width=0.18\textwidth]{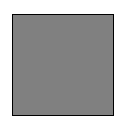}}
      & 
      \raisebox{-.5\height}{\includegraphics[width=0.18\textwidth]{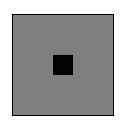}}
      \\ \hline
      \end{tabular}
      \caption{Numerical results of the TVL1 model with the proposed regularization method.}
      \label{tab:sec5_exp}
      \end{center}
      \end{table}
      
To be specific, the TVL1 model solves the following optimization problem
\begin{equation}\label{eqt:sec5_TVL1}
    U^{TVL1}(x) := \argmin_{u\in\Rn} \alpha \|u\|_{TV} + \|x-u\|_{1},
\end{equation}
where $\|\cdot\|_{TV}$ denotes the discrete total variation semi-norm defined in \cref{eqt:discreteTV}.
However, it is well-known that this minimization problem may have non-unique minimizers \cite{Chan2005Aspects, darbon2006image}. For instance, let $\Omega$ be the domain of an image and $\Omega_1$ be any small rectangle in $\Omega$ such that $2|\Omega_1| < |\Omega|$. Let $I$ be the set of indices whose corresponding pixels are in $\Omega_1$. Let $m_1, m_2$ be the numbers of pixels on the two adjacent sides of the small rectangle $\Omega_1$. In other words, there are $m_1m_2$ pixels in $\Omega_1$ and $2(m_1+m_2)$ pixels on the boundary of $\Omega_1$. Let $a$ and $b$ be two different real numbers in $[0,1]$ and set the discretized image $x$ as follows
\begin{equation*}
    x_{i,j} := \begin{cases}
    a, &\text{if }(i,j)\in I;\\
    b, &\text{if }(i,j)\not\in I.
    \end{cases}
\end{equation*}
Then, the minimizers of the TVL1 model \cref{eqt:sec5_TVL1} with $\alpha = (m_1m_2) / (2m_1+2m_2)$ are not unique.
Moreover, we have 
\begin{equation*}
U^{TVL1}(x) = \{\beta u_1 + (1-\beta) u_2:\ \beta\in[0,1]\},
\end{equation*}
where $u_1$ and $u_2$ are defined by
\begin{equation*}
    (u_1)_{i,j} := \begin{cases}
    \min\{a,b\}, &\text{if }(i,j)\in I;\\
    b, &\text{if }(i,j)\not\in I,
    \end{cases}
    \quad\text{ and }\quad 
    (u_2)_{i,j} := \begin{cases}
    \max\{a,b\}, &\text{if }(i,j)\in I;\\
    b, &\text{if }(i,j)\not\in I.
    \end{cases}
\end{equation*}
By applying the proposed regularization method, a unique minimizer is selected in this set of minimizers. To be specific, we solve the following problem
\begin{equation}\label{eqt:sec5_TVL1relaxed}
    \left(v^{TVL1}_{\lambda, \mu}(x), w^{TVL1}_{\lambda, \mu}(x)\right) := \argmin_{v,w\in\Rn} \alpha \|v\|_{TV} + \|w\|_{1} + \frac{\lambda}{2}\|v\|_2^2 + \frac{1}{2\mu}\|x-v-w\|_2^2.
\end{equation}
\updatenew{Note that the above model is related to models incorporating infinal convolution of $L^1$ and $L^2$ fidelity terms, which are used for mixed Gaussian and Salt \& Pepper noise image restoration, as proposed in \cite{Calatroni2017Infimal,Calatroni2019Analysis} for instance.}
Although this model is different from the example we give in \cref{eqt:original_S}, one can adjust the arguments to prove the same statements for this model. In other words, when the two parameters $\lambda$ and $\mu$ converge to zero in a comparable rate, the $v$-component $v^{TVL1}_{\lambda, \mu}(x)$ converges to the element $\bar{u}^{TVL1}(x)$ defined by
\begin{equation*}
    \bar{u}^{TVL1}(x) := \argmin_{u\in U^{TVL1}(x)} \|u\|_2 = u_1,
\end{equation*}
and the $w$-component converges to the residual $x-u_1$.
Numerically, we use a splitting method and the algorithm in \cite{chambolle.09.ijcv,darbon2006image,hochbaum.01.jacm} to solve the minimizer in \cref{eqt:sec5_TVL1relaxed} when $\lambda = \mu = 0.01$. We test the regularization method on the four images shown in the first row in \cref{tab:sec5_exp}, and the corresponding $v$-components are shown in the second row.
}

\section{Conclusion}
\label{sec:conclusion}
In this paper, we provide connections between multi-time Hamilton-Jacobi equations and some optimization problems such as the decomposition models in image processing. To be specific, we show a representation formula for the minimizers $u_j$ and clarify the connection between the minimizers $u_j$ and the spatial gradient $p$ of the minimal values. Moreover, we also study the variational behaviors of the momentum $p$ and the velocities $\frac{u_j}{t_j}$. It turns out that their limits solve two optimization problems which are dual to each other. In addition, we provide a new perspective from convex analysis to prove the uniqueness of the convex solution to the multi-time Hamilton-Jacobi equation, taking advantage of the convexity assumptions to overcome the difficulty that the functions can take the value $+\infty$. At last, we demonstrate a regularization method to modify the decomposition models which have non-unique minimizers.

In this work, we consider the optimization problems which can be written in the form of Lax formula \cref{eqt:lax}. Hence, we assume the observed data $x$ is the summation of different components $\{u_j\}$. We do not consider non-additive perturbation models such as \cite{Auslender:2006:IGP:1113209.1113255, Eckstein:1993:NPP:2781820.2781832, Teboulle:1997:CPA:588889.589048}. However, our analysis actually covers a wide range of decomposition models with additive noise and the results can be easily extended to vector-valued images such as color images.

\bibliographystyle{siam}
\bibliography{references}

\end{document}